\providecommand{\corollaryname}{Corollary}
\providecommand{\definitionname}{Definition}
\providecommand{\propositionname}{Proposition}
\providecommand{\remarkname}{Remark}
\providecommand{\theoremname}{Theorem}
\theoremstyle{plain}
\newtheorem{thm}{\protect\theoremname}
\declaretheorem[name=Lemma,sibling=thm]{lem}
\theoremstyle{definition}
\newtheorem{defn}[thm]{\protect\definitionname}
\theoremstyle{plain}
\newtheorem{cor}[thm]{\protect\corollaryname}
\theoremstyle{plain}
\theoremstyle{remark}
\newtheorem*{rem*}{\protect\remarkname}
\global\long\def\R{\mathbb{R}}%
\global\long\def\boldVar#1{\mathbf{#1}}%
\global\long\def\mvar#1{\boldVar{#1}}%
\global\long\def\vvar#1{\vec{#1}}%
\global\long\def\defeq{\stackrel{\mathrm{{\scriptscriptstyle def}}}{=}}%
\global\long\def\otilde{\widetilde{O}}%
\global\long\def\gradient{\nabla}%
\global\long\def\grad{\gradient}%
\global\long\def\Hessian{\gradient^{2}}%
\global\long\def\hess{\Hessian}%
\global\long\def\indicVec#1{\onesVec_{#1}}%
\global\long\def\innerProduct#1#2{\left\langle#1 , #2 \right\rangle}%
\global\long\def\normFull#1{\left\Vert #1\right\Vert }%
\global\long\def\normm#1{\left\Vert #1\right\Vert_{\mm} }%
\global\long\def\normms#1{\Vert #1\Vert_{\mm} }%
\global\long\def\normmd#1{\left\Vert #1\right\Vert_{\mmd} }
\global\long\def\onesVec{\vvar 1}%
\global\long\def\ma{\mvar A}%
\global\long\def\md{\mvar D}%
\global\long\def\mg{\mvar G}%
\global\long\def\mh{\mvar H}%
\global\long\def\tmh{\tilde{\mvar H}}%
\global\long\def\mI{\mvar I}%
\global\long\def\tmi{\tilde{\mvar I}}%
\global\long\def\mm{\mvar M}%
\global\long\def\mmh{\mvar M^{1/2}}%
\global\long\def\mmd{\mvar M^\dagger}%
\global\long\def\mmdh{\mvar M^{\dagger/2}}%
\global\long\def\mU{\mvar U}%
\global\long\def\mha{\hat{\mvar A}}%
\global\long\def\mdiag{\mvar{diag}}%
\global\long\def\oracle{\mathcal{O}}%
\DeclareMathOperator*{\argmin}{arg\,min} %
\DeclareMathOperator*{\minimize}{minimize} %
\global\long\def\oracleBall{\oracle_{\textup{ball}}}%
\global\long\def\oracleLoc{\oracle_{\textup{local}}}%
\global\long\def\bx{\bar{x}}%
\global\long\def\ballmin{x_{\bx,r}}%
\global\long\def\eps{\epsilon}%
\global\long\def\half{\frac{1}{2}}%
\global\long\def\tztl{\tilde{z}_{t_{\lambda}}}%
\global\long\def\ztl{z_{t_{\lambda}}}%
\global\long\def\ytl{y_{t_{\lambda}}}%
\global\long\def\quadmin{x_{g,\mh}}%
\global\long\def\tx{\tilde{x}}%
\global\long\def\hx{\hat{x}}%
\global\long\def\xset{\mathcal{X}}%
\newcommand{\N}{\mathbb{N}}
\DeclarePairedDelimiter{\abs}{\lvert}{\rvert} 
\DeclarePairedDelimiter{\crl}{\{}{\}}
\DeclarePairedDelimiter{\prn}{(}{)}
\DeclarePairedDelimiter{\norm}{\|}{\|}
\DeclarePairedDelimiter{\tri}{\langle}{\rangle}
\DeclarePairedDelimiter{\ceil}{\lceil}{\rceil}
\DeclarePairedDelimiter{\floor}{\lfloor}{\rfloor}
\newcommand{\inner}[2]{\tri{#1,#2}}
\newcommand{\ball}[1]{\mathcal{B}_{#1}}
\newcommand{\ballset}{\ball{r}(\bx)}
\newcommand{\ballsett}[1]{\ball{r}(#1)}
\newcommand{\domain}{\ball{R}(0)}
\newcommand{\lse}{\textup{lse}}
\renewcommand{\P}{\mathbb{P}}
\newcommand{\indic}[1]{1\crl{#1}}
\newcommand{\ind}[1]{^{(#1)}}
\newcommand{\ones}{\boldsymbol{1}}
\newcommand{\omsf}{\oracle_{\textup{MS}}}
\newcommand{\lam}{\lambda}
\newcommand{\tg}{\tilde{g}}
\newcommand{\hg}{\hat{g}}
\definecolor{burntorange}{rgb}{0.8, 0.33, 0.0}
\definecolor{darkgreen}{rgb}{0.0, 0.5, 0.0}
\definecolor{crayola}{rgb}{0.89, 0.043, 0.361}
\definecolor{burgundy}{rgb}{0.5, 0.0, 0.13}
\definecolor{purple}{rgb}{1, 0, 1}
\begin{document}

\pagenumbering{gobble}
\title{Acceleration with a Ball Optimization Oracle}
\author{Yair Carmon\thanks{Stanford University, \texttt{\{yairc,jmblpati,%
qjiang2,yujiajin,sidford,kjtian\}@stanford.edu}.}\\
\and 
Arun Jambulapati\footnotemark[1]\\
\and 
Qijia Jiang\footnotemark[1]\\
\and
Yujia Jin\footnotemark[1]\\
\and
Yin Tat Lee\thanks{University of Washington, \texttt{yintat@uw.edu}.}\\
\and
Aaron Sidford\footnotemark[1]\\
\and
Kevin Tian\footnotemark[1]\\
}
\date{}
\maketitle
\begin{abstract}
Consider an oracle which takes a point $x$ and returns the minimizer of a 
convex function $f$ in an $\ell_2$ ball of radius $r$ around $x$. It is 
straightforward to show that roughly $r^{-1}\log\frac{1}{\epsilon}$ calls to 
the oracle suffice to find an $\epsilon$-approximate minimizer of $f$ in an 
$\ell_2$ unit ball. Perhaps surprisingly, this is not optimal: we design an 
accelerated algorithm which attains an $\epsilon$-approximate minimizer 
with roughly $r^{-2/3} \log \frac{1}{\epsilon}$ oracle queries, and give a 
matching lower bound. Further, we implement ball optimization oracles for 
functions with locally stable Hessians using a variant of Newton's method. 
The resulting algorithm applies to a number of problems of practical and 
theoretical import, improving upon previous results for logistic and 
$\ell_\infty$ regression and achieving guarantees comparable to the  
state-of-the-art for $\ell_p$ regression.
\end{abstract}

\pagenumbering{arabic}
\setcounter{page}{1}

\section{Introduction}
\label{sec:intro}

We study unconstrained minimization of a smooth convex objective 
$f:\R^d\to\R$, which we access through a \emph{ball optimization oracle} 
$\oracleBall$, that when 
queried at any point $x$, returns the minimizer\footnote{In the 
introduction we discuss exact 
oracles for simplicity, but 
our results account for inexactness.} of $f$ restricted a ball
of 
radius $r$ around $x$, i.e.,
\[
\oracleBall(x) = \argmin_{x'~\text{s.t.}~\norm{x' - x} \leq r} f(x').
\]
Such oracles 
underlie trust region 
methods~\cite{conn2000trust} 
and, as we 
demonstrate via applications, encapsulate problems with local stability.
 Iterating $x_{k +1} \gets \oracleBall(x_k)$ minimizes $f$ in $\otilde(R/r)$ iterations 
(see~\Cref{sec:vanilla}), where $R$ is the initial distance to the 
minimizer, $x^*$, and $\otilde(\cdot)$ hides polylogarithmic 
factors in problem parameters, including the desired accuracy. 

Given the fundamental geometric nature of the ball optimization 
abstraction, the central question motivating our work is whether it is 
possible to improve upon this $\otilde(R/r)$ query complexity. It is natural 
to conjecture that the answer is negative: we require $R/r$ oracle calls to 
observe the entire line from $x_0$ to the optimum, and therefore finding a 
solution using less queries would require jumping into completely 
unobserved regions.
Nevertheless, we prove that the optimal query complexity scales as 
$(R/r)^{2/3}$. This result has positive 
implications for the complexity for several key regression tasks, for which we 
can efficiently implement the ball optimization oracles.

\subsection{Our contributions}

Here we overview the main contributions of our paper: accelerating
ball optimization oracles (with a matching lower bound), implementing 
them under Hessian stability, and 
applying the resulting techniques to regression problems.
\newcommand{\oracleMS}{\oracle_{\textup{MS}}}
\paragraph{Monteiro-Svaiter (MS) oracles via ball optimization.}
Our starting point is an acceleration framework due 
to~\citet{MonteiroS13a}. It relies on access to an oracle that when 
queried with $x,v\in\R^d$ and $A>0$, returns points $x_+,y\in\R^d$ and 
$\lambda>0$ such that
\begin{flalign}
y &= \frac{A}{A+a_\lambda} x + \frac{a_\lambda}{A+a_\lambda} 
v,~\mbox{and}
\label{eq:ms-y} \\
x_+ &\approx \argmin_{x'\in\R^d} \crl*{f(x') + 
\frac{1}{2\lambda}\norm{x'-y}^2},\label{eq:ms-prox}
\end{flalign}
where $a_\lambda = \frac{1}{2}(\lambda +  \sqrt{\lambda^2 +4 
A\lambda})$. Basic calculus shows that for any $z$, the radius-$r$ oracle 
response $\oracleBall(z)$ solves 
the 
proximal point problem~\eqref{eq:ms-prox} for $y=z$ and some  
$\lambda = \lambda_r(z)\ge 0$ which depends on $r$ and $z$. 
Therefore, to implement the MS oracle  
with a ball optimization oracle, we need to find $\lambda$ that solves the 
implicit 
equation $\lambda = \lambda_r(y(\lambda))$, with $y(\lambda)$ as 
in~\eqref{eq:ms-y}. We accomplish an approximate version of this via 
 binary search over $\lambda$, resulting in an accelerated scheme 
that makes $\otilde(1)$ queries to $\oracleBall(\cdot)$ per iteration (each 
iteration also requires a gradient evaluation).

The main challenge lies in proving that our MS oracle implementation 
guarantees rapid convergence. We do so by a careful analysis which relates 
convergence to the distance between the points $y$ and $x_+$ that the MS 
oracle outputs. Specifically, letting $\{y_k,x_{k+1}\}$ be the sequence of 
these points, we prove that
\begin{equation*}
\frac{f(x_{K})-f(x^*) }{f(x_0)-f(x^*)}\le \exp\crl*{-\Omega(K) \min_{k < K} 
\frac{\norm{x_{k+1}-y_k}^{2/3}}{R^{2/3}}}.
\end{equation*}
Since $\oracleBall$ guarantees $\norm{x_{k+1}-y_k} = r$ for all $k$ 
except possibly the last (if the final ball contains $x_*$), our result follows.

\paragraph{Matching lower bound.}
We give a distribution over functions with domain of size $R$ for which any 
algorithm interacting with a ball optimization oracle of radius $r$ requires 
$\Omega((R/r)^{2/3})$ queries to find an approximate solution with $O(r^{1/3})$ additive error. %
Our lower  bound in fact holds for an even more powerful $r$-local oracle, 
which reveals all values of $f$ in a ball of radius $r$ around the query 
point. We prove our 
lower bounds using well-established techniques and Nemirovski's 
function, a canonical hard instance in convex 
optimization~\cite{nemirovski1983problem,woodworth2017lower,
	carmon2019lower_i,diakonikolas2018lower,bubeck2019complexity}. 
	Here, our primary contribution is to show that
appropriately scaling this construction makes it hard even against 
$r$-local oracles with a fixed radius $r$, as opposed to the more standard 
notion of local oracles that reveal the instance only in an arbitrarily small 
neighborhood around the query. 

\paragraph{Implementation of a ball optimization oracle.}
Trust region methods \cite{conn2000trust}
 solve a sequence of subproblems of the form
\[
\minimize_{\delta\in\R^d~\text{s.t.}~\norm{\delta}\le r} \crl[\Big]{ 
\delta^\top g + 
\half 
\delta^\top \mh \delta}.
\]
When $g=\grad f(x)$ and $\mh=\hess f(x)$, the trust region subproblem 
minimizes a second-order Taylor expansion of $f$ around $x$, 
implementing an approximate ball optimization oracle. We show how to implement a ball optimization oracle for $f$ %
to high accuracy for functions satisfying a local Hessian stability 
property. Specifically, we use a notion of Hessian stability similar to that 
of~\citet{karimireddy2018global}, requiring $\frac{1}{c} \hess f(x) \preceq 
\hess f(y) 
\preceq c \hess f(x)$ for every $y$ in a ball of radius $r$ around $x$ for some $c>1$. We analyze Nesterov's accelerated gradient method in a  Euclidean norm weighted by the Hessian at $x$, which we can also view as accelerated Newton steps, and show that it 
implements the oracle in $\otilde(c)$ linear system solutions. Here 
acceleration improves upon the $c^2$ dependence of more naive methods. 
This improvement is not necessary for our applications where we take 
$c$ to be a constant, but we include it for completeness.

\paragraph{Applications.}
We apply our implementation and acceleration of ball optimization oracles  
to problems of the 
form $f(\ma x -b)$ for data matrix $\ma\in\R^{n\times d}$. For logistic 
regression, where 
\[
f(z)=\sum_{i\in[n]}\log(1+e^{-z_i}),
\]
the Hessian stability property~\cite{agarwal2017second} implies that our algorithm solves the problem with $\otilde(\norm{x_0-x^*}_{\ma^\top \ma}^{2/3})$ linear system solves of the form $\ma^\top \md \ma x = z$ for diagonal matrix $\md$. This improves upon the previous best linearly-convergent
condition-free algorithm due to~\citet{karimireddy2018global}, which requires 
$\otilde(\norm{x_0-x^*}_{\ma^\top \ma})$ system solves. Our  
improvement is precisely 
the power $2/3$ factor that comes from acceleration using the ball optimization
oracle.

For $\ell_\infty$ regression, we take $f$ to be the log-sum-exp (softmax) 
function 
and establish that it too has a stable Hessian. By appropriately scaling softmax to  approximate $\ell_\infty$ to $\epsilon$ additive error and taking $r=\epsilon$, we obtain an algorithm that solves $\ell_\infty$ to additive error $\epsilon$ in $\otilde(\norm{x_0-x^*}_{\ma^\top 
\ma}^{2/3}\epsilon^{-2/3})$ linear system solves of the same form as 
above. This improves upon the algorithm of~\citet{bullins2019higher} 
which 
requires $\otilde(\norm{x_0-x^*}_{\ma^\top 
\ma}^{4/5}\epsilon^{-4/5})$ linear system solves.

Finally, we leverage our implementation of a ball optimization oracle to 
obtain high accuracy solutions to $\ell_p$ norm regression, where 
$f(z)=\sum_{i\in[n]} |z_i|^p$. Here, we use our accelerated 
ball-constrained Newton algorithm to minimize a sequence of proximal 
problems with a geometrically shrinking quadratic regularization term. 
The result is an algorithm that solves  $\otilde(\mathsf{poly}(p) n^{1/3})$ 
linear systems.  For $p = \omega(1)$, this matches the state-of-the-art 
$n$ dependence~\cite{AS20}
but obtains a worse dependence on $p$. Nevertheless, our approach seems simpler than prior work and leaves room for further refinements which we believe will result in stronger guarantees.

\subsection{Related work}\label{sec:related}
Our developments are rooted in three lines of work, which we now briefly 
survey.

\paragraph{Monteiro-Svaiter framework instantiations.}
\citet{MonteiroS13a} propose a new acceleration framework, which they 
specialize to recover the classic fast gradient method~\cite{Nesterov83} 
and obtain an optimal accelerated second-order method for convex 
problems with Lipschitz Hessian. Subsequent
work~\cite{GasnikovDGVSU0W19} extends this to functions with $p$th-order Lipschitz derivatives and a $p$th-order oracle. Generalizing further, \citet{bubeck2019complexity} implement the MS 
oracle via a ``$\Phi$ prox'' oracle that given query $x$ returns roughly 
$\argmin_{x'}\{ f(x) + \Phi(\norm{x'-x})\}$, for continuously differentiable $\Phi$, and prove an error bound scaling 
with the iterate number $k$ as $\phi(R/k^{3/2})R^2/k^2$, where 
$\phi(t) = \Phi'(t)/t$. 
Using $\mathsf{poly}(d)$ parallel queries to a 
subgradient oracle for non-smooth $f$, they show how to implement the 
$\Phi$ prox oracle for $\Phi(t)\propto (t/r)^p$ with arbitrarily large $p$, 
where $r=\epsilon/\sqrt{d}$. 
Our notion of a ball optimization corresponds to taking $p=\infty$, i.e., 
letting $\Phi$ be the indicator of $[0,r]$. However, since such $\Phi$ is not 
continuous, our result does not follow directly 
from~\cite{bubeck2019complexity}. Thus, our approach clarifies 
the limiting behavior of MS acceleration of infinitely smooth functions.

\paragraph{Trust region methods.}
The idea of approximately minimizing the objective in a ``trust 
region'' around the current iterate plays a central role in nonlinear 
optimization and 
machine learning~\citep[see, e.g.,][]{conn2000trust,LinWK08,SchmidtKS11}. 
Typically, the approximation takes the form of a second-order Taylor 
expansion, where regularity of the Hessian is key for guaranteeing the 
approximation quality. Of particular relevance to us is the work of 
\citet{karimireddy2018global}, which define a notion of Hessian stability 
under which a 
trust region method converges linearly with only logarithmic 
dependence on problem conditioning. 
 We observe that this stability 
condition in fact renders the second-order trust region approximation 
highly effective, so that a few iterations suffice in order to implement an 
``ideal'' ball optimization oracle, thus enabling accelerated condition-free 
convergence.

\paragraph{Efficient $\ell_p$  regression algorithms.}
There has been rapid recent progress in linearly convergent algorithms for minimizing the 
$p$-norm of the regression residual $\ma x-b$ or alternatively for finding 
a minimum $p$-norm $x$ satisfying the linear constraints $\ma x = b$. 
\citet{BubeckCLL18} give faster algorithms for all $p \in (1, 2) \cup (2, 
\infty)$, discovering and overcoming a limitation of classic interior point 
methods. 
Their algorithm is based on considering a ``smoother'' objective which 
behaves as a quadratic within a region, and as the original $p$th-order 
objective outside. \citet{AdilKPS19} improve on this result with an 
algorithm with iteration complexity bounded by $n^{1/3}$ (for 
regression in $n$ constraints) for all $p$ bounded away from $1$ and 
$\infty$, improving upon the $n^{1/2}$ limit behavior of 
\citet{BubeckCLL18}. \citet{AS20} provide an alternative method which 
achieves $n^{1/3}$ iterations with a linear dependence on $p$, improving 
on the $O(p^{O(p)})$ dependence found in \citet{AdilKPS19}. 
For $p=\infty$, \citet{bullins2019higher} develop a method based on 
fourth-order MS 
acceleration for $\eps$-approximately minimizing the smooth softmax 
approximation to the $\ell_\infty$ objective, with iteration complexity 
$\eps^{-4/5}$. 
We believe that our approach brings us closer to a unified perspective on high-order smoothness and acceleration for regression problems. 

\subsection{Paper organization}
In \Cref{sec:accelerated_ball_framework}, we implement the MS oracle 
using a ball optimization oracle and prove its $\otilde((R/r)^{2/3})$ 
convergence guarantee. In \Cref{sec:stable_Hessian}, we show how to use 
Hessian stability to efficiently implement a ball optimization oracle, and 
also show that quasi-self-concordance implies Hessian stability. In 
\Cref{sec:applications} we apply our developments to the aforementioned 
regression tasks. Finally, in \Cref{sec:lb} we give a lower bound implying our 
oracle complexity is optimal (up to logarithmic 
terms).

\paragraph{Notation.}
Let $\mm$ be a positive semidefinite matrix, and let $\mmd$ be its 
pseudoinverse. We perform our analysis in the Euclidean seminorm 
$\norm{x}_{\mm} \defeq \sqrt{x^\top \mm x}$; we will choose a specific 
$\mm$ when discussing applications. We denote the $\norm{\cdot}_{\mm}$ 
ball of radius $r$ around $\bx$ by
\[
\ballset\defeq\left\{ x\in\R^d\mid\normm{x-\bx}\le r\right\} .
\]
We recall standard definitions of smoothness and strong-convexity in a quadratic norm: differentiable $f:\R^d \rightarrow \R$ is $L$-smooth in $\normm{\cdot}$ if its gradient is $L$-Lipschitz in $\normm{\cdot}$, and twice-differentiable $f$ is $L$-smooth and $\mu$-strongly convex in $\normm{\cdot}$ if $\mu \mm \preceq \nabla^2 f(x) \preceq L\mm$ for all $x \in \R^d$.

\section{Monteiro-Svaiter Acceleration with a Ball Optimization Oracle}
\label{sec:accelerated_ball_framework}

In this section, we give an accelerated algorithm for optimization with the 
following oracle.
\begin{defn}[Ball optimization oracle]
	\label{def:ball_opt}  We call $\oracleBall$ a \emph{$(\delta,r)$-ball
	optimization oracle} for $f:\R^{d}\rightarrow\R$ if
	for any $\bar{x}\in\R^{d}$, it 
	outputs $y=\oracleBall(\bx)\in\ballset$ such that 
	$\norm{y-x_{\bx,r}}_\mm\le \delta$  for some 
	$x_{\bx,r}\in\argmin_{x\in\ballset}f(x)$.
\end{defn}

Our algorithm utilizes the acceleration framework of 
\citet{MonteiroS13a} (see also  \citep{GasnikovDGVSU0W19, 
bubeck2019complexity}). It relies on the 
following oracle.
\begin{defn}[MS oracle]\label{def:ms-oracle}
We call $\omsf$ a $\sigma$-MS oracle for differentiable $f:\R^{d}\rightarrow\R$ if given inputs 
$(A, x, v) \in \R_{\ge 0} \times \R^d \times \R^d$, $\omsf$ outputs 
$(\lambda, a_\lambda, y_{t_\lambda}, z) \in \R_{\ge 0}\times  \R_{\ge 0}
\times \R^{d}\times \R^{d}$ 
such that
\begin{flalign*}
&a_{\lambda} = \frac{\lambda + \sqrt{\lambda^2 + 4\lambda A}}{2},~
t_{\lambda} = \frac{A}{A+a_\lambda},~
 y_{t_\lambda} = t_\lambda \cdot x + (1-t_\lambda)\cdot v,
\end{flalign*}
and we have the guarantee 
\begin{equation}\label{eq:ms_requirement}\normm{z - (\ytl - \lambda 
\mmd \nabla f(z))} \le \sigma\normm{z - \ytl}.\end{equation}
\end{defn}
We now state the acceleration framework and the main bound we use to 
analyze its convergence.

\begin{algorithm}
\begin{algorithmic}[1]
\caption{Monteiro-Svaiter acceleration}
\label{alg:msaccel_framework}
\STATE{\textbf{Input: }Strictly convex and differentiable function 
$f:\R^{d}\rightarrow\R$. }%
\STATE{\textbf{Input: }Symmetric $\mm\succeq 0$ 
with $\nabla f(x) \in \text{Im}(\mm)$ for all $x \in \R^d$.}
\STATE{\textbf{Input: }Initialization $A_{0}\ge0$ and 
$x_{0}=v_{0}\in\R^{d}$.}
\STATE{\textbf{Input: }Monteiro-Svaiter oracle $\omsf$ with parameter 
$\sigma\in[0,1)$.}
\FOR{$k=0, 1,2,\ldots$}
\STATE{$(\lambda_{k + 1}, a_{k + 1}, y_k, x_{k + 1}) \gets \omsf(A_k, x_k, v_k)$}
\STATE{$v_{k+1} \gets v_{k}-a_{k+1}\mmd\grad f(x_{k+1})$.}%
\STATE{$A_{k + 1} \gets A_k + a_{k + 1}$}
\ENDFOR
\end{algorithmic}
\end{algorithm}
\begin{restatable}{prop}{convergencerateaccel}
	\label{prop:ball_constrained_error_bound} 
	Let $f$ be strictly convex and differentiable, with $\normm{x_0 - x^*} 
	\le R$ and $f(x_0) - f(x^*) \le \eps_0$. Set $A_0 = R^2/(2\epsilon_0)$ 
	and suppose that  for some $r>0$ the iterates of 
	\Cref{alg:msaccel_framework} satisfy 
	\[\normm{x_{k+1}-y_{k}}\geq r~\mbox{for all}~k\ge0.\]
	Then, the iterates also satisfy
	\[
	f(x_k) - f(x^*)\leq2\eps_0\exp\left(-\left(\frac{r(1-\sigma)}{R}\right)^{2/3}(k-1)\right)\cdot
	\]
\end{restatable}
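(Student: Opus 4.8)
The plan is to track the Monteiro--Svaiter potential $\Phi_{k}:=A_{k}(f(x_{k})-f(x^{*}))+\tfrac12\normm{v_{k}-x^{*}}^{2}$. Since $f(x_{k})-f(x^{*})\le\Phi_{k}/A_{k}$, a linear rate will follow once we know $A_{k}$ grows geometrically, and that in turn will follow from a strong enough per-step decrease of $\Phi_{k}$. Crucially, I intend to extract this decrease only from convexity and the $\sigma$-MS guarantee, not from any smoothness of $f$.

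First I would establish the per-iteration estimate. Expanding $\Phi_{k+1}-\Phi_{k}$ with the update $v_{k+1}=v_{k}-a_{k+1}\mmd\grad f(x_{k+1})$, the convexity inequalities $f(x_{k+1})-f(x_{k})\le\langle\grad f(x_{k+1}),x_{k+1}-x_{k}\rangle$ and $f(x_{k+1})-f(x^{*})\le\langle\grad f(x_{k+1}),x_{k+1}-x^{*}\rangle$, and the identity $A_{k+1}y_{k}=A_{k}x_{k}+a_{k+1}v_{k}$, the terms linear in $v_{k}$ cancel and one is left with $\Phi_{k+1}-\Phi_{k}\le A_{k+1}\big(\langle\grad f(x_{k+1}),x_{k+1}-y_{k}\rangle+\tfrac{\lambda_{k+1}}{2}\normm{\mmd\grad f(x_{k+1})}^{2}\big)$, using $a_{k+1}^{2}=\lambda_{k+1}A_{k+1}$. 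Squaring the guarantee \eqref{eq:ms_requirement} and rearranging bounds the bracket by $-(1-\sigma^{2})\normm{x_{k+1}-y_{k}}^{2}/(2\lambda_{k+1})$; combining with $A_{k+1}/\lambda_{k+1}=A_{k+1}^{2}/a_{k+1}^{2}=(1-u_{k})^{-2}$ for $u_{k}:=A_{k}/A_{k+1}\in(0,1)$, with $\normm{x_{k+1}-y_{k}}\ge r$, and with $1-\sigma^{2}\ge(1-\sigma)^{2}$ gives
\[
\Phi_{k}-\Phi_{k+1}\ \ge\ \frac{(1-\sigma)^{2}r^{2}}{2(1-u_{k})^{2}}.
\]
Summing over $k=0,\dots,K-1$, using $\Phi_{K}\ge0$ and $\Phi_{0}\le A_{0}\eps_{0}+\tfrac12R^{2}=R^{2}$ (from $f(x_{0})-f(x^{*})\le\eps_{0}$ and $v_{0}=x_{0}$), and writing $P:=\Phi_{0}-\Phi_{K}\in(0,R^{2}]$ yields $\sum_{k<K}(1-u_{k})^{-2}\le 2P/((1-\sigma)^{2}r^{2})$. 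H\"older's inequality with exponents $3/2$ and $3$ gives $K\le\big(\sum_{k<K}(1-u_{k})\big)^{2/3}\big(\sum_{k<K}(1-u_{k})^{-2}\big)^{1/3}$, so $\sum_{k<K}(1-u_{k})\ge K^{3/2}(1-\sigma)r/\sqrt{2P}$; since $\log(A_{K}/A_{0})=\sum_{k<K}\log(1/u_{k})\ge\sum_{k<K}(1-u_{k})$, combining with $\Phi_{K}=\Phi_{0}-P\le R^{2}-P$ and $A_{0}=R^{2}/(2\eps_{0})$ inside $f(x_{K})-f(x^{*})\le\Phi_{K}/A_{K}$ gives
\[
f(x_{K})-f(x^{*})\ \le\ 2\eps_{0}\,(1-P/R^{2})\exp\!\Big(-\tfrac{K^{3/2}(1-\sigma)r}{\sqrt{2P}}\Big).
\]

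Since $P$ depends on the run, the last step is to bound the right side by its maximum over $P\in(0,R^{2}]$, i.e.\ to lower bound $h(P):=K^{3/2}(1-\sigma)r/\sqrt{2P}-\log(1-P/R^{2})$. The function $h$ tends to $+\infty$ at both endpoints, $h'$ is strictly increasing, and so $h$ has a unique interior minimizer $P^{*}$. Writing $q:=P^{*}/R^{2}$, the stationarity condition collapses to $c'K=2q/(1-q)^{2/3}$ with $c':=(r(1-\sigma)/R)^{2/3}$, and $h(P^{*})$ simplifies to $2q/(1-q)-\log(1-q)$. The target then follows from $(1-q)^{-1}\ge(1-q)^{-2/3}$ and $-\log(1-q)\ge0$, which give $h(P^{*})\ge 2q/(1-q)^{2/3}=c'K\ge c'(K-1)$, hence $f(x_{K})-f(x^{*})\le 2\eps_{0}e^{-c'(K-1)}$. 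The cases $K\le1$ are immediate from $\Phi_{K}\le\Phi_{0}\le R^{2}$ and $A_{K}\ge A_{0}$. I expect this final optimization --- finding the change of variables $q=P^{*}/R^{2}$ that puts the stationarity condition and $h(P^{*})$ into matching algebraic form, so that the power-$2/3$ exponent appears --- to be the main obstacle; the potential decrease bound and the H\"older step are routine.
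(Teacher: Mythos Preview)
Your argument is correct but takes a genuinely different route from the paper. Both proofs begin with the same per-step potential inequality $\Phi_{k}-\Phi_{k+1}\ge\tfrac{(1-\sigma^{2})A_{k+1}}{2\lambda_{k+1}}\normm{x_{k+1}-y_{k}}^{2}$; the divergence is in how each extracts the $2/3$ exponent. The paper telescopes $\sqrt{A_{k}}-\sqrt{A_{k-1}}\ge\tfrac12\sqrt{\lambda_{k}}$, applies a \emph{reverse} H\"older inequality with exponent $3/2$ to obtain the self-referential recursion $A_{k}^{1/3}\ge\alpha\sum_{i\le k}A_{i}^{1/3}$ with $\alpha=\Theta(c')$, and then invokes an auxiliary growth lemma (a nonnegative sequence satisfying $B_{k}\ge\alpha\sum_{i\le k}B_{i}$ grows like $e^{\alpha k}$) to conclude. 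You instead telescope $\log A_{k+1}-\log A_{k}\ge 1-u_{k}$, apply \emph{standard} H\"older to $K=\sum_{k<K}1$, and finish with a one-variable optimization over the total potential drop $P=\Phi_{0}-\Phi_{K}$. Your route is more self-contained---no auxiliary sequence lemma---and makes the role of $P$ explicit, at the cost of the calculus step you flag at the end; the paper's route avoids that calculus but needs the growth lemma, and in fact delivers a slightly sharper constant $\tfrac{3}{2}c'$ in the exponent, which it then relaxes to $c'$ to match the stated bound. The underlying mechanism---a H\"older-type trade-off between $\lambda_{k}$ appearing with opposite powers in the potential decrease and in the growth of $A_{k}$---is the same in both.
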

\Cref{prop:ball_constrained_error_bound} is one of our main technical 
results. We  obtain it by applying a reverse H\"older's inequality on a variant of the 
performance guarantees of~\citet{MonteiroS13a}; 
we defer the proof to Appendix~\ref{sec:framework}. 

Clearly, \Cref{prop:ball_constrained_error_bound} implies that the progress 
of Algorithm \ref{alg:msaccel_framework} is related to the amount of 
movement of the iterates, i.e., the quantities $\{\normm{x_{k+1}-y_{k}}\}$.
We now show that by using a ball optimization oracle over radius $r$, we 
are able to guarantee movement by roughly $r$, which implies rapid 
convergence. We rely on the following characterization, whose proof we 
defer to \Cref{sec:lslipschitz}.
\begin{restatable}{lem}{balloptchar}
	\label{lem:opt_characterization} Let $f: \R^d \rightarrow \R$ be 
	continuously differentiable and strictly convex. For all $y\in\R^{d}$, 
	\[z=\argmin_{z' \in \ballsett{y}} f(z')\]
	is either the global minimizer of $f$, or $\normm{z-y}=r$
	and $\grad f(z) =  -\frac{\normmd{\grad f(z)}}{r} \mm (z-y)$.
\end{restatable}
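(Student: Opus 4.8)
The plan is to derive the claim from the first-order optimality condition for minimizing the convex function $f$ over the convex set $\ballsett{y}$, combined with the equality case of the Cauchy--Schwarz inequality for the positive semidefinite bilinear form $u^\top\mm w$.

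First I would reduce to the case where the ball constraint is active. If $z$ is a global minimizer of $f$ we are in the first alternative and there is nothing to prove, so assume $\grad f(z)\neq 0$. If $\normm{z-y}<r$, then for every $u\in\R^d$ we have $\normm{z+tu-y}\le r$ for all sufficiently small $|t|$: expanding $\normm{z+tu-y}^2=\normm{z-y}^2+2t(z-y)^\top\mm u+t^2\normm{u}^2$, the value is constant and equal to $\normm{z-y}^2<r^2$ when $\normm{u}=0$, and is $<r^2$ for small $|t|$ otherwise. Hence $z$ would be an unconstrained local minimizer of $f$, and therefore a global minimizer by convexity, a contradiction. So $\normm{z-y}=r$; note this also gives $\mm(z-y)\neq 0$, since $(z-y)^\top\mm(z-y)=r^2>0$.

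Next I would invoke first-order optimality: since $z$ minimizes $f$ over the convex set $\ballsett{y}$, we have $\grad f(z)^\top(z'-z)\ge 0$ for every $z'\in\ballsett{y}$. Testing this with $z'=z+tq$ for $q\in\ker\mm$ and $t\in\R$ (such $z'$ lie in $\ballsett{y}$ since $\normm{z+tq-y}=\normm{z-y}$) forces $\grad f(z)^\top q=0$ for all $q\in\ker\mm$, i.e.\ $\grad f(z)\in\im(\mm)$ --- this is in any case the standing assumption on $f$ in the framework --- so we may write $\grad f(z)=\mm h$ for some $h$, and hence $\grad f(z)^\top(z'-z)=h^\top\mm(z'-z)$. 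By Cauchy--Schwarz for $u^\top\mm w$, the minimum of $h^\top\mm(z'-y)$ over $z'\in\ballsett{y}$ equals $-r\sqrt{h^\top\mm h}$, and using $\mm\mmd\mm=\mm$ we have $\sqrt{h^\top\mm h}=\sqrt{(\mm h)^\top\mmd(\mm h)}=\normmd{\grad f(z)}$.

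Finally I would combine these: the variational inequality says $h^\top\mm(z-y)\le h^\top\mm(z'-y)$ for all $z'\in\ballsett{y}$, so $h^\top\mm(z-y)$ attains the minimum $-r\normmd{\grad f(z)}$, i.e.\ equality holds in Cauchy--Schwarz. Since $\mm(z-y)\neq 0$ and $\grad f(z)=\mm h\neq 0$, this equality forces $\mm(z-y)$ to be a negative scalar multiple of $\grad f(z)$; pinning the scalar using $\normm{z-y}=r$ and $\sqrt{h^\top\mm h}=\normmd{\grad f(z)}$ gives $\mm(z-y)=-\tfrac{r}{\normmd{\grad f(z)}}\grad f(z)$, which rearranges to $\grad f(z)=-\tfrac{\normmd{\grad f(z)}}{r}\mm(z-y)$. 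Equivalently, one can attach a Lagrange multiplier $\mu\ge 0$ to the constraint $\normm{z'-y}^2\le r^2$, whose constraint qualification holds because $\mm(z-y)\neq 0$, conclude $\grad f(z)=-\mu\mm(z-y)$, and identify $\mu=\normmd{\grad f(z)}/r$ by applying $\normmd{\cdot}$ to both sides. The step I expect to require the most care is the bookkeeping with the seminorm and its pseudoinverse in the possibly-degenerate case: checking $\grad f(z)\in\im(\mm)$ so that the displayed identities are meaningful, the identity $\normmd{\mm u}=\normm{u}$, and converting the Cauchy--Schwarz equality case into the stated proportionality of the vectors themselves (not merely of their $\mm^{1/2}$-images). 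None of this is deep, but it is where an informal argument could slip.
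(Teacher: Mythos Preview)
Your proof is correct. The paper's own argument is essentially your closing alternative: it writes down the Lagrangian $f(z)+\tfrac{\lambda}{2}(\normm{z-y}^2-r^2)$, invokes the KKT stationarity condition $\grad f(z)=-\lambda\mm(z-y)$ for some $\lambda\ge 0$, splits on $\lambda=0$ (global minimizer) versus $\lambda>0$ (boundary), and in the latter case takes $\normmd{\cdot}$ of both sides to read off $\lambda=\normmd{\grad f(z)}/r$. Your primary route---the variational inequality plus the equality case of Cauchy--Schwarz for the $\mm$-bilinear form---is a more self-contained unpacking of that same KKT conclusion, and your bookkeeping with the seminorm degeneracy (testing $z+tq$ for $q\in\ker\mm$ to force $\grad f(z)\in\im(\mm)$, and the identity $\normmd{\mm h}=\sqrt{h^\top\mm h}$) makes explicit some points the paper leaves implicit.
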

Lemma~\ref{lem:opt_characterization} implies that a $(0,r)$ ball 
optimization oracle either gives 
$z=\oracleBall(y)$ globally minimizing $f$, or 
\begin{flalign}
&\normm{z-y}=r~\mbox{and}~\normm{z-\left(y- \lambda \mmd \nabla 
	f(z)\right)}=0, \quad \text{for } \lambda = \frac{r}{\normmd{\nabla f(z)}}. 
\label{eq:lamdef}
\end{flalign}
This is precisely the type of bound compatible with both 
Proposition~\ref{prop:ball_constrained_error_bound} and  
requirement~\eqref{eq:ms_requirement} of $\omsf$. The remaining  
difficulty lies in 
that $\lambda$ also defines the point $y=y_{t_\lambda}$. 
Therefore, 
to implement an MS oracle using a ball optimization oracle we perform 
binary search over $\lambda$, with the goal of solving 
\[
 g(\lambda)\defeq\lambda\norm{\nabla f(\ztl)}_{\mm^\dagger}=r,
 ~\mbox{where}~
 \ztl \defeq \min_{z\in\ballsett{\ytl}} f(z),
 \]
 and $t_\lambda, y_{t_\lambda}$ are as in \Cref{def:ms-oracle}. 
 
 \Cref{alg:line-search-oracle} describes our binary search implementation. 
 The algorithm 
 accepts the MS oracle input $(A,x,v)$ as well as a bound $D$ on the 
 distance of $x$ and $v$ from the optimum, and desired global solution 
 accuracy $\epsilon$, and outputs either a (globally) $\epsilon$-approximate minimizer
 or a tuple $(\lambda,a_\lambda,\ytl, \tztl)$ satisfying 
 both~\eqref{eq:ms_requirement} (with $\sigma=\frac{1}{2}$) and a lower 
 bound on the distance between $\tztl$ and $\ytl$. To bound the 
 complexity of our procedure we leverage $L$-smoothness of $f$ (i.e., 
 $L$-Lipschitz continuity of $\grad f$), which allows us to bound the 
 Lipschitz constant of $g(\lambda)$ defined above. The analysis of the 
 algorithm is somewhat intricate because of the need to account for 
 inexactness in the ball optimization oracle. It results in the following 
 performance guarantee, whose proof we defer to \Cref{sec:lslipschitz}.
 
\begin{restatable}[Guarantees of Algorithm 
	\ref{alg:line-search-oracle}]{prop}{msoracleguarantee}
	\label{prop:line_search} 
	Let $L,D,\delta,r>0$ and $\oracleBall$ satisfy the requirements in Lines 
	1--3 of 
	Algorithm~\ref{alg:line-search-oracle}, and $\eps < 
	2LD^{2}$. Then, Algorithm~\ref{alg:line-search-oracle} either returns 
	$\tztl$ with $f(\tztl) - f(x^*) < \eps$, or implements a $\half$-MS oracle 
	with the additional guarantee
	\[\normm{\tztl - \ytl} \ge \frac{11r}{12}.\]
	Moreover, the number of calls to $\oracleBall$
	is bounded by
	\[
	O\left(\log\left(\frac{LD^{2}}{\eps}\right)\right).
	\]
\end{restatable}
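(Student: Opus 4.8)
The plan is to analyze the binary search of Algorithm~\ref{alg:line-search-oracle} through the scalar function $g(\lambda)\defeq\lambda\normmd{\nabla f(\ztl)}$, where $\ztl=\argmin_{z\in\ballsett{\ytl}}f(z)$ and $\ytl,a_\lambda$ are the quantities attached to $\lambda$ and the input $(A,x,v)$ as in \Cref{def:ms-oracle}. First I would record the structural facts: since $f$ is strictly convex, $\ztl$ is the unique ball-constrained minimizer and depends continuously on $\ytl$, so $g$ is continuous in $\lambda$; and by \Cref{lem:opt_characterization}, whenever $\ztl$ is not the global minimizer we have $\normm{\ztl-\ytl}=r$ and $\nabla f(\ztl)=-\tfrac{\normmd{\nabla f(\ztl)}}{r}\mm(\ztl-\ytl)$. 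Substituting gives $\ztl-(\ytl-\lambda\mmd\nabla f(\ztl))=(1-g(\lambda)/r)(\ztl-\ytl)$, so its $\normm{\cdot}$-norm equals $\abs{1-g(\lambda)/r}\,r$. Hence an exact root of $g(\lambda)=r$ already yields \eqref{eq:ms_requirement} with $\sigma=0$ and $\normm{\ztl-\ytl}=r$; the task is to show that a sufficiently accurate approximate root, read off through the $(\delta,r)$-oracle, still gives \eqref{eq:ms_requirement} with $\sigma=\half$ and $\normm{\tztl-\ytl}\ge\tfrac{11r}{12}$.

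Next I would fix the search interval $[\lambda_{\min},\lambda_{\max}]$. Since $\ytl$ is a convex combination of $x$ and $v$, both within $D$ of $x^*$, we get $\normm{\ytl-x^*}\le D$, hence $\normm{\ztl-x^*}\le D+r$ and, by $L$-smoothness, $\normmd{\nabla f(\ztl)}\le L(D+r)$; so $g(\lambda)\le\lambda L(D+r)$ and $\lambda_{\min}\asymp r/(L(D+r))$ forces $g(\lambda_{\min})<r$. At the other end, convexity gives $f(\ztl)-f(x^*)\le\normmd{\nabla f(\ztl)}\normm{\ztl-x^*}\le(g(\lambda)/\lambda)(D+r)$, so with $\lambda_{\max}\asymp r(D+r)/\eps$ the event $g(\lambda_{\max})<r$ certifies that the oracle output at $\lambda_{\max}$ is $\eps$-optimal, and the algorithm returns it; otherwise $[\lambda_{\min},\lambda_{\max}]$ is a valid bracket, nonempty because of the hypothesis $\eps<2LD^2$. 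Each bisection step queries $\oracleBall$ once (plus a gradient evaluation) at the midpoint to evaluate the inexact surrogate $\tilde g(\lambda)\defeq\lambda\normmd{\nabla f(\tztl)}$, and keeps the half on which $\tilde g(\lambda_{\mathrm{lo}})<r\le\tilde g(\lambda_{\mathrm{hi}})$ survives; note that maintaining this bracket requires no monotonicity of $g$. To count the steps I would bound the Lipschitz constant $\Gamma$ of $g$ on $[\lambda_{\min},\lambda_{\max}]$ using $L$-smoothness together with the bounded rate at which $\ytl$, and hence the ball-constrained minimizer, moves with $\lambda$ once $\lambda\ge\lambda_{\min}$; this yields $\Gamma=\poly(L,D,r,r^{-1})$ and hence $O(\log(\lambda_{\max}\Gamma/r))=O(\log(LD^2/\eps))$ halvings to reach bracket width $\le r/(8\Gamma)$.

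The third step closes the loop while tracking the oracle error. Writing $\tztl=\ztl+e$ with $\normm{e}\le\delta$ and using $\normmd{\nabla f(\tztl)-\nabla f(\ztl)}\le L\delta$: first $\abs{\tilde g(\lambda)-g(\lambda)}\le\lambda_{\max}L\delta$, so a final bracket of width $\le r/(8\Gamma)$ together with $\lambda_{\max}L\delta\le r/8$ forces the output $\lambda$ to satisfy $g(\lambda)\in[\tfrac34 r,\tfrac54 r]$, i.e.\ $\abs{1-g(\lambda)/r}\,r\le r/4$; second, $\tztl-(\ytl-\lambda\mmd\nabla f(\tztl))$ differs from $(1-g(\lambda)/r)(\ztl-\ytl)$ by at most $\delta+\lambda L\delta$ in $\normm{\cdot}$, while $\normm{\tztl-\ytl}\ge r-\delta$. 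Thus the Lines~1--3 requirements --- which I expect amount to $\delta\le r/12$ together with $\delta\lesssim\eps/(LD)$ --- make $\normm{\tztl-(\ytl-\lambda\mmd\nabla f(\tztl))}\le r/4+\delta+\lambda L\delta\le\half(r-\delta)\le\half\normm{\tztl-\ytl}$ and $\normm{\tztl-\ytl}\ge r-\delta\ge\tfrac{11r}{12}$. The early-return branch is handled by the same estimate $f(\tztl)-f(x^*)\le\normmd{\nabla f(\tztl)}\normm{\tztl-x^*}$, with a little slack in $\lambda_{\max}$ to absorb the $\delta$ term.

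I expect the main obstacle to be precisely this bookkeeping: choosing $\lambda_{\min},\lambda_{\max},\delta$ and the bisection depth consistently so that the inexact surrogate $\tilde g$ keeps the bracket meaningful at every step and places the final $g(\lambda)$ inside a window narrow enough for $\sigma=\half$, with the $\tfrac{11r}{12}$ and $\half$ constants coming out cleanly. The secondary technical point is the explicit Lipschitz bound for $g$, which requires controlling how fast the ball-constrained minimizer moves as its center slides along $[x,v]$; $L$-smoothness and the lower bound $\lambda\ge\lambda_{\min}$ (keeping $dt_\lambda/d\lambda$ bounded) should suffice, but this is the one place the argument goes beyond routine estimation.
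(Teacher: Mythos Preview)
Your plan matches the paper's proof closely: define $g(\lambda)=\lambda\normmd{\nabla f(\ztl)}$, bracket a root of $g(\lambda)=r$ in $[\ell,u]$ with $\ell\asymp r/(LD)$ and $u\asymp r(D+r)/\eps$, handle the early-return case via convexity, bisect using the noisy surrogate $\tilde g(\lambda)=\lambda\normmd{\nabla f(\tztl)}$, and verify \eqref{eq:ms_requirement} via the identity from \Cref{lem:opt_characterization} together with the error bounds $|\tilde g-g|\le L\lambda\delta$ and $\normm{\tztl-\ztl}\le\delta$. The constants $\tfrac{11r}{12}$ and $\sigma=\tfrac12$ fall out exactly as you describe.

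The one place your sketch diverges from the paper is the Lipschitz bound on $g$. You propose to control $dt_\lambda/d\lambda$ using the lower bound $\lambda\ge\lambda_{\min}$; but $|dt_\lambda/d\lambda|\sim 1/\sqrt{A\lambda}$ for small $\lambda$, so this route yields a bound depending on the input $A$ (and, after multiplying by $\lambda_{\max}$, on $\eps^{-1}$), which muddles the iteration count and does not obviously give $\Gamma=\poly(L,D,r,r^{-1})$ uniformly. The paper instead shows by direct calculation that $\bigl|\lambda\cdot \tfrac{d}{d\lambda}t_\lambda\bigr|\le\tfrac12$ holds for all $\lambda\ge 0$ and all $A\ge 0$, and combines this with a lemma establishing $\bigl\|\tfrac{d}{dt}z_t\bigr\|_{\nabla^2 f(z_t)}\le\norm{x-v}_{\nabla^2 f(z_t)}$ (the rate at which the ball-constrained minimizer moves as its center slides along $[x,v]$) to obtain the clean global bound $|g'(\lambda)|\le L(2D+r)$, independent of $A$. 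That uniform-in-$\lambda$ product bound is the right packaging of the ``secondary technical point'' you flagged; without it, squeezing the call count down to exactly $O(\log(LD^2/\eps))$---with no stray $\log A$ or $\log(1/r)$ terms---becomes awkward.
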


\begin{algorithm}[ht!]
\begin{algorithmic}[1]
\caption{Monteiro-Svaiter oracle 
implementation}\label{alg:line-search-oracle}

\STATE{\textbf{Input: }Function $f:\R^{d}\rightarrow\R$ that is strictly 
convex, $L$-smooth in $\normm{\cdot}$.}

\STATE{\textbf{Input: }$A\in\R_{\ge0}$ and $x,v\in\R^d$ satisfying 
 $\normm{x-x^{*}}$ and $\normm{v-x^{*}}\le D$ where $x^*=\argmin_x 
 f(x)$.}

\STATE{\textbf{Input: }A $\left(\delta,r\right)$-ball optimization 
	oracle $\oracleBall$, where $\delta=\frac{r}{12(1+Lu)}$ and $u=\frac{2(D 
	+ r)r}{\eps}$}

\STATE{Set $\lambda\gets u$ and $\ell\gets\frac{r}{2LD}$}

\STATE{$\tztl\gets\oracleBall(\ytl)$}

\IF{$u\normmd{\nabla f(\tztl)}\le r+uL\delta$}

\RETURN $(\lambda, a_\lam, \ytl, \tztl)$

\ELSE

\WHILE{$\left|\lambda\normmd{\nabla f(\tztl)}-r\right|>\frac{r}{6}$}

\STATE{$\lambda\gets\frac{\ell+u}{2}$}

\STATE{$\tztl\gets\oracleBall(\ytl)$}

\IF{$\lambda\normmd{\nabla f(\tztl)}\ge r$}

\STATE{$u\gets\lambda$}%

\ELSE

\STATE{$\ell\gets\lambda$}%

\ENDIF

\ENDWHILE

\RETURN $(\lambda, a_\lam, \ytl, \tztl)$
\ENDIF
\end{algorithmic}
\end{algorithm}
Finally, we state our main acceleration result, whose proof we defer to 
\Cref{sec:lslipschitz}.
\begin{restatable}[Acceleration with a ball optimization 
oracle]{thm}{mainclaimaccel}
	\label{thm:mainclaim-accel}  
	Let $\oracleBall$ be an
	$(\frac{r}{12+126LRr/\eps},r)$-ball
	optimization oracle for strictly convex and $L$-smooth
	$f:\R^{d}\rightarrow\R$ with minimizer $x^{*}$, and initial point $x_0$ 
	satisfying 
		\[\normm{x_{0}-x^{*}}\le R~\mbox{and}~ f(x_0) - f(x^*) \le \eps_0.\]
	Then, 
	Algorithm~\ref{alg:msaccel_framework} using 
	Algorithm~\ref{alg:line-search-oracle} as a Monteiro-Svaiter oracle with $D=\sqrt{18}R$ 
	produces an iterate $x_k$ with $f(x_k)-f(x^{*})\le\eps$, in
	\[
	O\left({{\left(\frac{R}{r}\right)^{2/3}
			\log\bigg(\frac{\eps_0}{\eps}\bigg)}}
		\log\bigg(\frac{LR^{2}}{\eps}\bigg)\right)
	\]
	calls to $\oracleBall$.
\end{restatable}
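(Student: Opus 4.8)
The plan is to run Algorithm~\ref{alg:msaccel_framework} with Algorithm~\ref{alg:line-search-oracle} as the $\tfrac12$-MS oracle and combine the per-call guarantee of \Cref{prop:line_search} with the convergence bound of \Cref{prop:ball_constrained_error_bound}; the one genuinely new ingredient needed is to show the iterates never leave the ball of radius $D=\sqrt{18}R$ around $x^*$, which is the domain hypothesis of \Cref{prop:line_search}. First I would clear away degenerate cases: if $f(x_0)-f(x^*)\le\eps$ we are done, so assume $\eps<\eps_0$, and since $L$-smoothness gives $f(x_0)-f(x^*)\le\tfrac L2\normm{x_0-x^*}^2$ we may take $\eps_0\le\tfrac L2R^2$, whence $\eps<2LD^2$ as \Cref{prop:line_search} requires; if $r>R$ then $x^*\in\ballsett{x_0}$ and a single call to $\oracleBall(x_0)$ essentially finishes, so assume $r\le R$. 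With $D=\sqrt{18}R$ and $u=\tfrac{2(D+r)r}{\eps}$, the inequality $2(D+r)\le 2(\sqrt{18}+1)R\le 10.5R$ then gives $12(1+Lu)\le 12+\tfrac{126LRr}{\eps}$, so the oracle hypothesized in the theorem is a valid $(\delta,r)$-ball optimization oracle for Algorithm~\ref{alg:line-search-oracle} with parameter $D$, and \Cref{prop:line_search} applies at any iteration for which $\normm{x_k-x^*},\normm{v_k-x^*}\le D$.

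The core of the proof is an induction on $k$ showing that, as long as the algorithm has not yet produced a point with objective gap below $\eps$, all iterates satisfy $\normm{x_k-x^*},\normm{v_k-x^*}\le\sqrt{18}R$. The base case holds since $v_0=x_0$ and $\normm{x_0-x^*}\le R$. For the step, the induction hypothesis lets us call \Cref{prop:line_search} at iteration $k$: either it returns $x_{k+1}$ with $f(x_{k+1})-f(x^*)<\eps$ (done), or it returns a valid $\tfrac12$-MS tuple with $\tfrac{11r}{12}\le\normm{x_{k+1}-y_k}\le r$ (the upper bound since $x_{k+1}\in\ballsett{y_k}$). In the latter case iterations $0,\dots,k$ have all produced valid $\tfrac12$-MS outputs, so — as in the proof of \Cref{prop:ball_constrained_error_bound}, using~\eqref{eq:ms_requirement}, convexity, the update $v_{j+1}=v_j-a_{j+1}\mmd\grad f(x_{j+1})$ and the identity $a_{j+1}^2=\lambda_{j+1}A_{j+1}$ — the potential $\Phi_j:=A_j(f(x_j)-f(x^*))+\tfrac12\normm{v_j-x^*}^2$ obeys $\Phi_j-\Phi_{j+1}\ge\tfrac{1-\sigma^2}{2}\bigl(\tfrac{A_{j+1}}{a_{j+1}}\bigr)^2\normm{x_{j+1}-y_j}^2\ge 0$ with $\sigma=\tfrac12$. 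Since $A_0=R^2/(2\eps_0)$ and $f(x_0)-f(x^*)\le\eps_0$ give $\Phi_0\le R^2$, we get $\Phi_j\le R^2$ for $j\le k+1$, hence $\normm{v_j-x^*}\le\sqrt2R$ for $j\le k+1$, and, summing the decrease and using $\Phi_{k+1}\ge 0$, the budget $\sum_{j=1}^{k+1}\bigl(\tfrac{A_j}{a_j}\bigr)^2\le B$ with $B:=\tfrac{2R^2}{(1-\sigma^2)(11/12)^2r^2}=O((R/r)^2)$.

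Bounding $\normm{x_{k+1}-x^*}$ is the step I expect to be the main obstacle. Setting $e_j:=x_{j+1}-\bigl(y_j-\lambda_{j+1}\mmd\grad f(x_{j+1})\bigr)$, the definitions of $y_j,v_{j+1}$ and $a_{j+1}^2=\lambda_{j+1}A_{j+1}$ give $x_{j+1}=\tfrac{A_j}{A_{j+1}}x_j+\tfrac{a_{j+1}}{A_{j+1}}v_{j+1}+e_j$, while~\eqref{eq:ms_requirement} with $\sigma=\tfrac12$ gives $\normm{e_j}\le\tfrac12\normm{x_{j+1}-y_j}\le\tfrac r2$. Unrolling this recursion (the products $\prod A_i/A_{i+1}$ telescope) yields
\[
x_{k+1}=\frac{A_0}{A_{k+1}}x_0+\frac{1}{A_{k+1}}\sum_{j=0}^{k}a_{j+1}v_{j+1}+\frac{1}{A_{k+1}}\sum_{j=0}^{k}A_{j+1}e_j,
\]
where the first two groups of terms form a convex combination of points all within $\sqrt2R$ of $x^*$, so $\normm{x_{k+1}-x^*}\le\sqrt2R+\tfrac r2\sum_{j=1}^{k+1}\tfrac{A_j}{A_{k+1}}$. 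To control this sum, write $\tfrac{A_j}{A_{k+1}}=\prod_{i=j+1}^{k+1}(1-a_i/A_i)\le\exp\bigl(-\sum_{i=j+1}^{k+1}a_i/A_i\bigr)$ and observe that AM--HM followed by Cauchy--Schwarz against the budget $\sum_i(A_i/a_i)^2\le B$ give $\sum_{i=j+1}^{k+1}a_i/A_i\ge(k+1-j)^{3/2}/\sqrt B$; summing over $j$ then gives $\sum_{j=1}^{k+1}A_j/A_{k+1}\le 1+\int_0^\infty e^{-t^{3/2}/\sqrt B}\,dt=1+O(B^{1/3})=O((R/r)^{2/3})$. Hence $\normm{x_{k+1}-x^*}\le\sqrt2R+O(r^{1/3}R^{2/3})\le\sqrt2R+O(R)$, and tracking explicit constants (using $r\le R$) makes the right side at most $\sqrt{18}R$, closing the induction.

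With the invariant in hand, \Cref{prop:line_search} applies at every iteration: either some iteration returns an $\eps$-approximate minimizer, or every iteration implements a $\tfrac12$-MS oracle with $\normm{x_{k+1}-y_k}\ge\tfrac{11r}{12}$, in which case \Cref{prop:ball_constrained_error_bound} applied with radius $\tfrac{11r}{12}$ and $\sigma=\tfrac12$ gives $f(x_k)-f(x^*)\le 2\eps_0\exp\bigl(-(\tfrac{11r}{24R})^{2/3}(k-1)\bigr)$, which falls below $\eps$ once $k=O((R/r)^{2/3}\log(\eps_0/\eps))$. Each iteration costs $O(\log(LD^2/\eps))=O(\log(LR^2/\eps))$ calls to $\oracleBall$ by \Cref{prop:line_search}, so the total is $O((R/r)^{2/3}\log(\eps_0/\eps)\log(LR^2/\eps))$, as claimed.
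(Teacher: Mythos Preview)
Your proof is correct and follows the same high-level structure as the paper's: verify the $\delta$ in the oracle matches the requirement of \Cref{prop:line_search} once $D=\sqrt{18}R$, establish a domain bound so that \Cref{prop:line_search} applies at every iteration, then combine \Cref{prop:line_search} with \Cref{prop:ball_constrained_error_bound}.

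The substantive difference is in how you obtain the domain bound $\normm{x_k-x^*},\normm{v_k-x^*}\le\sqrt{18}R$. The paper simply cites \Cref{lem:domain_bound}, whose proof unrolls $\normm{x_{k+1}-x^*}\le\normm{y_k-x^*}+\normm{x_{k+1}-y_k}$ and bounds the accumulated movement $\tfrac{1}{A_{k+1}}\sum_i A_{i+1}\normm{x_{i+1}-y_i}$ by a single Cauchy--Schwarz using $\lambda_{i+1}A_{i+1}=a_{i+1}^2$ together with the potential budget from \Cref{lem:accel_framework}; this directly yields $\normm{x_k-x^*}\le\tfrac{2-\sigma}{1-\sigma}\sqrt{2p_0}=3\sqrt{2}R$. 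You instead derive the exact recursion $x_{j+1}=\tfrac{A_j}{A_{j+1}}x_j+\tfrac{a_{j+1}}{A_{j+1}}v_{j+1}+e_j$ with $\normm{e_j}\le r/2$, and then control $\sum_j A_j/A_{k+1}$ via an AM--HM/Cauchy--Schwarz bound on $\sum a_i/A_i$ and an exponential decay/integral estimate. Both arguments are valid and even yield comparable (indeed, your constant is a bit sharper). The paper's route is shorter and more transparent; your route is more self-contained and, by framing the argument as an explicit induction, makes visible the mild circularity (the domain bound is needed for \Cref{prop:line_search}, whose MS output is needed for the potential inequalities underlying the domain bound) that the paper's proof glosses over when it writes ``Lemma~\ref{lem:domain_bound} implies.''
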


\section{Ball Optimization Oracle for Hessian Stable Functions}

\label{sec:stable_Hessian}

In this section, we give an implementation of a ball optimization oracle 
$\oracleBall$ for functions satisfying the following notion of Hessian 
stability, which is a slightly stronger version of the condition in~\citet{karimireddy2018global}.\footnote{
A variant of the algorithm we develop also works under the 
weaker stability condition. We state the stronger condition as it is simpler, and holds for all our applications.
}
\begin{defn}[Hessian stability]
	A twice-differentiable function $f:\R^d\to\R$ is 
	\emph{$(r,c)$-Hessian
		stable }for $r,c\geq0$ with respect to norm $\norm{\cdot}$
	if for all $x,y\in\R^{d}$ with $\norm{x-y}\leq r$ we have 
	$c^{-1}\hess f(y)\preceq\hess f(x)\preceq c\hess f(y)$.%
\end{defn}

We give a method that 
implements a $(\delta,r)$-ball oracle (as in \Cref{def:ball_opt}) for $(r, 
c)$-stable functions in $\normm{\cdot}$, requiring 
$\otilde(c)$ linear system solutions.  Our method's complexity has a (mild) 
polylogarithmic dependence on the \emph{condition number} of $f$ in 
$\normm{\cdot}$. The main result of this section is  
Theorem~\ref{thm:ballopt}, which guarantees the correctness and complexity 
our ball optimization oracle implementation. 
We prove it in two parts:  first, we provide a convergence guarantee for 
trust region subproblems, and then use it as a primitive in Algorithm~\ref{alg:nesterov_h}, an accelerated ball-constrained Newton's method. Finally, we describe a sufficient condition for Hessian stability to hold.

\subsection{Trust region subproblems}
\label{sec:quad_constrained_quad}

We describe a procedure for solving the convex trust region problem
\begin{equation*}
\min_{x\in\ballsett{\bx}}Q(x)\defeq-g^{\top}x+\half x^{\top}\mh x.\label{eq:qcqo}
\end{equation*}
While trust region problems of this form are well-studied 
\cite{conn2000trust, Hager01}, we could not find a concrete bound on the 
number of linear system solutions required to solve them approximately. In 
Appendix~\ref{app:trust-region} we describe the procedure
$\textsc{SolveTR}(\bx, r, g, \mh, \mm, \Delta)$ 
(Algorithm~\ref{alg:tr-method}) that uses a well-known binary search 
strategy to solve the trust region problem to accuracy $\Delta$. The 
procedure enjoys the following convergence guarantee.
\begin{restatable}{prop}{ghsolve}
\label{prop:ghsolve}
Let $\mm$ and $\mh$ share a kernel, $\mu \mm \preceq \mh$ for $\mu > 0$, and let $\Delta>0$. The 
procedure $\textsc{SolveTR}(\bx, r, g, \mh, \mm, \Delta)$ solves 
\[
O\left(\log\left(\frac{\normmd{\mh\bx-g}^{2}}{r\mu^{2}\Delta}\right)\right)
\]
linear systems in matrices of the form $\mh+\lambda\mm$ for 
$\lambda\ge0$, and returns $\tx\in\ballset$ with $\normm{\tx-\quadmin}\le\Delta$,
where 
\[
\quadmin\in\argmin_{x \in \ballset}-g^{\top}x+\half x^{\top}\mh x.
\]
\end{restatable}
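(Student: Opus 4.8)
The plan is to carry out the textbook Lagrangian analysis of the convex trust region subproblem, made quantitative enough to control a binary search. Passing to the shifted variable $\delta=x-\bx$, the problem $\min_{x\in\ballset}Q(x)$ becomes $\min_{\normm{\delta}\le r}\{-\hg^\top\delta+\half\delta^\top\mh\delta\}$ with $\hg\defeq g-\mh\bx$, so that $\normmd{\hg}=\normmd{\mh\bx-g}$. Since $\mm$ and $\mh$ share a kernel and $\mu\mm\preceq\mh$, the matrix $\mh+\lambda\mm$ has kernel exactly $\ker(\mm)$ and is positive definite on the common image of $\mm$ and $\mh$ for every $\lambda\ge0$; restricting all vectors to that subspace (where $\hg$ must lie, as otherwise $Q$ is unbounded below on $\ballset$ and the $\argmin$ in the statement is empty) I define $\delta(\lambda)\defeq(\mh+\lambda\mm)^\dagger\hg$. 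By the KKT conditions for convex quadratic programs, the minimizer is $\quadmin=\bx+\delta(\lambda^\star)$ where either (i) $\lambda^\star=0$ and $\normm{\delta(0)}\le r$ — the unconstrained minimizer lies in the ball and one linear solve returns it — or (ii) $\lambda^\star>0$ with $\normm{\delta(\lambda^\star)}=r$. I would dispatch (i) immediately and focus on (ii).

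For (ii), I would first record the standard monotonicity facts: $\lambda\mapsto\normm{\delta(\lambda)}$ is continuous and nonincreasing on $[0,\infty)$ — differentiate $\normm{\delta(\lambda)}^2=\hg^\top(\mh+\lambda\mm)^\dagger\mm(\mh+\lambda\mm)^\dagger\hg$ using $\frac{d}{d\lambda}(\mh+\lambda\mm)^\dagger=-(\mh+\lambda\mm)^\dagger\mm(\mh+\lambda\mm)^\dagger$ — with $\normm{\delta(0)}>r$ in case (ii) and $\normm{\delta(\lambda)}\to0$ as $\lambda\to\infty$, so the root $\lambda^\star$ exists and is bracketed. Moreover $(\mu+\lambda)\normm{\delta(\lambda)}^2\le\delta(\lambda)^\top(\mh+\lambda\mm)\delta(\lambda)=\hg^\top\delta(\lambda)\le\normmd{\hg}\normm{\delta(\lambda)}$ gives $\normm{\delta(\lambda)}\le\normmd{\hg}/(\mu+\lambda)$, hence $\lambda^\star\le\normmd{\hg}/r$. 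Thus the binary search runs on $[\ell,u]=[0,\normmd{\hg}/r]$, bisecting and updating by the sign of $\normm{\delta(\mathrm{mid})}-r$ so as to maintain the invariant $\normm{\delta(\ell)}\ge r\ge\normm{\delta(u)}$ (equivalently $\lambda^\star\in[\ell,u]$), and returns $\tx=\bx+\delta(u)$, which is feasible since $\normm{\delta(u)}\le r$.

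The last step converts bracket width into accuracy on $\quadmin$. The key estimate is $\normm{\frac{d}{d\lambda}\delta(\lambda)}\le\frac{1}{\mu+\lambda}\normm{\delta(\lambda)}$, obtained from $\frac{d}{d\lambda}\delta(\lambda)=-(\mh+\lambda\mm)^\dagger\mm\,\delta(\lambda)$ together with $\mh+\lambda\mm\succeq(\mu+\lambda)\mm$ and pseudoinverse antitonicity (valid once restricted to the shared image). Since $u\ge\lambda^\star$, on $[\lambda^\star,u]$ we have $\normm{\delta(\lambda)}\le\normm{\delta(\lambda^\star)}=r$ by monotonicity, so integrating gives $\normm{\tx-\quadmin}=\normm{\delta(u)-\delta(\lambda^\star)}\le\frac{r}{\mu}(u-\lambda^\star)\le\frac{r}{\mu}(u-\ell)$, which after $t$ bisections is at most $\frac{r}{\mu}\cdot\frac{\normmd{\hg}/r}{2^t}=\frac{\normmd{\hg}}{\mu 2^t}$. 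Choosing $t=O\!\big(\log(\normmd{\hg}/(\mu\Delta))\big)$, which is $O\!\big(\log(\normmd{\hg}^2/(r\mu^2\Delta))\big)$ because $\normmd{\hg}>r\mu$ in case (ii) (from $r<\normm{\delta(0)}\le\normmd{\hg}/\mu$), makes this at most $\Delta$; the total number of linear solves in matrices $\mh+\lambda\mm$ is one per bisection plus $O(1)$ for the unconstrained step and bracket initialization, as claimed.

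I expect the main obstacle to be the bookkeeping around the shared kernel of $\mm$ and $\mh$: verifying that $\hg$ lies in the common image, and that $\delta(\lambda)$, the derivative formulas, the antitonicity step $(\mh+\lambda\mm)^\dagger\preceq\frac{1}{\mu+\lambda}\mmd$, and the resulting operator bound $\normm{(\mh+\lambda\mm)^\dagger\mm w}\le\frac{1}{\mu+\lambda}\normm{w}$ are all legitimate once everything is restricted to that subspace, and that the linear systems are well posed. A secondary subtlety is producing a point that is simultaneously feasible ($\tx\in\ballset$) and $\Delta$-close to $\quadmin$ in $\normm{\cdot}$, which is exactly why I maintain a one-sided bracket invariant and return the upper endpoint rather than an arbitrary point of the final interval.
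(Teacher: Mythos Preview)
Your argument is correct and follows the same overall architecture as the paper: shift to $\hg=g-\mh\bx$, use the KKT characterization $\quadmin=\bx+(\mh+\lambda^\star\mm)^\dagger\hg$, establish monotonicity of $\lambda\mapsto\normm{\delta(\lambda)}$, bracket $\lambda^\star\in[0,\normmd{\hg}/r]$, and binary search to a precision that guarantees $\normm{\cdot}$-closeness to $\quadmin$ while preserving feasibility.

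The one place you differ is in the Lipschitz step. The paper (Lemma~\ref{lem:lipquad}) passes to the eigenbasis of $\tmh=\mmdh\mh\mmdh$ and bounds $\normm{\delta(\lambda)-\delta(\lambda^\star)}$ coordinatewise, concluding that $|\lambda-\lambda^\star|\le\Delta\mu^2/\normmd{\hg}$ suffices; this yields the stated $\log(\normmd{\hg}^2/(r\mu^2\Delta))$ iteration count directly. You instead bound the derivative $\normm{\tfrac{d}{d\lambda}\delta(\lambda)}\le\tfrac{1}{\mu+\lambda}\normm{\delta(\lambda)}\le r/\mu$ on $[\lambda^\star,u]$ and integrate, obtaining the slightly sharper requirement $u-\ell\le\Delta\mu/r$ and hence $\log(\normmd{\hg}/(\mu\Delta))$ iterations, which you then absorb into the paper's stated bound via $\normmd{\hg}>r\mu$. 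Your derivative approach avoids the explicit diagonalization and is marginally tighter; the paper's eigenbasis computation is more direct and sidesteps the pseudoinverse-antitonicity bookkeeping you flag as the main obstacle. Both are valid and the overall proof structure is the same.
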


\subsection{Ball-constrained Newton's method}

Theorem \ref{thm:ballopt} follows from an analysis of 
Algorithm~\ref{alg:nesterov_h}, which is essentially 
Nesterov's accelerated gradient method in the Euclidean seminorm 
$\norm{\cdot}_\mh$ with $\mh = \nabla^2 f(\bx)$,
or equivalently a sequence of constrained Newton steps using
the Hessian of the center point $\bx$. Other works \cite{DevolderGN14, 
CohenDO18} consider variants of Nesterov's accelerated method in arbitrary 
norms and under various noise assumptions, but do not give convergence 
guarantees compatible with the type of error incurred by our trust region 
subproblem solver. We state the convergence guarantee below, and defer its 
proof to Appendix~\ref{app:newton-appendix} for completeness; it is a 
simple adaptation of the standard acceleration analysis under inexact 
subproblem solves.

\begin{restatable}{thm}{ballopt}\label{thm:ballopt}
	Let $f$ be $L$-smooth, $\mu$-strongly convex, and $(r, c)$-Hessian 
	stable in the seminorm $\normm{\cdot}$. Then, 
	Algorithm~\ref{alg:nesterov_h} implements a $(\delta, r)$-ball 
	optimization oracle for query point $\bx$ with $\normm{\bx-x^*}\le 
	D$ for $x^*$ the minimizer of $f$, and requires 
	\[
	O\left(c\log^2\left(\frac{\kappa (D+r)c}{\delta}\right)\right)
	\]
	linear system solves in matrices of the form $\mh+\lambda\mm$ for nonnegative $\lam$, where $\kappa = L/\mu$.
\end{restatable}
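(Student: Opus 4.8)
The plan is to use Hessian stability to argue that, restricted to $\ballset$, the objective $f$ looks like a function of condition number $c^2$ in the seminorm $\norm{\cdot}_\mh$ generated by $\mh = \nabla^2 f(\bx)$, so that an accelerated method in this norm converges in $\otilde(c)$ iterations, each of which is a single trust-region solve handled by $\textsc{SolveTR}$. Indeed, for any $x$ on a segment contained in $\ballset$ the stability hypothesis gives $c^{-1}\mh \preceq \nabla^2 f(x) \preceq c\mh$; integrating, $f$ is $c$-smooth and $c^{-1}$-strongly convex relative to $\norm{\cdot}_\mh$ over $\ballset$. Since \Cref{alg:nesterov_h} is Nesterov's accelerated gradient method in the seminorm $\norm{\cdot}_\mh$ for minimizing $f$ over $\ballset$, and all its iterates and extrapolation points lie in $\ballset$ by construction (each is a $\textsc{SolveTR}$ output over $\ballset$ or a convex combination of such points, so every segment used in the analysis stays in $\ballset$), the standard accelerated rate applies with these local constants: after $T$ steps the constrained suboptimality gap is at most $(1-\Omega(1/c))^T$ times the initial potential, plus an error term from solving the trust-region subproblems only to accuracy $\Delta$.

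For initialization, take $x_0 = v_0 = \bx$. Convexity gives $f(\bx)-f(\ballmin) \le \normmd{\nabla f(\bx)}\normm{\bx-\ballmin} \le r\normmd{\nabla f(\bx)}$, and $L$-smoothness gives $\normmd{\nabla f(\bx)} = \normmd{\nabla f(\bx)-\nabla f(x^*)} \le L\normm{\bx-x^*} \le LD$, while $\norm{\bx-\ballmin}_\mh^2 \le Lr^2$; so the initial potential is $O(L(D+r)^2)$. To convert a function-value bound into the iterate bound of \Cref{def:ball_opt}, apply $\mu$-strong convexity of $f$ in $\normm{\cdot}$ at $\ballmin$ together with first-order optimality, $\inner{\nabla f(\ballmin)}{x_T - \ballmin}\ge 0$ (valid because $\ballmin$ minimizes $f$ over the convex set $\ballset \ni x_T$): this yields $\normm{x_T-\ballmin}^2 \le \tfrac{2}{\mu}(f(x_T)-f(\ballmin))$. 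Hence it suffices to drive the gap below $\mu\delta^2/2$. The main contraction term does so after $T = O(c\log\tfrac{L(D+r)^2}{\mu\delta^2})$ iterations, and the accumulated inexactness error is kept below $\mu\delta^2/4$ by choosing the subproblem accuracy $\Delta$ polynomially small in $T$ and the other parameters — this is where the extra factor $c$ inside the logarithm comes from, since then $\log(1/\Delta) = O(\log\tfrac{\kappa(D+r)c}{\delta})$; the routine inexact-acceleration analysis makes this precise.

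It remains to bound the per-iteration cost. Each iteration invokes $\textsc{SolveTR}(\bx, r, g, \mh, \mm, \Delta)$ a constant number of times with the fixed matrix $\mh = \nabla^2 f(\bx) \succeq \mu\mm$ and a linear term $g$ assembled from $\nabla f$ at iterates inside $\ballset$, for which $L$-smoothness gives $\normmd{\mh\bx - g} = O(L(D+r))$. By \Cref{prop:ghsolve}, each call solves $O(\log(\normmd{\mh\bx-g}^2/(r\mu^2\Delta)))$ linear systems in matrices $\mh+\lambda\mm$, $\lambda\ge0$, which is $O(\log\tfrac{\kappa(D+r)c}{\delta})$ with the chosen $\Delta$. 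Multiplying by the iteration count $T$ gives the claimed $O(c\log^2\tfrac{\kappa(D+r)c}{\delta})$ total linear system solves, with $\kappa = L/\mu$.

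The main obstacle is the inexact-acceleration bookkeeping: one must track how the $\textsc{SolveTR}$ errors propagate through the accelerated potential (estimate sequence or linear coupling), show they accumulate at most polynomially in $T$, and carefully verify that every iterate — especially the extrapolation points where gradients are evaluated — stays in $\ballset$, so that the sandwich $c^{-1}\mh \preceq \nabla^2 f \preceq c\mh$ applies along all the segments the analysis touches. A secondary point is checking that the logarithmic argument in \Cref{prop:ghsolve} remains $\poly(\kappa, D, r, c, 1/\delta)$ throughout, which again follows from $L$-smoothness and the containment of iterates in $\ballset$. None of this is deep, but some care is needed to keep the constants and the two nested logarithms straight.
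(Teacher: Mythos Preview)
Your proposal is correct and follows essentially the same approach as the paper: recognize that Hessian stability makes $f$ have relative condition number $c^2$ in $\norm{\cdot}_\mh$ over $\ballset$, run an accelerated method in that norm whose iterates stay in $\ballset$ by convexity, handle the $\textsc{SolveTR}$ inexactness via a standard error-accumulation argument, and convert to an iterate-distance guarantee via strong convexity before multiplying by the per-step cost from \Cref{prop:ghsolve}. The paper carries this out with an explicit potential $\Phi_k = f(x_k)-f(\ballmin)+\tfrac{1}{2c}\norm{z_k-\ballmin}_\mh^2$, showing $\Phi_{k+1}\le(1-\tfrac{1}{c})\Phi_k+\tfrac{E}{c}$ with $E=L\Delta(5r+D)$ (so the accumulated error is bounded by a geometric series, hence by $E$ itself rather than growing polynomially in $T$), and reads off the iterate bound from the $\norm{z_k-\ballmin}_\mh^2$ term via $\mh\succeq\mu\mm$ rather than from the function-value term as you do; both routes give the same logarithm.
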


\begin{algorithm}
\caption{Accelerated Newton's method}\label{alg:nesterov_h}
\begin{algorithmic}[1]
\STATE{\textbf{Input:} Radius $r$ and accuracy $\delta$ such that $r \ge \delta > 0$.}
\STATE{\textbf{Input:} Function $f:\R^d\rightarrow\R$ that is $L$-smooth, $\mu$-strongly convex, and $(r, c)$-Hessian stable in $\normm{\cdot}$ with minimizer $x^*$.}
\STATE{\textbf{Input:} Center point $\bx\in\R^d$ satisfying $\normms{\bx - x^*} \le D$.}
\STATE{$\mh \gets \nabla^2 f(\bx)$}
\STATE {$\alpha\gets c^{-1}$, $\Delta \gets \frac{\mu\delta^2}{4Lc(5r+D)}$, $x_0 \gets \bx$, $z_0\gets \bx$}
\FOR {$k=0, 1,2,\ldots$}
\STATE{$y_{k}\gets\frac{1}{1+\alpha}x_{k}+\frac{\alpha}{1+\alpha}z_{k}$} 
\STATE $g_k \gets \nabla f(y_k) - \mh(\alpha y_k + (1 - \alpha)z_k)$ 
\STATE{$z_{k+1}\gets\textsc{SolveTR}(\bx, r, g_k, \mh, \mm, \Delta)$} \label{line:AGD-solve-sub}
\STATE{$x_{k+1}\gets\alpha z_{k+1}+(1-\alpha)x_{k}$}
\ENDFOR
\end{algorithmic}
\end{algorithm}

\subsection{Quasi-self-concordance implies Hessian stability}

We state a sufficient condition for Hessian
stability below.
We use this result in 
Section~\ref{sec:applications} to establish Hessian stability in 
several structured problems. 
\begin{defn}[Quasi-self-concordance]
 We say that thrice-differentiable $f:\R^{d}\rightarrow\R$ is 
 $M$-quasi-self-concordant (QSC)
with respect to some norm $\norm{\cdot}$, for $M\geq0$, if for all
$u,h,x\in\R^{d}$,
\[
\left|\nabla^{3}f(x)[u,u,h]\right|\le M\norm h\norm u_{\nabla^{2}f(x)}^{2},
\]
i.e., the restriction
of the third-derivative tensor of $f$ to any direction is bounded by a multiple
of its Hessian norm.
\end{defn}

\begin{restatable}{lem}{quasisc}
\label{lem:quasi-to-stable}If thrice-differentiable $f:\R^{d}\rightarrow\R$
is $M$-quasi-self-concordant with respect to norm $\norm{\cdot}$,
then it is $(r,\exp(Mr))$-Hessian stable with respect to $\norm{\cdot}$.
\end{restatable}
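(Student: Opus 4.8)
The plan is to fix arbitrary $x,y \in \R^d$ with $\norm{x-y} \le r$ and show $\hess f(y) \preceq \exp(Mr)\,\hess f(x)$; the reverse inequality then follows by symmetry. The natural approach is to test the matrix inequality against an arbitrary direction $u$ and control the scalar function $\phi(t) \defeq u^\top \hess f(x + t(y-x))\, u$ along the segment from $x$ to $y$, parametrized by $t \in [0,1]$. Differentiating, $\phi'(t) = \nabla^3 f(x + t(y-x))[u,u,\,y-x]$, and the QSC hypothesis gives the pointwise bound
\[
\abs{\phi'(t)} \le M\,\norm{y-x}\,\norm{u}_{\hess f(x+t(y-x))}^2 = M\,\norm{y-x}\,\phi(t) \le Mr\,\phi(t).
\]
Since $\phi(t) \ge 0$ (Hessians are PSD), this is a Gr\"onwall-type differential inequality: $\frac{d}{dt}\log\phi(t) \le Mr$ wherever $\phi(t) > 0$, so integrating from $0$ to $1$ yields $\phi(1) \le e^{Mr}\phi(0)$, i.e., $u^\top \hess f(y)\,u \le e^{Mr}\, u^\top \hess f(x)\, u$. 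As $u$ was arbitrary, $\hess f(y) \preceq e^{Mr}\hess f(x)$, and swapping the roles of $x$ and $y$ gives $c^{-1}\hess f(y) \preceq \hess f(x)$ with $c = e^{Mr}$.

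A small amount of care is needed to handle the case $\phi(0) = 0$ (or $\phi$ vanishing somewhere on the segment), since then $\log\phi$ is not defined and one cannot directly divide by $\phi$. I would handle this either by the standard regularization trick — apply the argument to $\phi(t) + \varepsilon$, which still satisfies $\abs{(\phi+\varepsilon)'} = \abs{\phi'} \le Mr\,\phi \le Mr(\phi+\varepsilon)$, obtain $\phi(1) + \varepsilon \le e^{Mr}(\phi(0)+\varepsilon)$, and let $\varepsilon \downarrow 0$ — or by noting that if $\phi$ hits $0$ at some $t_0$ then $\abs{\phi'(t_0)} \le Mr\,\phi(t_0) = 0$, and a continuity/connectedness argument shows $\phi \equiv 0$ on the segment, making the desired inequality trivial.

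The only genuine subtlety is the regularity required to differentiate $\phi$ and invoke the fundamental theorem of calculus: this is exactly why the definition assumes $f$ is thrice-differentiable, so $\phi$ is $C^1$ on $[0,1]$ and the integration step is justified. I do not anticipate a real obstacle here; the main point to get right is the $\varepsilon$-regularization (or degenerate-case) bookkeeping so that the Gr\"onwall step is valid even when the Hessian is only PSD rather than PD, which matters since our applications work with seminorms and possibly singular $\mm$.
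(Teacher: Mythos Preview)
Your proposal is correct and follows essentially the same approach as the paper: parametrize the segment $x_t = x + t(y-x)$, differentiate $\phi(t) = u^\top \hess f(x_t)\,u$, apply the QSC bound to get $|\phi'(t)| \le M\norm{y-x}\,\phi(t)$, and integrate the resulting logarithmic derivative. The paper's proof is in fact terser than yours---it simply writes $\bigl|\log\norm{u}_{\hess f(y)}^2 - \log\norm{u}_{\hess f(x)}^2\bigr| = \bigl|\int_0^1 \phi'(t)/\phi(t)\,dt\bigr| \le M\norm{y-x}$ without commenting on the possibility that $\phi$ vanishes---so your $\varepsilon$-regularization discussion is, if anything, more careful than the original.
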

\noindent
For completeness, we provide a proof in Appendix~\ref{app:qsc}. 

\section{Applications}

\label{sec:applications}

Algorithm~\ref{alg:ms-bacon} puts together the ingredients of the previous 
section to give a complete second-order method for minimizing QSC 
functions. 
In this section, we apply it to functions of the form $f(x) = 
g(\ma x)$ for a matrix $\ma \in \R^{n\times d}$ and function $g: \R^n 
\rightarrow \R$. The logistic loss function, the softmax 
approximation of the $\ell_\infty$ regression objective, and variations of 
$\ell_p$ regression objectives, all have this form. The following complexity 
guarantee for Algorithm~\ref{alg:ms-bacon} follows directly from our 
previous developments and we defer its proof to 
Appendix~\ref{app:application}.

\begin{algorithm}
	\caption{Monteiro-Svaiter Accelerated BAll COnstrained Newton's method 
	($\texttt{MS-BACON}$)}\label{alg:ms-bacon}
	\begin{algorithmic}[1]
		\STATE{\textbf{Input:} Function $f:\R^d \to \R$, desired accuracy 
		$\epsilon$, initial point $x_0$, initial suboptimality $\eps_0$.}
		\STATE{\textbf{Input:} Domain bound $R$, quasi-self-concordance 
		 $M$, smoothness  $L$, norm $\norm{\cdot}_{\mm}$.}
		\STATE{Define $\tilde{f}(x) = f(x) + \frac{\eps}{55R^2}\normm{x - 
		x_0}^2$}
		\STATE{Using Algorithm~\ref{alg:nesterov_h}, implement 
		$\oracleBall$, a $(\delta , \frac{1}{M})$-ball optimization oracle for 
		$\tilde{f}$, where $\delta = \Theta(\frac{\epsilon}{LR})$}
		\STATE{Using Algorithm~\ref{alg:line-search-oracle} and 
		$\oracleBall$, implement $\oracleMS$, a $\frac{1}{2}$-MS oracle for 
		$\tilde{f}$}
		\STATE{Using $O((RM)^{2/3}\log\frac{\eps_0}{\eps})$ iterations of 
		Algorithm~\ref{alg:msaccel_framework} with $\oracleMS$ 
		and initial point $x_0$ compute $x_{\mathrm{out}}$, an 
		$\epsilon/2$-accurate minimizer of $\tilde{f}$}
		\RETURN{$x_{\mathrm{out}}$}
	\end{algorithmic}
\end{algorithm}

\begin{restatable}{cor}{citethis}
\label{corr:citethis}
Let $f(x) = g(\ma x)$, for  $g: \R^n \rightarrow \R$ that is $L$-smooth, 
$M$-QSC  in the 
$\ell_2$ norm, and $\ma \in \R^{n \times d}$. Let $x^*$ be a minimizer of 
$f$, and suppose that
$\normm{x_0 - x^*} \le R$ and $f(x_0) - f(x^*) \le \eps_0$ 
for some $x_0 \in \R^d$, where $\mm \defeq \ma^\top\ma$. 
Then, Algorithm~\ref{alg:ms-bacon} 
yields an $\eps$-approximate minimizer to $f$ in
\[O\left(\left(RM\right)^{2/3}\log\left(\frac{\eps_0}{\eps}\right)\log^3\left(\frac{LR^2}{\eps}(1 + RM)\right)\right)\]
linear system solves in matrices of the form
\[\ma^\top\left(\nabla^2 g(\ma x) + \lam \mI\right)\ma
~~\text{for $\lambda \ge 0$ and $x \in \R^d$.}\]

\end{restatable}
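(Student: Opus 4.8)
The plan is to assemble \Cref{corr:citethis} by tracking the parameters of $\tilde f$ through the three machinery layers — Hessian-stable ball oracle (\Cref{thm:ballopt}), MS-oracle implementation (\Cref{prop:line_search}), and MS acceleration (\Cref{thm:mainclaim-accel} / \Cref{prop:ball_constrained_error_bound}) — and then multiplying the per-iteration linear-system cost by the iteration count. First I would record the relevant structural facts about $\tilde f(x) = g(\ma x) + \tfrac{\eps}{55R^2}\normm{x-x_0}^2$ in the seminorm $\normm{\cdot}$ with $\mm = \ma^\top \ma$: since $g$ is $M$-QSC in $\ell_2$, the composition $g(\ma x)$ is $M$-QSC in $\normm{\cdot}$ (the chain rule turns $\nabla^3 g(\ma x)[\ma u, \ma u, \ma h]$ into the required bound with $\normm{u}^2_{\nabla^2(g\circ\ma)}$), and adding a quadratic in $\normm{\cdot}$ preserves QSC while making $\tilde f$ strongly convex: $\mu \defeq \tfrac{\eps}{\Theta(R^2)}$ relative to $\mm$. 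By \Cref{lem:quasi-to-stable}, $\tilde f$ is $(r, e^{Mr})$-Hessian stable, so taking $r = 1/M$ gives $c = e = \Theta(1)$ — this is why the ball radius is chosen as $1/M$ and why all the $c$'s collapse to constants. Smoothness of $\tilde f$ in $\normm{\cdot}$ is $L' = L + \mu \le 2L$ (using $\eps < 2LR^2$-type assumptions, which are WLOG), so $\kappa = L'/\mu = \Theta(LR^2/\eps)$.

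Next I would invoke \Cref{thm:ballopt} with these parameters: \Cref{alg:nesterov_h} implements a $(\delta, 1/M)$-ball oracle for $\tilde f$ at any query point within distance $D_{\mathrm{ball}}$ of the minimizer of $\tilde f$, using $O(c \log^2(\kappa (D_{\mathrm{ball}} + r) c / \delta)) = O(\log^2(\frac{LR^2}{\eps} \cdot \frac{RM+1}{\delta}))$ linear system solves, each in a matrix $\mh + \lambda \mm = \nabla^2(g\circ\ma)(\bx) + \mu\mm + \lambda\mm = \ma^\top(\nabla^2 g(\ma\bx) + (\lambda')\mI)\ma$ — exactly the advertised form. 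Here I must check that the query points arising in \Cref{alg:msaccel_framework} stay within a bounded ball of $\tilde x^*$ (the minimizer of $\tilde f$); this follows from the standard MS potential argument (the $v_k$ and $x_k$ remain within $O(R)$ of $x^*$, hence of $\tilde x^*$ since $\normm{x^* - \tilde x^*} = O(R)$ by the regularizer being small), which is presumably exactly what \Cref{thm:mainclaim-accel}'s choice $D = \sqrt{18}R$ encodes. Then \Cref{prop:line_search} turns this ball oracle into a $\tfrac12$-MS oracle for $\tilde f$ with $O(\log(LD^2/\eps))$ ball-oracle calls per MS-oracle call, provided $\delta$ matches the requirement $\delta = \frac{r}{12(1+L'u)}$ with $u = \Theta((D+r)r/\eps)$; chasing the constants, $\delta = \Theta(\eps/(LR))$ as stated (after absorbing $r = 1/M \le O(R)$ — which needs $RM \ge 1$, WLOG — into the bound), and I would cite \Cref{thm:mainclaim-accel} directly rather than re-derive, since it already packages \Cref{prop:line_search} with \Cref{prop:ball_constrained_error_bound}.

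Finally I would apply \Cref{thm:mainclaim-accel} to $\tilde f$: with $r = 1/M$, $R$ the domain bound, and $\eps/2$ target accuracy, it produces an $\eps/2$-approximate minimizer of $\tilde f$ in $O((RM)^{2/3} \log(\eps_0/\eps) \log(LR^2 M^2/\eps))$ calls to $\oracleBall$. Since $\tilde f(x) - \tilde f(\tilde x^*) \le \eps/2$ together with $\normm{x-x_0}^2 \le O(R^2)$ implies $f(x) - f(x^*) \le \eps/2 + \tfrac{\eps}{55R^2}\cdot O(R^2) \le \eps$ (this is the standard regularization-transfer step, and dictates the constant $55$), the output is genuinely $\eps$-accurate for $f$. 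Multiplying the three costs — $O((RM)^{2/3}\log\frac{\eps_0}{\eps})$ MS iterations, times $O(\log\frac{LR^2}{\eps})$ ball calls per MS call from \Cref{prop:line_search}, times $O(\log^2(\frac{LR^2}{\eps}(1+RM)))$ linear solves per ball call from \Cref{thm:ballopt} — and coarsening all the logarithmic factors into a single $\log^3(\frac{LR^2}{\eps}(1+RM))$ gives the claimed bound. The main obstacle, and the place I would be most careful, is the bookkeeping of the interlocking parameter choices: verifying that the $\delta$ demanded by \Cref{prop:line_search} (which depends on $L'$, $D$, $r$, $\eps$) is consistent with the $\delta = \Theta(\eps/(LR))$ that \Cref{thm:ballopt} can deliver cheaply, that the strong-convexity parameter $\mu = \Theta(\eps/R^2)$ fed to \Cref{thm:ballopt} is the one induced by the regularizer, and that the distance bounds $D$ on the MS-oracle queries propagate correctly into the ball-oracle's $D$ — these are all routine but must line up for the final log-factor arithmetic to close; everything else is a direct citation of the stated results.
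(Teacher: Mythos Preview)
Your proposal is correct and follows essentially the same approach as the paper: establish that $\tilde f$ is $(1/M,e)$-Hessian stable and $\Theta(\eps/R^2)$-strongly convex in $\normm{\cdot}$, feed this into \Cref{thm:ballopt} to get the ball oracle, then into \Cref{thm:mainclaim-accel} for the outer iteration count, and multiply out the three log factors. The only substantive difference is bookkeeping: the paper argues Hessian stability is preserved under adding a quadratic (its Lemma~\ref{lem:morestable}) rather than QSC (both are immediate since the third derivative is unchanged), and it bounds $\normm{x_0-\tilde x^*}\le R$ directly via its Lemma~\ref{lem:dist_falls} (the regularized minimizer moves toward the regularizer's center) rather than your triangle-inequality detour through $x^*$---your ``regularizer is small'' intuition is not quite the mechanism, but the elementary inequality $\tilde f(\tilde x^*)\le\tilde f(x^*)$ combined with $f(\tilde x^*)\ge f(x^*)$ gives the same bound anyway.
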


\subsection{Logistic regression}
Consider \emph{logistic regression} with a
data matrix $\ma\in \mathbb{R}^{n\times d}$ with $n$ data points of dimension $d$, and corresponding labels $b\in 
\{-1,1\}^{n}$. %
The objective is%
\begin{equation}
\label{eqn:logistic}
f(x)= \sum_{i\in[n]}\log(1+\exp(-b_{i}\langle a_{i},x\rangle)) = g(\ma x),
\end{equation}
where $g(y) = \sum_{i \in [n]} \log(1 + \exp(-b_i y_i))$. It is known \citep{bach2010} that $g$ is 1-QSC and 1-smooth in $\ell_2$, with a diagonal Hessian. Thus, we have the following convergence guarantee from Corollary~\ref{corr:citethis}.
\begin{cor}
For the logistic regression objective in \eqref{eqn:logistic}, given $x_0$ 
with initial function error $\eps_0$ and distance $R$ away from a 
minimizer in $\norm{\cdot}_{\ma^\top\ma}$, 
Algorithm~\ref{alg:ms-bacon} obtains an $\eps$-approximate minimizer 
using
\[O\left(R^{2/3}\log\left(\frac{\eps_0}{\eps}\right)\log^3\left(\frac{R^2}{\eps}(1 + R)\right)\right)\]
linear system solves in matrices $\ma^\top\md\ma$ for diagonal $\md$.
\end{cor}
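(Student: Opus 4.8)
The plan is to invoke Corollary~\ref{corr:citethis} directly, since the logistic regression objective~\eqref{eqn:logistic} is manifestly of the form $f(x) = g(\ma x)$ with $g(y) = \sum_{i\in[n]}\log(1+\exp(-b_i y_i))$. First I would observe that $g$ decomposes coordinatewise, so its Hessian is the diagonal matrix $\nabla^2 g(y) = \diag\prn*{\tfrac{\exp(-b_i y_i)}{(1+\exp(-b_i y_i))^2}}_{i\in[n]}$; the claimed facts that $g$ is $1$-QSC and $1$-smooth in $\ell_2$ are classical (see~\cite{bach2010}) and follow from a one-variable computation on $t\mapsto \log(1+e^{-t})$, whose second derivative is bounded by $1/4 \le 1$ and whose third derivative obeys $|h'''(t)| \le h''(t)$.

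With $L = 1$ and $M = 1$ in hand, the corollary applies with $\mm = \ma^\top\ma$, yielding an $\eps$-approximate minimizer in
\[
O\left(\left(R\cdot 1\right)^{2/3}\log\left(\frac{\eps_0}{\eps}\right)\log^3\left(\frac{L R^2}{\eps}(1 + R\cdot 1)\right)\right)
= O\left(R^{2/3}\log\left(\frac{\eps_0}{\eps}\right)\log^3\left(\frac{R^2}{\eps}(1 + R)\right)\right)
\]
linear system solves in matrices of the form $\ma^\top\prn*{\nabla^2 g(\ma x) + \lam\mI}\ma$ for $\lambda \ge 0$. Since $\nabla^2 g(\ma x)$ is diagonal and $\lambda\mI$ is diagonal, their sum is a diagonal matrix $\md = \md(x,\lambda) \succeq 0$, so each such system has the form $\ma^\top\md\ma$ for diagonal $\md$, matching the statement.

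This is essentially a substitution argument, so there is no real obstacle — the only thing to be careful about is confirming the QSC and smoothness constants for $g$ (and noting they hold in $\ell_2$, as the corollary requires), together with tracking that $\nabla^2 g + \lambda\mI$ remains diagonal so the advertised linear-system form is correct. Everything else is a direct specialization of Corollary~\ref{corr:citethis} with $M = L = 1$.
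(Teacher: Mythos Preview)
Your proposal is correct and follows essentially the same approach as the paper: both simply invoke Corollary~\ref{corr:citethis} with $L=M=1$, citing the known facts (from~\cite{bach2010}) that the logistic loss $g$ is $1$-smooth and $1$-QSC in $\ell_2$ with diagonal Hessian, and then observe that $\nabla^2 g(\ma x)+\lambda\mI$ is diagonal so the linear systems have the form $\ma^\top\md\ma$.
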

Compared to \citet{karimireddy2018global}, which gives a trust region 
Newton method using $\otilde(R)$ linear system solves, we obtain an 
improved dependence on the domain size $R$.

\subsection{$\ell_\infty$ regression}

Consider \emph{$\ell_\infty$ regression} in the matrix $\ma \in \R^{n\times d}$ and vector $b \in\R^n$, which asks to minimize the objective
\begin{equation}\label{eq:linfdef}f(x) = \norm{\ma x-b}_{\infty} = g(\ma x),\end{equation}
where $g(y) = \norm{y - b}_{\infty}$. Without loss of generality (by concatenating $\ma$, $b$ with $-\ma$, $-b$), we may replace the $\norm{\cdot}_\infty$ in the objective with a maximum. It is well-known that $g(y)$ is approximated within additive $\eps/2$ by $\lse_t(y - b)$ for $t = \eps/2\log n$ (see Lemma~\ref{lem:lseapprox} for a proof), where 
\[\lse(x)\defeq\log\left(\sum_{i\in[n]}\exp(x_{i})\right),\; \lse_t(x) \defeq t\lse(x/t).\]

Our improvement stems from the fact that $\lse_t$ is QSC which to the best of our knowledge was previously unknown. The proof consists of careful manipulation of the third derivative tensor of $\lse_t$ and is deferred to Appendix~\ref{app:application}.
\begin{lem}
\label{lem:softmax_stable}
$\lse_t$ is $1/t$-smooth and $2/t$-QSC in $\ell_{\infty}$.
\end{lem}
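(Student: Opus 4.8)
The plan is to verify the two claimed properties of $\lse_t$ directly by computing its derivatives. Since $\lse_t(x) = t\,\lse(x/t)$, the chain rule gives $\nabla^2 \lse_t(x) = \frac{1}{t}\nabla^2\lse(x/t)$ and $\nabla^3\lse_t(x) = \frac{1}{t^2}\nabla^3\lse(x/t)$, so everything reduces to bounds on the derivatives of the plain log-sum-exp function $\lse$. Writing $p = p(y) \in \simplex^n$ for the softmax probabilities $p_i = e^{y_i}/\sum_j e^{y_j}$, the standard identities are $\nabla\lse(y) = p$, $\nabla^2\lse(y) = \diag(p) - pp^\top$, and for the third derivative, for directions $u,u,h$,
\[
\nabla^3\lse(y)[u,u,h] = \sum_i p_i u_i^2 h_i - 2\Big(\sum_i p_i u_i h_i\Big)\Big(\sum_i p_i u_i\Big) - \Big(\sum_i p_i u_i^2\Big)\Big(\sum_i p_i h_i\Big) + 2\Big(\sum_i p_i u_i\Big)^2\Big(\sum_i p_i h_i\Big).
\]

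For smoothness: $\nabla^2\lse(y) \preceq \diag(p) \preceq \mI$ since $pp^\top \succeq 0$ and $p_i \le 1$. Hence $\nabla^2\lse_t(x) \preceq \frac{1}{t}\mI$, i.e., $\lse_t$ is $(1/t)$-smooth in $\ell_2$, which in particular implies $(1/t)$-smoothness in $\ell_\infty$ via $\norm{\cdot}_2 \ge \norm{\cdot}_\infty$ (one should state smoothness in the norm the corollary uses, matching Lemma~\ref{lem:softmax_stable}). For the QSC bound, I need $|\nabla^3\lse_t(x)[u,u,h]| \le \frac{2}{t}\norm{h}_\infty \norm{u}_{\nabla^2\lse_t(x)}^2$, which after the scaling reduction is equivalent to showing
\[
\big|\nabla^3\lse(y)[u,u,h]\big| \le 2\,\norm{h}_\infty \cdot u^\top(\diag(p) - pp^\top)u.
\]

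The key step is to control the four terms in the expansion of $\nabla^3\lse(y)[u,u,h]$ in terms of $\norm{h}_\infty$ and the "variance" quantity $V \defeq u^\top(\diag(p)-pp^\top)u = \sum_i p_i u_i^2 - (\sum_i p_i u_i)^2 = \mathrm{Var}_p(u)$. Let $\bar u = \sum_i p_i u_i = \E_p[u]$ and $\bar h = \E_p[h]$, so that the expression is a centered cubic moment; a cleaner route is to first observe the identity $\nabla^3\lse(y)[u,u,h] = \E_p[(u-\bar u)^2(h - \bar h)] - 2\bar u\,\E_p[(u - \bar u)(h-\bar h)]$, obtained by expanding and using $\E_p[u-\bar u]=0$ — actually, writing $\tilde u = u - \bar u$, a short computation shows $\nabla^3\lse(y)[u,u,h] = \E_p[\tilde u^2 h] - \E_p[\tilde u^2]\E_p[h] - 2\bar u \E_p[\tilde u h]$, and since $\E_p[h]$ and $\bar u$ can be replaced freely, the invariant form is $\E_p[\tilde u^2 \tilde h] - 2\bar u\,\E_p[\tilde u \tilde h]$ where $\tilde h = h - \bar h$. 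Then $|\E_p[\tilde u^2 \tilde h]| \le \norm{\tilde h}_\infty \E_p[\tilde u^2] = \norm{\tilde h}_\infty V \le 2\norm{h}_\infty V$ is too lossy by itself; the cleaner bound uses $\norm{\tilde h}_\infty \le 2\norm{h}_\infty$ only on one term and Cauchy–Schwarz $|\E_p[\tilde u \tilde h]| \le \norm{\tilde h}_\infty \E_p[|\tilde u|] \le \norm{\tilde h}_\infty \sqrt{V}$ on the other, combined with $|\bar u| \le$ something — this is where care is needed, and indeed I expect the main obstacle to be getting the constant exactly $2$ rather than a larger absolute constant. The trick (matching the paper's claim) is to bound $|\E_p[\tilde u^2 \tilde h] - 2\bar u\E_p[\tilde u\tilde h]|$ by reorganizing as a single weighted sum $\big|\sum_i p_i \tilde u_i(\tilde u_i - 2\bar u)\tilde h_i\big| \le \norm{\tilde h}_\infty \sum_i p_i |\tilde u_i||\tilde u_i - 2\bar u|$ and then showing $\sum_i p_i|\tilde u_i||\tilde u_i - 2\bar u| \le V$ when $\norm{\tilde h}_\infty$ is replaced by $\norm{h}_\infty$ via a more careful split — or alternatively absorbing the factor of $2$ precisely because $\norm{\tilde h}_\infty \le 2\norm h_\infty$ and $\sum_i p_i |\tilde u_i| |\tilde u_i - 2\bar u| \le \sum_i p_i \tilde u_i^2 + $ (a term that vanishes by centering) $= V$. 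I would carry out this reorganization carefully, double-checking the sign and centering identities, since that is the one genuinely delicate computation; the rest is bookkeeping with the chain rule and the factor $1/t^2$ versus $1/t$.
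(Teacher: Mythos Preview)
Your proposal has two genuine gaps.

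\textbf{Smoothness.} The implication ``$L$-smooth in $\ell_2$ implies $L$-smooth in $\ell_\infty$ via $\|\cdot\|_2 \ge \|\cdot\|_\infty$'' is backwards. Smoothness in a norm $\|\cdot\|$ means $u^\top\nabla^2 f\,u \le L\|u\|^2$; since $\|u\|_2^2 \ge \|u\|_\infty^2$, an $\ell_2$ bound does \emph{not} yield an $\ell_\infty$ one with the same constant. The fix is immediate: use that $p$ is a probability vector to get $u^\top\diag(p)u = \sum_i p_i u_i^2 \le \|u\|_\infty^2$ directly, which is what the paper does.

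\textbf{QSC.} Your centering identity is wrong. A direct expansion shows
\[
\nabla^3\lse(y)[u,u,h] \;=\; \E_p\bigl[(u-\bar u)^2(h-\bar h)\bigr]
\]
with no additional $-2\bar u\,\E_p[\tilde u\tilde h]$ term (verify it by expanding $(u-\bar u)^2(h-\bar h)$ and matching the four terms of your stated third-derivative formula). With the correct identity the bound you dismissed as ``too lossy'' is in fact exactly right:
\[
\bigl|\E_p[\tilde u^2\tilde h]\bigr| \le \|\tilde h\|_\infty\,\E_p[\tilde u^2] \le \bigl(\|h\|_\infty + |\bar h|\bigr)\,V \le 2\|h\|_\infty\,V,
\]
giving the constant $2$ on the nose. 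The spurious covariance term is what sent you down the rabbit hole of reorganizing $\sum_i p_i|\tilde u_i||\tilde u_i - 2\bar u|$, which is not needed.

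For comparison, the paper does \emph{not} use this centering. It instead rewrites $\nabla^3\lse[u,u,h]$ algebraically as $u^\top H\bigl(\mU - (g^\top u)\mI\bigr)h - (u^\top H u)(g^\top h)$ with $H=\nabla^2\lse$, $\mU=\diag(u)$, then applies Cauchy--Schwarz in the $H$-inner product on the first piece and bounds $\|(\mU - (g^\top u)\mI)h\|_H^2 \le \|h\|_\infty^2\|u\|_H^2$ via $H\preceq\diag(g)$. Your probabilistic route, once the identity is corrected, is actually shorter and more transparent than the paper's; the paper's advantage is that it never needs to recognize the third-cumulant identity, working purely with quadratic-form manipulations.
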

Lemma~\ref{lem:softmax_stable} immediately implies that $\lse_t$ is 
$n/t$-smooth and $2/t$-QSC in $\ell_2$, as for all $y\in\R^n$, 
$\norm{y}_\infty \leq \norm{y}_2 \le \sqrt{n}\norm{y}_\infty$, which clearly still holds under 
linear shifts by $b$. We thus obtain the following by applying 
Corollary~\ref{corr:citethis} to the $\lse_{\eps/2}$ objective, and solving to 
$\eps/2$ additive accuracy.
\begin{cor}
\label{cor:softmax}
For the $\ell_\infty$ regression objective in \eqref{eq:linfdef}, given $x_0$ 
with initial function error $\eps_0$ and $R$ away from a minimizer in 
$\norm{\cdot}_{\ma^\top\ma}$, Algorithm~\ref{alg:ms-bacon} obtains an 
$\eps$-approximate minimizer using
\[O\left(\left(\frac{R\log n}{\eps}\right)^{2/3}\log\left(\frac{\eps_0}{\eps}\right)\log^3\left(\frac{nR}{\eps}\right)\right)\]
linear system solves in matrices $\mha^{\top}(\mh+\lambda\mI)\mha$, where $\mh$ is a scaled Hessian of the $\lse$ function, $\lam\ge0$, and $\mha$ is the concatenation of $\ma$ and $-\ma$.
\end{cor}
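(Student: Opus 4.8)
The plan is to instantiate \Cref{corr:citethis} with the softmax objective $g = \lse_{\eps/2}(\cdot - b)$, track the parameter substitutions carefully, and pay attention to the fact that we only need to solve the smoothed problem to accuracy $\eps/2$ rather than $\eps$. First I would set up the reduction: by \Cref{lem:lseapprox} (referenced in the excerpt), $g(y) = \norm{y-b}_\infty$ is approximated within additive $\eps/2$ by $\tg(y) \defeq \lse_{\eps/2/\log n}(y-b)$ wherever we evaluate it, so any $\eps/2$-approximate minimizer of $x \mapsto \tg(\ma x)$ is an $\eps$-approximate minimizer of $f$. (Here I should double-check the exact constant: the excerpt says $\lse_t$ approximates within $\eps/2$ for $t = \eps/(2\log n)$, so the smoothing parameter is $t = \eps/(2\log n)$.) The shift by $b$ and the concatenation trick turning $\norm{\cdot}_\infty$ into a max do not affect smoothness or QSC constants, as the excerpt notes.

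Next I would read off the parameters for applying \Cref{corr:citethis} to $\tg$. By \Cref{lem:softmax_stable}, $\lse_t$ is $1/t$-smooth and $2/t$-QSC in $\ell_\infty$, hence (as the excerpt observes, via $\norm{y}_\infty \le \norm{y}_2 \le \sqrt n \norm{y}_\infty$) it is $n/t$-smooth and $2/t$-QSC in $\ell_2$. With $t = \eps/(2\log n)$ this gives smoothness $L = 2n\log n/\eps = \Theta(n\log n/\eps)$ and QSC parameter $M = 4\log n/\eps = \Theta(\log n / \eps)$. Feeding these into the iteration bound of \Cref{corr:citethis}, which is
\[
O\!\left(\left(RM\right)^{2/3}\log\!\left(\frac{\eps_0}{\eps}\right)\log^3\!\left(\frac{LR^2}{\eps}(1 + RM)\right)\right),
\]
the leading factor becomes $(RM)^{2/3} = \Theta((R\log n/\eps)^{2/3})$, matching the claimed bound. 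For the logarithmic factor, $LR^2/\eps = \Theta(nR^2\log n/\eps^2)$ and $1+RM = \Theta(1 + R\log n/\eps)$, so the argument of the $\log^3$ is polynomial in $n, R, 1/\eps$ and hence $\log^3(\cdot) = O(\log^3(nR/\eps))$ after absorbing constants and the $\log n$ factors (using that these are all polynomially related); I would be slightly careful here to confirm that no stray factor escapes the $\log^3(nR/\eps)$ form, but since everything inside is $\poly(n, R, 1/\eps)$ this is routine.

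Finally I would address the form of the linear systems. \Cref{corr:citethis} produces systems in matrices $\ma^\top(\nabla^2 g(\ma x) + \lambda \mI)\ma$; here $g = \tg = \lse_t(\cdot - b)$ composed with the concatenation, so $\nabla^2 g$ is (a scaling of) the $\lse$ Hessian, and with $\mha$ denoting the vertical stacking of $\ma$ and $-\ma$ and $b$ correspondingly, the systems take the stated form $\mha^\top(\mh + \lambda\mI)\mha$ with $\mh$ a scaled $\lse$ Hessian and $\lambda \ge 0$. Strictly one must also verify that $\tg \circ \mha$ is strictly convex (needed by the earlier framework) — this holds because $\lse_t$ is strictly convex on the subspace orthogonal to $\ones$ and the max-reformulation plus any regularization handles the rest; in any case Algorithm~\ref{alg:ms-bacon} adds the $\frac{\eps}{55R^2}\normm{x-x_0}^2$ term, which I should check is already accounted for inside \Cref{corr:citethis} (it is, since that corollary is stated directly for $f = g(\ma x)$ and the regularization is internal to Algorithm~\ref{alg:ms-bacon}). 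The main obstacle, such as it is, is purely bookkeeping: making sure the substitution $t = \eps/(2\log n)$ is threaded consistently through $L$, $M$, $\delta$, and $D = \Theta(R)$ so that every $\log n$ and constant lands inside the final $O(\cdot)$ without altering the advertised exponents $2/3$ on $R\log n/\eps$ and $3$ on $\log(nR/\eps)$.
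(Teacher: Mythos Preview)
Your proposal is correct and follows essentially the same route as the paper, which simply states that the result follows ``by applying Corollary~\ref{corr:citethis} to the $\lse_{\eps/2}$ objective, and solving to $\eps/2$ additive accuracy.'' Your write-up is in fact more careful than the paper's one-line justification: you explicitly track the smoothing parameter $t=\eps/(2\log n)$, the resulting $L=\Theta(n\log n/\eps)$ and $M=\Theta(\log n/\eps)$, and the concatenation $\mha$, all of which the paper leaves implicit.
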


Compared to \citet{bullins2019higher}, which obtains an 
$\epsilon$-approximate
solution to \eqref{eq:linfdef} in $\otilde((R/\epsilon)^{4/5})$ linear system solves using high-order acceleration, we obtain an improved dependence on $R/\epsilon$.

\subsection{$\ell_{p}$ regression}
\label{subsection:lp_reg}

Consider $\ell_p$ regression in the matrix $\ma \in \R^{n \times d}$ and vector $b \in \R^n$, which asks to minimize
\begin{equation}\label{eq:lpdef}
    f(x) = \norm{\ma x - b}_p^p = g(\ma x).
\end{equation}
for some fixed $p > 3$,\footnote{We assume $p > 3$ for ease of 
presentation; for $p \leq 4$ our runtime is superseded by, e.g., the 
algorithm 
of \cite{AdilPS19}.} where $g(x) = \sum_i \lvert x_i - b_i\rvert^p$. We refer 
to the optimal value of \eqref{eq:lpdef} by $f^*$, and its minimizer by 
$x^*$; we will solve \eqref{eq:lpdef} to $1+\delta$ multiplicative accuracy. 
By taking $p$th roots and solving to an appropriate lower accuracy level, 
this also recovers more standard formulations of minimizing $\norm{\ma x 
- b}_p$.

Prior work on this problem shows \eqref{eq:lpdef} can be 
minimized using fewer than the $O(n^{1/2})$ linear system solves that an 
interior point method would require: the state of the art algorithms of 
\citet{AS20, AdilKPS19} minimize $f$ to $1+\delta$ multiplicative 
accuracy by solving $\otilde\left(\min\left(pn^{1/3},p^{O(p)} 
n^{\frac{p-2}{3p-2}}\right) \log(1/\delta)\right)$ linear systems in 
 $\ma^\top \md \ma$ where $\md$ is a positive semidefinite
 diagonal matrix. In 
this section we provide an algorithm to minimize $g$ in $\otilde(p^{14/3} 
n^{1/3} \log^4(n/\delta))$ such systems. While our techniques do not 
improve on the state of the art, we believe our proof and algorithm are 
simpler than the previous work and of independent interest.

Algorithm~\ref{alg:lp_reg} summarizes our approach. It consists iteratively 
applying Algorithm~\ref{alg:ms-bacon} to the 
objective~\eqref{eq:lpdef} with exponentially shrinking target additive 
error. We initialize the algorithm at $x_0 = \argmin_x \norm{\ma x - b}_2$. 
Using the fact that $\norm{y}_2 \le n^{(p - 2)/2p}\norm{y}_p$ for all $y$ 
and $p$, the 
initialization satisfies
\begin{equation}\label{eq:initialgaplp}
\epsilon_0 \defeq 
\norm{\ma x_0 - b}_p^p \le r\norm{\ma x_0 - b}_2^p 
\le  \norm{\ma x^* - b}_2^p 
\le n^{(p-2)/2}f^*.
\end{equation}
The 
algorithm 
maintains the invariant
\[f(x_k) - f^*\le (2^{-p})^k \epsilon_0 \le (2^{-k}n)^p,\]
so that running $k=\log_2 \frac{n}{\delta^{1/p}}$ iterations guarantees 
multiplicative error of at most $\delta$\footnote{We note that $\log(n/\delta)$ iterations of our algorithm yield the stronger multiplicative accuracy guarantee of $\delta^p$, without an additional dependence on $p$.}.

Unlike the 
previous two applications, the function $g$ is \emph{not} QSC, as its 
Hessian is badly behaved near zero. Nevertheless we argue that an $\ell_2$ 
regularization of $g$ is QSC 
(Lemma~\ref{lem:lp_qsc}), and---because Algorithm~\ref{alg:ms-bacon} 
includes such regularization---the conclusion 
of the corollary still holds (Lemma~\ref{lem:progress}). The key to our analysis is showing that with each iteration the distance to 
the optimum $R$ shrinks (due to convergence to $x^*$) by the same factor 
that the QSC constant $M$ 
grows (due to diminishing regularization), such that $RM=O(p\sqrt{n})$ 
throughout, leading to the 
overall $\mathsf{poly}(p)n^{1/3}$ complexity guarantee.

\begin{algorithm}
\caption{High accuracy $\ell_p$ regression}
\begin{algorithmic}[1]
\label{alg:lp_reg}

\STATE{\textbf{Input:}  $\ma \in \R^{n \times d}, b \in \R^n$, 
multiplicative error tolerance $\delta \geq 0$.}
\STATE{Set $x_0 = \ma^\dagger b$ and $\epsilon_0=f(x_0) = \norm{\ma 
x_0 -b}_p^p$.}
\FOR{$k \le \log_2(n/\delta^{1/p})$}
\STATE{$\epsilon_k \gets 2^{-p} \epsilon_{k - 1}$}
\STATE{$x_k \gets $ output of Algorithm~\ref{alg:ms-bacon} applied on 
$f(x) = \norm{\ma x - b}_p^p$ with initialization $x_{k-1}$, desired 
accuracy $\epsilon_k$ and parameters 
$R=O(n^{{(p-2)}/{2p}}\epsilon_k^{{1}/{p}})$ 
and 
$M=O(p\sqrt{n}/R)$ 
(see Lemma~\ref{lem:progress})}\label{line:call-ms-bacon}
\ENDFOR
\end{algorithmic}
\end{algorithm}

We first bound the QSC of  $\ell_2$ regularization of $g$.

\begin{restatable}{lem}{lpqsc}\label{lem:lp_qsc}
For any $b \in \R^n$, $y \in \R^d$, $p \geq 3$, $\mu \geq 0$, the 
function $g(x) + \mu \norm{x-y}_2^2$ is $O(p 
\mu^{-1/(p-2)})$-QSC with respect to $\ell_2$.
\end{restatable}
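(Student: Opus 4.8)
The quantity to control is the third-derivative tensor of $h(x) \defeq g(x) + \mu\norm{x-y}_2^2 = \sum_i |x_i - b_i|^p + \mu\sum_i (x_i - y_i)^2$. Since $h$ is separable, everything reduces to a one-dimensional estimate: writing $\phi(s) = |s|^p + \mu' s^2$ after shifting coordinates (with $\mu'$ proportional to $\mu$, and noting the cross terms vanish by separability), I must show $|\phi'''(s)| \le M \,\phi''(s)$ for all $s$, with $M = O(p\,\mu^{-1/(p-2)})$. I would first record $\phi''(s) = p(p-1)|s|^{p-2} + 2\mu'$ and $\phi'''(s) = p(p-1)(p-2)|s|^{p-3}\operatorname{sgn}(s)$, so the claim becomes
\[
p(p-1)(p-2)|s|^{p-3} \le M\left(p(p-1)|s|^{p-2} + 2\mu'\right)\quad\text{for all }s.
\]

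The plan is to split on the magnitude of $|s|$ at a threshold $\tau$ to be chosen. For $|s|\ge\tau$, drop the $2\mu'$ term on the right and it suffices that $(p-2)|s|^{p-3}\le M|s|^{p-2}$, i.e. $M\ge (p-2)/|s|\le (p-2)/\tau$. For $|s|<\tau$, drop the $p(p-1)|s|^{p-2}$ term and it suffices that $p(p-1)(p-2)|s|^{p-3}\le 2M\mu'$; since $|s|^{p-3}$ is increasing on $[0,\tau]$ for $p>3$, the worst case is $|s|=\tau$, giving the requirement $M\ge p(p-1)(p-2)\tau^{p-3}/(2\mu')$. Balancing the two lower bounds on $M$ — i.e. setting $(p-2)/\tau \asymp p(p-1)(p-2)\tau^{p-3}/\mu'$ — yields $\tau^{p-2}\asymp \mu'/(p(p-1))$, hence $\tau\asymp (\mu'/p^2)^{1/(p-2)}$, and then $M \asymp (p-2)/\tau \asymp p\,(\mu')^{-1/(p-2)} \cdot \poly(p)$-type factors. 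Tracking constants through $\mu' = \Theta(\mu)$ gives the stated $O(p\,\mu^{-1/(p-2)})$ bound (any extra $p$-powers from the balancing are absorbed into the $O(\cdot)$, and in fact a more careful choice of $\tau$ keeps the leading factor linear in $p$).

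Two bookkeeping points remain. First, QSC is defined with respect to the Hessian seminorm, so I should phrase the bound as $|\nabla^3 h(x)[u,u,w]| \le M\norm{w}_2\norm{u}_{\nabla^2 h(x)}^2$ and verify that separability lets me reduce the tensor inequality to the coordinatewise scalar inequality above (using $\nabla^3 h(x)[u,u,w] = \sum_i \phi_i'''(x_i) u_i^2 w_i$, Cauchy–Schwarz on the $w$ factor, and $\nabla^2 h(x) = \diag(\phi_i''(x_i))$); this is routine. Second, I should note the shift-by-$y$ and the identification of the effective regularization constant $\mu'$ are harmless since translation does not change derivatives of order $\ge 1$. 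The only mild subtlety — the "main obstacle," though it is not deep — is getting the power of $p$ in the final constant to come out as claimed rather than as a larger polynomial; this just requires being slightly careful in the threshold balancing rather than taking the crudest split.
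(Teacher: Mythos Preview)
Your proposal is correct and essentially parallels the paper's proof: both reduce by separability to the scalar inequality
\[
p(p-1)(p-2)\,|s|^{p-3} \le M\bigl(p(p-1)|s|^{p-2} + 2\mu\bigr)
\]
and then verify it with $M = O(p\,\mu^{-1/(p-2)})$. The only difference is in how this scalar bound is established. You split at a threshold $\tau \asymp (\mu/p^2)^{1/(p-2)}$ and drop one term on the right in each regime; the paper instead writes $|s|^{p-3} = \bigl(|s|^{p-2}\bigr)^{(p-3)/(p-2)}$ and applies the elementary inequality $x^{\alpha}y^{1-\alpha}\le x+y$ with $x = p(p-1)|s|^{p-2}$ and $y = 2\mu$ (after absorbing constants). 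Both routes give a leading constant linear in $p$, since $(p(p-1))^{1/(p-2)} = O(1)$ for $p\ge 3$. Your threshold argument is slightly more hands-on; the paper's Young-type inequality is a one-liner but morally encodes the same balancing you perform explicitly.

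Two small remarks on your bookkeeping. First, there is no need to introduce a separate $\mu'$: the second derivative of $\mu(x_i - y_i)^2$ is exactly $2\mu$ regardless of $y_i$, so $\mu' = \mu$. Second, ``Cauchy--Schwarz on the $w$ factor'' is not quite the right tool; what you want is $\bigl|\sum_i \phi_i'''\,u_i^2 w_i\bigr| \le \norm{w}_\infty \sum_i |\phi_i'''|\,u_i^2 \le \norm{w}_2 \sum_i M\phi_i''\,u_i^2$, i.e.\ H\"older ($\ell_1$--$\ell_\infty$) followed by $\norm{\cdot}_\infty \le \norm{\cdot}_2$ and the coordinatewise bound. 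This is what the paper does as well.
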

\noindent
We next show approximate minimizers of $f$ are close to $x^*$.
\begin{restatable}{lem}{lpdist}
\label{lem:lp_dist}
For $x \in \R^d$ with $f(x) -f^* \le \epsilon$, we have $\norm{x - 
x^*}_\mm^p \leq 2^p n^{\frac{p-2}{2}} \epsilon$. 
\end{restatable}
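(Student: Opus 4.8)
The plan is to reduce the claim to a bound on the $\ell_p$ distance $\norm{\ma(x-x^*)}_p$, derive that bound from the uniform convexity of the $\ell_p^p$ loss together with the optimality of $x^*$, and then convert from $\ell_p$ to the $\mm$-seminorm via the elementary norm comparison already used earlier in this subsection. Throughout I would write $u \defeq \ma x - b$ and $v \defeq \ma x^* - b$, so that $f(x) = \norm{u}_p^p$, $f^* = \norm{v}_p^p$, $\tfrac{u+v}{2} = \ma\big(\tfrac{x+x^*}{2}\big) - b$, and $u - v = \ma(x-x^*)$.

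The first step is to record a uniform convexity (Clarkson-type) inequality: for $p \ge 2$ and all $a,b\in\R$ one has $\abs{a+b}^p + \abs{a-b}^p \le 2^{p-1}(\abs{a}^p + \abs{b}^p)$, and summing over coordinates yields, for all $u,v\in\R^n$,
\[
\norm*{\tfrac{u+v}{2}}_p^p + \frac{1}{2^p}\norm{u-v}_p^p \;\le\; \frac{\norm{u}_p^p + \norm{v}_p^p}{2}.
\]
The second step applies this with the $u,v$ above, using that $x^*$ is a global minimizer of $f$ so that $\norm*{\ma\big(\tfrac{x+x^*}{2}\big) - b}_p^p = f\big(\tfrac{x+x^*}{2}\big) \ge f^*$; rearranging and using $f(x) - f^* \le \epsilon$ gives
\[
\frac{1}{2^p}\norm{\ma(x-x^*)}_p^p \;\le\; \frac{f(x)+f^*}{2} - f\big(\tfrac{x+x^*}{2}\big) \;\le\; \frac{f(x)-f^*}{2} \;\le\; \frac{\epsilon}{2},
\]
so that $\norm{\ma(x-x^*)}_p^p \le 2^{p-1}\epsilon$.

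The third step passes to the $\mm$-seminorm. Since $\mm = \ma^\top\ma$ we have $\normm{x-x^*} = \norm{\ma(x-x^*)}_2$, and the comparison $\norm{w}_2 \le n^{(p-2)/(2p)}\norm{w}_p$ for $p \ge 2$ (the same fact used earlier in this subsection), raised to the $p$-th power, gives $\norm{w}_2^p \le n^{(p-2)/2}\norm{w}_p^p$. Combining the two previous displays,
\[
\normm{x-x^*}^p = \norm{\ma(x-x^*)}_2^p \le n^{(p-2)/2}\norm{\ma(x-x^*)}_p^p \le 2^{p-1} n^{(p-2)/2}\epsilon \le 2^p n^{(p-2)/2}\epsilon,
\]
which is the claim (in fact with the slightly sharper constant $2^{p-1}$ in place of $2^p$).

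The only non-routine ingredient is the scalar Clarkson inequality $\abs{a+b}^p + \abs{a-b}^p \le 2^{p-1}(\abs{a}^p + \abs{b}^p)$ for $p \ge 2$; this is classical, and a self-contained proof follows by normalizing $t = \abs{b}/\abs{a} \in [0,1]$ and verifying $(1+t)^p + (1-t)^p \le 2^{p-1}(1 + t^p)$ via a short one-variable convexity argument (the inequality is tight at $t = 1$, which is also where the overall constant $2^{p-1}$ is attained). Everything else is routine bookkeeping, so I do not anticipate any real obstacle beyond keeping track of the constants.
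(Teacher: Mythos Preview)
Your proof is correct and in fact yields the sharper constant $2^{p-1}$. The overall structure matches the paper's proof: bound $\norm{\ma(x-x^*)}_p^p$ using a uniform-convexity inequality for $\norm{\cdot}_p^p$, then pass to $\norm{\cdot}_\mm$ via the $\ell_2$--$\ell_p$ comparison. The difference is in which uniform-convexity inequality is invoked. The paper cites the first-order (Bregman-type) bound from \cite{AdilKPS19},
\[
\norm{y}_p^p + \langle \nabla \norm{y}_p^p, \Delta\rangle + \tfrac{p-1}{p\,2^p}\norm{\Delta}_p^p \le \norm{y+\Delta}_p^p,
\]
applied at $y=\ma x^*-b$, $\Delta=\ma(x-x^*)$, and uses $\nabla f(x^*)^\top(x-x^*)=0$; this yields $\norm{\ma(x-x^*)}_p^p \le \tfrac{p}{p-1}2^p\epsilon \le 2^{p+1}\epsilon$. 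You instead use the classical midpoint Clarkson inequality and only the zeroth-order fact $f\big(\tfrac{x+x^*}{2}\big)\ge f^*$, obtaining $\norm{\ma(x-x^*)}_p^p \le 2^{p-1}\epsilon$. Your route is slightly more elementary and self-contained (no external lemma), needs a weaker optimality condition, and gives a constant that is a factor of $4$ better; the paper's route has the advantage of reusing a lemma already available in the $\ell_p$-regression literature.
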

\noindent
Finally, we bound the complexity of executions of 
Line~\ref{line:call-ms-bacon}.

\begin{restatable}{lem}{lpprogress}
\label{lem:progress}
Let $\epsilon_{k-1}\geq \delta f^*$. Initialized at $x_{k - 1}$ satisfying 
$f(x_{k - 
1}) 
-f^* \leq \epsilon_{k-1}$, Algorithm~\ref{alg:ms-bacon} 
computes $x_{k}$ with $f(x_{k}) -f^* \leq 2^{-p}\epsilon_{k-1} = 
\epsilon_k$ in 
$O(p^{14/3} 
n^{1/3} \log^3(n/\delta))$ linear system solves in $\ma^\top \md \ma$ 
for 
diagonal matrix $\md \succeq 0$. 
\end{restatable}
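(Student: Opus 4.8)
The plan is to apply \Cref{corr:citethis}---or rather its proof, since here $g$ itself is not QSC but the $\ell_2$-regularization that \Cref{alg:ms-bacon} adds is---to the single invocation of \Cref{alg:ms-bacon} on Line~\ref{line:call-ms-bacon}, after pinning down the parameters $R$, $M$, and the effective smoothness. First I would bound the distance to the optimum: by \Cref{lem:lp_dist} with $\epsilon=\epsilon_{k-1}$, the hypothesis $f(x_{k-1})-f^*\le\epsilon_{k-1}$ yields $\normm{x_{k-1}-x^*}\le 2n^{(p-2)/2p}\epsilon_{k-1}^{1/p}$, so taking $R=O(n^{(p-2)/2p}\epsilon_{k-1}^{1/p})$ is valid (and matches the setting $R=O(n^{(p-2)/2p}\epsilon_k^{1/p})$ on Line~\ref{line:call-ms-bacon} since $\epsilon_{k-1}=2^p\epsilon_k$). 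On this input, \Cref{alg:ms-bacon} internally works with $\tilde f(x)=f(x)+\frac{\epsilon_k}{55R^2}\normm{x-x_{k-1}}^2$; as $\mm=\ma^\top\ma$ this equals $\tilde g(\ma x)$ with $\tilde g(z)=g(z)+\mu\ltwo{z-\ma x_{k-1}}^2$ and $\mu=\frac{\epsilon_k}{55R^2}$, and by \Cref{lem:lp_qsc} $\tilde g$ is $M$-QSC in $\ell_2$ with $M=O(p\,\mu^{-1/(p-2)})$.

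The crucial step is the substitution that makes $RM$ bounded independently of the (shrinking) accuracy level. Computing $\mu^{-1/(p-2)}=(55R^2/\epsilon_k)^{1/(p-2)}=O\big(2^{p/(p-2)}(n/\epsilon_{k-1})^{1/p}\big)$ and noting $2^{p/(p-2)}=O(1)$ for $p\ge3$ gives $M=O(p(n/\epsilon_{k-1})^{1/p})$, hence
\[ RM = O\big(n^{(p-2)/2p}\epsilon_{k-1}^{1/p}\cdot p\,(n/\epsilon_{k-1})^{1/p}\big) = O\big(p\,n^{(p-2)/2p+1/p}\big) = O(p\sqrt n), \]
i.e.\ the shrinkage of $R$ by convergence exactly cancels the growth of $M$ by diminishing regularization. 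The remaining preconditions are routine: $\tilde f$ is strictly convex and $\mu$-strongly convex in $\normm{\cdot}$, and by \Cref{lem:quasi-to-stable} with $r=1/M$ it is $(1/M,e)$-Hessian stable in $\normm{\cdot}$, so the stability constant is $O(1)$ and \Cref{thm:ballopt} applies; the minimizer $\tilde x^*$ of $\tilde f$ obeys $f(\tilde x^*)\le\tilde f(\tilde x^*)\le\tilde f(x^*)\le f^*+\mu R^2=f^*+\epsilon_k/55$, so \Cref{lem:lp_dist} again gives $\normm{\tilde x^*-x^*}=O(R)$ and hence a domain bound $\normm{x_{k-1}-\tilde x^*}=O(R)$; an $\epsilon_k/2$-approximate minimizer $x_k$ of $\tilde f$ obeys $f(x_k)\le\tilde f(x_k)\le\tilde f(\tilde x^*)+\epsilon_k/2\le f^*+\epsilon_k$, which is the claimed progress; and the initial $\tilde f$-suboptimality at $x_{k-1}$ is at most $f(x_{k-1})-f^*\le\epsilon_{k-1}=2^p\epsilon_k$, so the $\log(\epsilon_0/\epsilon)$ factor of \Cref{corr:citethis} contributes only $O(p)$.

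The main obstacle is handling the fact that $g$ is not globally smooth---its Hessian grows like $p(p-1)|z_i-b_i|^{p-2}$---so a global Lipschitz-gradient constant does not exist. I would argue that throughout this call all iterates stay at $f$-suboptimality $O(\epsilon_{k-1})$, so residuals stay bounded by $B=O((f^*+\epsilon_{k-1})^{1/p})=O(n^{(p-2)/2p}(f^*)^{1/p})$ (using $\epsilon_{k-1}\le\epsilon_0\le n^{(p-2)/2}f^*$), and that a local smoothness $L=O(p^2B^{p-2})$ therefore suffices inside the analysis. Plugging in $\mu=\Theta\big(2^{-p}n^{-(p-2)/p}\epsilon_{k-1}^{(p-2)/p}\big)$ and the hypothesis $\epsilon_{k-1}\ge\delta f^*$ gives a condition number $\kappa=L/\mu=O(2^pp^2n^{(p-2)/2}/\delta)$, so every logarithm in \Cref{corr:citethis}---whose argument is $O(\kappa\cdot RM)$---is $O(p\log(n/\delta))$ and the $\log^3$ factor is $O(p^3\log^3(n/\delta))$. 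Multiplying $(RM)^{2/3}=O(p^{2/3}n^{1/3})$ by the $O(p)$ from $\log(\epsilon_0/\epsilon)$ and the $O(p^3\log^3(n/\delta))$ from the cubic logarithm yields the claimed $O(p^{14/3}n^{1/3}\log^3(n/\delta))$ linear system solves; by \Cref{thm:ballopt} and \Cref{alg:nesterov_h} each is in a matrix of the form $\ma^\top\md\ma$ with $\md\succeq0$ diagonal (namely $\md=\nabla^2 g(\ma x)+\lambda\mI$ for some $\lambda\ge0$), which is the stated claim.
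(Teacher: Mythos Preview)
Your argument tracks the paper's proof closely and the arithmetic is right, but there is a genuine gap in the smoothness bound. You assert that ``throughout this call all iterates stay at $f$-suboptimality $O(\epsilon_{k-1})$'' and use this to bound the residuals by $B=O((f^*+\epsilon_{k-1})^{1/p})$. The MS scheme does not give you this: while the potential bound controls $f(x_j)-f(\tilde x^*)\le p_0/A_j$, the points at which you actually need smoothness are the centers $y_{t_\lambda}$ in the binary search of \Cref{alg:line-search-oracle} (arbitrary convex combinations of $x_j$ and $v_j$) and the points queried inside the ball oracle. These carry no function-value guarantee---$v_j$ in particular has only a distance bound, not a suboptimality bound.

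The fix is exactly what the paper does: invoke \Cref{lem:domain_bound} to get $\normm{x-\tilde x^*}\le O(R)$ for every point $x$ encountered, and then bound the residual by the triangle inequality,
\[
\norm{\ma x-b}_\infty \le \norm{\ma x^*-b}_\infty + \norm{\ma(x-x^*)}_2 \le (f^*)^{1/p}+O(R),
\]
using $f^*\le \epsilon_{k-1}/\delta$ to control the first term. This yields $L=O\big(p^2((f^*)^{1/p}+R)^{p-2}\big)$, and after the same substitution $R^p=\Theta(2^p n^{(p-2)/2}\epsilon_{k-1})$ you recover $\log(LR^2/\epsilon_k\cdot(1+RM))=O(p\log(n/\delta))$ and the stated $O(p^{14/3}n^{1/3}\log^3(n/\delta))$ count. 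So your final bound is correct, but the route to it needs the distance bound rather than an (unavailable) function-value bound.
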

\noindent
We defer proofs of these statements to  Appendix~\ref{ssec:lpproofs}. Our 
final runtime follows from Lemma~\ref{lem:progress} and the fact that the 
loop in Algorithm~\ref{alg:lp_reg} repeats $O(\log\frac{n}{\delta})$ times.
\begin{cor}
Algorithm~\ref{alg:lp_reg} computes $x \in \R^d$ with
\[
\norm{\ma x - b}_p^p \leq (1+\delta) \norm{\ma x^* - b}_p^p 
\]
using $O(p^{14/3} n^{1/3} \log^4(n/\delta))$ linear system solves in 
$\ma^\top \md \ma$ for diagonal matrix $\md \succeq 0$. 
\end{cor}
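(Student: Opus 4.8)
The plan is to derive the corollary by combining the per-iteration progress bound of Lemma~\ref{lem:progress} with a count of the outer iterations run by Algorithm~\ref{alg:lp_reg}. The two global facts I would invoke are the initial-gap bound $\epsilon_0 = \norm{\ma x_0 - b}_p^p \le n^{(p-2)/2} f^*$ from~\eqref{eq:initialgaplp} (which follows from $\norm{y}_p \le \norm{y}_2 \le n^{(p-2)/2p}\norm{y}_p$ together with optimality of $x_0 = \ma^\dagger b$ for the $\ell_2$ problem) and the observation that the parameters $R$ and $M$ passed to Algorithm~\ref{alg:ms-bacon} in Line~\ref{line:call-ms-bacon} are precisely those under which Lemma~\ref{lem:progress} is stated: by Lemma~\ref{lem:lp_dist}, $\normm{x_{k-1} - x^*} = O(n^{(p-2)/2p}\epsilon_k^{1/p})$ so that $R$ is a valid domain bound, and $M = O(p\sqrt{n}/R)$ is scaled so that $RM = O(p\sqrt{n})$ throughout.

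First I would establish, by induction on $k$, the invariant $f(x_k) - f^* \le \epsilon_k \defeq 2^{-pk}\epsilon_0$. The base case $k=0$ is immediate. For the inductive step, assume $f(x_{k-1}) - f^* \le \epsilon_{k-1}$. If $\epsilon_{k-1} < \delta f^*$ then $f(x_{k-1}) - f^* < \delta f^*$ already, the desired multiplicative guarantee holds, and Algorithm~\ref{alg:lp_reg} may terminate; otherwise $\epsilon_{k-1} \ge \delta f^*$, the hypothesis of Lemma~\ref{lem:progress} is met, and applying it at $x_{k-1}$ with target accuracy $\epsilon_k$ yields $x_k$ with $f(x_k) - f^* \le 2^{-p}\epsilon_{k-1} = \epsilon_k$ using $O(p^{14/3}n^{1/3}\log^3(n/\delta))$ solves in matrices $\ma^\top\md\ma$ with $\md \succeq 0$ diagonal, closing the induction.

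Second I would count iterations: the invariant combined with $\epsilon_0 \le n^{(p-2)/2}f^*$ gives $f(x_k) - f^* \le 2^{-pk} n^{(p-2)/2} f^*$, so it suffices to run until $2^{-pk}n^{(p-2)/2} \le \delta$, i.e., $k \ge \frac{p-2}{2p}\log_2 n + \frac{1}{p}\log_2(1/\delta)$. Hence $k = \log_2(n/\delta^{1/p})$ iterations, the loop bound of Algorithm~\ref{alg:lp_reg}, suffice, and this quantity is $O(\log(n/\delta))$. Multiplying the per-iteration cost from Lemma~\ref{lem:progress} by this iteration count produces the claimed total of $O(p^{14/3}n^{1/3}\log^4(n/\delta))$ linear system solves.

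I expect the only point needing care to be the bookkeeping around the precondition $\epsilon_{k-1} \ge \delta f^*$ of Lemma~\ref{lem:progress}: one must note that assuming it loses no generality, since its failure already certifies that $x_{k-1}$ is a $(1+\delta)$-approximate minimizer so the loop may halt, and verify via $\epsilon_0 \le n^{(p-2)/2}f^*$ that the geometric shrinkage of the $\epsilon_k$'s crosses the $\delta f^*$ threshold within $\log_2(n/\delta^{1/p})$ steps. All remaining details --- validity of the $R$ and $M$ inputs at each call and the $p^{14/3}n^{1/3}$ per-call cost --- are delegated to Lemmas~\ref{lem:lp_dist} and~\ref{lem:progress}.
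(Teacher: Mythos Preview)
Your proposal is correct and follows the same approach as the paper: combine the per-iteration cost from Lemma~\ref{lem:progress} with the $O(\log(n/\delta))$ iteration count of the outer loop. The paper's own proof is essentially the one-line remark preceding the corollary, and your write-up simply fills in the induction and the bookkeeping around the precondition $\epsilon_{k-1}\ge \delta f^*$ that the paper leaves implicit.
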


\section{Lower bound}\label{sec:lb}
\newcommand{\prog}{i_r^{+}}
\newcommand{\alg}{\mathcal{A}}
\newcommand{\andd}{\,,\,}

In this section we establish a lower bound showing that the $(R/r)^{2/3}$ 
scaling in the oracle complexity we achieve is tight. For simplicity, we   
focus on a setting where the functions are defined on a bounded domain of 
radius $R>0$, and are 1-Lipschitz but potentially non-smooth; afterwards, 
we explain how to extend the result to unconstrained, differentiable and 
strictly convex functions. We assume throughout the section that 
$\mm = 
\mI$, i.e., that we work in the standard $\ell_2$ norm. We defer all the 
proofs in this section to \Cref{sec:lb-appendix}.

Following the literature on information-based 
complexity~\cite{nemirovski1983problem}, we state and prove our lower 
bound for the class of \emph{$r$-local oracles}, which for every query 
point $\bar{x}$ return a \emph{function} $f_{\bar{x}}$ that is identical to 
$f$ in a  
neighborhood of $x$. However, we additionally require the radius of this 
neighborhood to be at least $r$. Therefore, a query to an $r$-local oracle 
suffices to implement a ball optimization oracle (as well as a gradient 
oracle), and consequently a lower bound on algorithms interacting with an  
$r$-local oracles is also a lower bounds for algorithms a utilizing ball 
optimization oracle. The formal definition of the oracle class follows. 

\begin{defn}[Local oracles and algorithms]
	\label{def:loc-oralce}  We call $\oracleLoc$ an $r$-local oracle for 
		function $f:\domain \to \R$ if given query point $\bx \in \R^d$ it 
		returns 
		$f_{\bx}: \domain \to \R$ such that $f_{\bx}(x) = f(x)$ for all $x\in 
		\ballset$. We call (possibly randomized) algorithms that interact with 
		$r$-local oracles \emph{$r$-local algorithms}.
\end{defn}

\newcommand{\mprog}{p}
\newcommand{\algis}{a_i}
\newcommand{\algif}{\tilde{a}_i}
We prove our lower bound using a small extension of the well-established 
machinery of high-dimensional optimization lower 
bounds~\cite{nemirovski1983problem,woodworth2016tight,
	carmon2019lower_i,bubeck2019complexity}. To describe it, we 
start with the notion of coordinate progress, denoting for any $x\in\R^d$
\begin{equation}\label{eq:prog-def}
\prog(x) \defeq \min \{ i\in[d] \mid |x_j|  \le r~\mbox{for all}~j\ge i \},
\end{equation}
where we let $\prog(x) \defeq d+1$ when $|x_d| > r$, i.e. $\prog(x)$ is 
the index following the last ``large'' entry of $x$. With this notation, 
we define a key notion for proving our lower bound.
\begin{defn}[Robust zero-chains]
	\label{def:robust-chain}  Function $f:B_1(0) \to \R$ is an $r$-robust 
	zero-chain if $\forall \bx\in\R^d$, $x\in\ballset$, 
	\begin{equation*}
	f(x) = f(x_1, \ldots, x_{\prog(\bx)}, 0, \ldots, 0).
	\end{equation*}
\end{defn}
The notion of $r$-robust zero-chain we use here is very close to 
the robust zero-chain defined in~\citep[Definition 4]{carmon2019lower_i}, 
except here we require the equality to hold in a fixed ball rather than just a  
neighborhood of $\bx$. The following lemma shows that $r$-local 
algorithms operating on a random rotation of an $r$-robust zero-chain 
make slow progress with high probability.\footnote{
In \Cref{sec:lb-appendix} we provide a concise proof for 
\Cref{lem:robust-zero-chain-prog-bound} and compare it to existing 
proofs in the literature.
}
\begin{restatable}{lem}{proglem}\label{lem:robust-zero-chain-prog-bound}
Let $\frac{r}{R},\delta \in (0,1)$, $N\in\N$ and $d \ge 
\ceil[\big]{N+\frac{20R^2}{r^2} 
\log 
	\frac{20NR^2}{\delta r^2}}$. Let $f:\domain\to\R$ be an $r$-robust 
	zero-chain and let $\mU\in\R^{d\times d}$ be a random orthogonal matrix and fix 
	an $r$-local algorithm $\alg$. With probability at least $1-\delta$ over the 
	draw of $\mU$, there exists an $r$-local oracle $\oracle$ for $f_\mU(x) 
	\defeq f(\mU^\top x)$ 
	such that the queries $x_1, x_2, \ldots$ of 
	$\alg$ interacting with $\oracle$ satisfy
	\begin{equation*}
	\prog(\mU^\top x_i) \le i ~~\mbox{for all}~~i\le N.
	\end{equation*} 
\end{restatable}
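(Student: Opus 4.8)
\textbf{Proof plan for \Cref{lem:robust-zero-chain-prog-bound}.}
The plan is to argue inductively on the query index $i$, maintaining the invariant that all queries $x_1,\dots,x_i$ of $\alg$ satisfy $\prog(\mU^\top x_j)\le j$ and that the oracle has so far only revealed information consistent with the ``truncated'' function. The starting point is the observation that, because $f$ is an $r$-robust zero-chain, the value $f_\mU(x)=f(\mU^\top x)$ restricted to the ball $\ballsett{\bar x}$ depends only on the coordinates $(\mU^\top \cdot)_1,\dots,(\mU^\top \cdot)_{\prog(\mU^\top \bar x)}$. So the strategy for the oracle is: on query $\bar x$, return the function $f_{\bar x}(x)=f(P_k \mU^\top x)$, where $k=\prog(\mU^\top \bar x)$ and $P_k$ zeros out coordinates $k+1,\dots,d$; by the zero-chain property this agrees with $f_\mU$ on $\ballsett{\bar x}$, so it is a valid $r$-local oracle response. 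The key point is that this response reveals nothing about the directions $\mU e_{k+1},\dots,\mU e_d$ — the algorithm's entire transcript up to step $i$ is a (measurable) function of $\mU e_1,\dots,\mU e_{k_i}$ only, where $k_i=\max_{j\le i}\prog(\mU^\top x_j)$.

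The core of the argument is then a probabilistic claim: conditioned on the transcript through step $i$ (equivalently, on $\mU e_1,\dots,\mU e_{k_i}$), the next query $x_{i+1}$ — which is a deterministic (or, with randomization, conditionally independent) function of that transcript — is ``nearly orthogonal'' to $\mU e_{k_i+1}$, so that $|\langle x_{i+1}, \mU e_{k_i+1}\rangle| \le r$ with high probability, which forces $\prog(\mU^\top x_{i+1})\le k_i+1$. Here I would use the standard fact that the projection of a fixed unit vector onto a uniformly random direction in the orthogonal complement of a $k$-dimensional subspace of $\R^d$ concentrates: $\mU e_{k_i+1}$, conditioned on the earlier columns, is uniform on the unit sphere of a $(d-k_i)$-dimensional space, and $\|x_{i+1}\|\le R$ since the domain is $\domain$. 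A Gaussian/spherical concentration bound gives $\P(|\langle x_{i+1},\mU e_{k_i+1}\rangle| > r) \le 2\exp(-c\, r^2 (d-k_i)/R^2)$ for an absolute constant $c$ (I would take $c=1/2$ with the usual $1-x\le e^{-x}$ type estimates), and since $d-k_i \ge d-N \ge \frac{20R^2}{r^2}\log\frac{20NR^2}{\delta r^2}$ by the dimension hypothesis, this probability is at most $\frac{\delta}{N}$ (up to adjusting constants, which is exactly why the lemma's dimension bound has the $20$ and the $\log\frac{20NR^2}{\delta r^2}$ factors). A union bound over $i\le N$ then yields the conclusion with probability $\ge 1-\delta$; the induction is closed because $\prog(\mU^\top x_{i+1})\le k_i+1 \le i+1$.

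\textbf{Main obstacle.} The delicate part is the conditioning argument: I must argue carefully that the oracle responses through step $i$ — and hence the query $x_{i+1}$ — are independent of $\mU e_{k_i+1}$ given $\mU e_1,\dots,\mU e_{k_i}$, despite the fact that $k_i$ is itself a random, adaptively-determined index (a stopping-time-like object). The clean way to handle this is to fix a deterministic schedule: show by induction that for every realization, the first $i$ queries depend on $\mU$ only through its first $k_i$ columns, argue this is a ``prefix-measurable'' event, and then peel off one column at a time. Since $k_i\le i\le N$ always holds on the good event, one can, if preferred, simply union bound over all $N$ possible values of the relevant index rather than tracking $k_i$ exactly, at the cost of a slightly worse constant — which the generous dimension bound absorbs. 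Rotational invariance of the sphere is what makes "the next column is uniform in the remaining subspace given the earlier columns" rigorous, and this is the one place where the random rotation $\mU$ (as opposed to a fixed hard instance) is essential.
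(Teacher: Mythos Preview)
Your overall strategy matches the paper's: define the oracle to return the truncated function $f(P_k\mU^\top \cdot)$, use the zero-chain property to verify it is $r$-local, argue that the transcript through step $i$ depends only on the first few columns of $\mU$, and then apply spherical concentration plus a union bound. The handling of the adaptive index $k_i$ via the ``peel off one column at a time'' description is essentially the same idea the paper uses (the paper writes it as a careful union bound rather than an induction, but this is cosmetic).

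There is, however, a genuine gap in the probabilistic step. You claim that showing $|\langle x_{i+1},\mU e_{k_i+1}\rangle|\le r$ ``forces $\prog(\mU^\top x_{i+1})\le k_i+1$.'' It does not: by the definition of $\prog$, you need $|\langle x_{i+1},\mU e_j\rangle|\le r$ for \emph{every} $j\ge k_i+1$, not just $j=k_i+1$. The algorithm's query $x_{i+1}$ could in principle have a large inner product with $\mU e_{k_i+7}$, say, which would make $\prog(\mU^\top x_{i+1})>k_i+1$. The paper handles this by an additional union bound over all $d-i+1$ remaining coordinates (using exchangeability of the unseen columns), which costs a factor of roughly $d$; this is precisely why the concentration bound is driven down to $\delta/d^2$ rather than $\delta/N$. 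Your stated target of $\delta/N$ per step would not survive this extra factor of $d$. The dimension hypothesis is generous enough to absorb it (indeed $2\exp(-\tfrac{r^2}{2R^2}(d-N))$ is far smaller than $\delta/(Nd)$), but you must actually perform the union bound over coordinates and redo the arithmetic accordingly.
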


With Lemma~\ref{lem:robust-zero-chain-prog-bound} in hand, to prove the lower 
bound we need to construct an $r$-robust zero-chain 
function $f_{N,r}$ with the additional property that every $x$ with 
$\prog(x) \le N$ is significantly suboptimal. Fortunately, 
Nemirovski's function~\cite{nemirovski1983problem} 
satisfies these properties.
\begin{restatable}{lem}{nemifunc}\label{lem:nemirovski-func}
	Let $r>0$ and $N\in\N$. Define
	\begin{equation}\label{eq:nemi-func}
	f_{N,r}(x) \defeq \max_{i\in[N]} \{ x_i - 4r \cdot i\}
	\end{equation}
	\begin{enumerate}
		\item \label{item:robust-zc} The function $f_{N,r}$ is an $r$-robust zero-chain.  
		\item \label{item:suboptimality} For all $x\in\domain$ such that $\prog(x)\le N$, we have $f_{N,r}(x) - \inf_{z\in \domain}f_{N,r}(z)\ge \frac{R}{\sqrt{N}} - 4Nr$.
		\item \label{item:lipschitz} The function $f_{N,r}$ is convex and 1-Lipschitz.
	\end{enumerate}
\end{restatable}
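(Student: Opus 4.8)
The plan is to verify the three claimed properties of $f_{N,r}(x) = \max_{i\in[N]}\{x_i - 4ri\}$ in turn, each by a short direct argument. For the \emph{robust zero-chain} property (item~\ref{item:robust-zc}), fix $\bx\in\R^d$ and $x\in\ballset$, and set $p = \prog(\bx)$. I would split into two cases. If $p \ge N$, then coordinates $x_1,\dots,x_N$ are unaffected by zeroing out coordinates $>p$, so $f_{N,r}(x) = f_{N,r}(x_1,\dots,x_p,0,\dots,0)$ trivially. If $p < N$, I must show the maximum over $i\in[N]$ is attained at some $i \le p$, so that coordinates $x_{p+1},\dots,x_N$ are irrelevant. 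Here I use that $x\in\ballsett{\bx}$ forces $|x_j - \bx_j|\le r$, and by definition of $\prog$ we have $|\bx_j|\le r$ for all $j\ge p$; hence $|x_j|\le 2r$ for $j > p$. Meanwhile the term for index $p$ satisfies $x_p - 4rp \ge -r - 4rp$ (using $|x_p| \le |\bx_p| + r$, but actually I only need a crude lower bound — in fact I'll compare against index $1$: one can instead note $x_1 - 4r \ge \dots$). More cleanly: for any $j > p$, $x_j - 4rj \le 2r - 4r(p+1) = -4rp - 2r$, whereas $x_1 - 4r \ge -2r\cdot\mathbf{1}[1>p] - \dots$; the point is that the index-$j$ term for $j>p$ is smaller than, say, the index-$p$ term, which is at least $-r-4rp > -4rp-2r$ (even using only $|x_p|\le 2r$ gives $x_p - 4rp \ge -2r - 4rp$, matching — so I should compare with index $j=1$ or handle the boundary carefully). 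The honest execution compares the best index-$>p$ term, bounded above by $-4rp - 2r$, with the index-$\min(p,1)$ term; since $x\in\ballsett{\bx}$ and $|\bx_j|\le r$ for $j\ge p$ whereas for $j<p$ there is no constraint, the term at the last "large" coordinate of $\bx$ dominates. This is the one step requiring a little care with the inequalities.

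For the \emph{suboptimality} bound (item~\ref{item:suboptimality}), I would first compute $\inf_{z\in\domain} f_{N,r}(z)$. The standard choice is $z^* = -\frac{R}{\sqrt{N}}(e_1 + \dots + e_N)$, which lies in $\domain$ since $\ltwo{z^*} = R$; then $f_{N,r}(z^*) = \max_{i\in[N]}\{-R/\sqrt N - 4ri\} = -R/\sqrt N - 4r$. On the other hand, if $\prog(x)\le N$ then $|x_i|\le r$ for all $i\ge \prog(x)$, and in particular there is \emph{some} index $i_0 \le N$ with $|x_{i_0}|\le r$ — indeed take $i_0 = \max(\prog(x), 1)$ if $\prog(x)\le N$, so $x_{i_0}\ge -r$ (actually I want a coordinate among $[N]$ that is small; since $\prog(x)\le N$, coordinate $\prog(x)$ itself, if $\le N$, satisfies $|x_{\prog(x)}|\le r$, and if $\prog(x) = $ something $\le N$ this works — one subtlety if $\prog(x)=d+1$, but that contradicts $\prog(x)\le N\le d$). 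Then $f_{N,r}(x) \ge x_{i_0} - 4r i_0 \ge -r - 4rN$, so $f_{N,r}(x) - \inf f_{N,r} \ge (-r - 4rN) - (-R/\sqrt N - 4r) = R/\sqrt N + 3r - 4rN \ge R/\sqrt N - 4rN$. I'd present the cleanest version of the index bookkeeping; the claimed $R/\sqrt N - 4Nr$ follows with room to spare.

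For \emph{convexity and 1-Lipschitzness} (item~\ref{item:lipschitz}): $f_{N,r}$ is a maximum of affine functions $x\mapsto x_i - 4ri$, hence convex; each such function has gradient $e_i$ with $\ltwo{e_i} = 1$, so $f_{N,r}$ is 1-Lipschitz in $\ell_2$. This part is immediate. I expect the main obstacle to be item~\ref{item:robust-zc}: getting the case analysis on $\prog(\bx)$ exactly right and choosing the right pair of indices to compare so that the "$2r$ slack from the ball plus $r$ slack from the definition of $\prog$" is genuinely dominated by the $4r$-per-index separation in Nemirovski's construction — the constant $4$ (rather than, say, $2$) is presumably chosen precisely to make this comparison go through with the factor-of-$2$ loss incurred by the radius-$r$ ball. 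Everything else is routine.
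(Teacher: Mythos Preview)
Your overall strategy matches the paper's, and items~\ref{item:suboptimality} and~\ref{item:lipschitz} are fine. But your plan for item~\ref{item:robust-zc} contains a genuine confusion that would leave the argument incomplete if you followed your own ``honest execution.''

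You correctly compute, for $j>p=\prog(\bx)$, that $x_j-4rj\le 2r-4r(p+1)=-4rp-2r$, and that $x_p-4rp\ge -2r-4rp$. You then dismiss this comparison as ``matching'' and go looking for a different index (index $1$, or the ``last large coordinate'' $p-1$). That is a mistake on two counts. First, non-strict inequality is all you need: $x_j-4rj\le -4rp-2r\le x_p-4rp$ already shows the maximum over $i\in[N]$ is attained at some $i\le p$. Second, your proposed alternatives do not work: for index $1$ you have no control on $x_1$ when $p>1$, and for index $p-1$ you only know $|\bx_{p-1}|>r$, which gives no lower bound on $x_{p-1}$. The paper does exactly the comparison you abandoned, chaining
\[
x_j-4rj\le \bx_j+r-4rj\le \bx_{p}+3r-4rj\le x_{p}+4r-4rj\le x_{p}-4rp,
\]
using $|\bx_j|\le r$, $|\bx_p|\le r$, $|x_p-\bx_p|\le r$, and $j\ge p+1$.

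You are also missing one small step: showing $\max_{i\le p}\{x_i-4ri\}$ equals $f_{N,r}(x)$ is only half of the robust zero-chain identity. You must also check that zeroing coordinates $>p$ does not create a new maximizer, i.e., that $0-4rj\le x_p-4rp$ for $j>p$. This is immediate ($-4r(p+1)\le -2r-4rp$), but the paper states it explicitly and you should too.

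For item~\ref{item:suboptimality}, your argument is correct; the paper is slightly cleaner by taking $i_0=N$ directly (since $\prog(x)\le N$ forces $|x_N|\le r$), avoiding the bookkeeping around $i_0=\prog(x)$.
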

\Cref{lem:nemirovski-func}.\ref{item:robust-zc} is the main technical 
novelty of the section, while the other parts are known 
and stated for completeness. Combining Lemmas 
\ref{lem:robust-zero-chain-prog-bound} and \ref{lem:nemirovski-func} 
with 
appropriate choices of $N$ and $d$ immediately gives the lower bound.

\begin{restatable}{thm}{lbmain}\label{thm:lb}
	Let $\frac{r}{R},\delta\in(0,1)$ and $d = \ceil[\big]{60 
		(\frac{R}{r})^2 \log 
		\frac{R}{\delta \cdot r}}$. There 
	exists a distribution $P$ over convex and 1-Lipschitz functions from 
	$\domain\to\R$ and corresponding $r$-local oracles such that the 
	following holds for any $r$-local algorithm.
	With probability at least $1-\delta$ over the draw of $(f,\oracle)\sim P$, 
	when the algorithm interacts with $\oracle$, its first 
	$\ceil[\big]{\frac{1}{10} (\frac{R}{r})^{2/3}}$ 
	queries are 
	at least $R^{2/3}r^{1/3}$ suboptimal for $f$. 
\end{restatable}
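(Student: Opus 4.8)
The plan is to combine Lemmas~\ref{lem:robust-zero-chain-prog-bound} and~\ref{lem:nemirovski-func} with a careful choice of the internal parameter $N$. First I would set $N \defeq \ceil{\frac{1}{10}(R/r)^{2/3}}$, which is exactly the query budget asserted in the theorem, and take $f$ to be a uniformly random rotation $f_{N,r}(\mU^\top x)$ of Nemirovski's function from~\eqref{eq:nemi-func}, with the corresponding $r$-local oracle supplied by Lemma~\ref{lem:robust-zero-chain-prog-bound}; the distribution $P$ is the law of the pair $(f,\oracle)$ induced by the random orthogonal $\mU$. By Lemma~\ref{lem:nemirovski-func}.\ref{item:lipschitz} this is a distribution over convex $1$-Lipschitz functions on $\domain$, as required. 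The key point is that Lemma~\ref{lem:robust-zero-chain-prog-bound} applies to \emph{any} $r$-local algorithm, so once we verify its dimension hypothesis we get, with probability $\ge 1-\delta$, that every query $x_i$ with $i \le N$ satisfies $\prog(\mU^\top x_i) \le i \le N$.

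Next I would invoke Lemma~\ref{lem:nemirovski-func}.\ref{item:suboptimality}: on the event above, each of the first $N$ queries has suboptimality at least $\frac{R}{\sqrt N} - 4Nr$ for $f_{N,r}$ (equivalently for $f$, since rotation preserves function values and the domain is a ball). It then remains to check that this quantity is $\ge R^{2/3}r^{1/3}$ for our choice of $N$. With $N \approx \frac{1}{10}(R/r)^{2/3}$ one computes $\frac{R}{\sqrt N} \approx \sqrt{10}\, R^{2/3} r^{1/3}$ and $4Nr \approx \frac{2}{5} R^{2/3} r^{1/3}$, so the difference is on the order of $3 R^{2/3} r^{1/3}$, comfortably above the target; absorbing ceilings and being slightly more careful with constants (the statement's $\frac{1}{10}$ coefficient is chosen precisely to make this slack work out) closes this step. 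This is a routine inequality once the exponents are matched, so I would not belabor it.

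The step that needs the most care is verifying the dimension hypothesis of Lemma~\ref{lem:robust-zero-chain-prog-bound}, namely $d \ge \ceil{N + \frac{20R^2}{r^2}\log\frac{20NR^2}{\delta r^2}}$, from the theorem's stated $d = \ceil{60(R/r)^2 \log\frac{R}{\delta r}}$. Here $N = O((R/r)^{2/3}) = o((R/r)^2)$ is negligible compared to the second term, so the real content is bounding $\frac{20R^2}{r^2}\log\frac{20NR^2}{\delta r^2}$ by $60(R/r)^2\log\frac{R}{\delta r}$. Substituting $N \le (R/r)^{2/3}$ gives $\frac{20NR^2}{\delta r^2} \le \frac{20 (R/r)^{8/3}}{\delta} \le \frac{20 (R/r)^3}{\delta}$ (using $r < R$), whose logarithm is at most $3\log\frac{R}{\delta r} + \log 20 \le (3 + \log 20)\log\frac{R}{\delta r}$; since $20(3+\log 20) + o(1) \le 60$ (using $\log\frac{R}{\delta r} \ge \log 2 > 0$ to absorb the $N$ term and the ceilings), the hypothesis holds. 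I expect this constant-chasing to be the main obstacle — not conceptually deep, but it is where one must be honest about the ceilings, the $1 < r/R$ versus $r < R$ direction, and the gap between the clean $(R/r)^2$ in the theorem and the messier expression in the lemma. Finally, I would append the promised remark that the construction extends to unconstrained, differentiable, strictly convex instances: restrict attention to a bounded region by adding a steep smooth barrier outside $\domain$, replace the finite $\max$ by a log-sum-exp / Moreau-type smoothing at scale much smaller than $r$ so that $r$-local information is unchanged on $\domain$, and add a tiny strongly convex quadratic; each modification perturbs values by $o(R^{2/3}r^{1/3})$ and preserves the zero-chain structure on $\domain$, so the same bound survives.
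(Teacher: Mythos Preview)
Your approach is essentially identical to the paper's: set $N$ to the query budget, take a random rotation of $f_{N,r}$ as the hard distribution, verify the dimension hypothesis of Lemma~\ref{lem:robust-zero-chain-prog-bound}, and plug into Lemma~\ref{lem:nemirovski-func}.\ref{item:suboptimality}. One arithmetic slip: your claim $20(3+\log 20)\le 60$ is false (it is roughly $120$), so your dimension verification as written does not close; the paper simply asserts the inequality $60(R/r)^2\log\frac{R}{\delta r}\ge N+\frac{20R^2}{r^2}\log\frac{20NR^2}{\delta r^2}$ without detailed constant-chasing, and indeed the constants are tight enough that a cruder bound like yours (dropping the $\tfrac{1}{10}$ in $N$ and overestimating the $\delta$ exponent) will not work---you need to use $N\le\tfrac{1}{10}(R/r)^{2/3}$ and keep the $\log(R/r)$ and $\log(1/\delta)$ contributions separate.
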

\begin{proof}%
	Set $N=\floor*{\frac{1}{10}(\frac{R}{r})^{2/3}}$ and $d\ge 
	\ceil*{\frac{60R^2}{r^2}\log\frac{R}{\delta r}} \ge 
	\ceil*{N+\frac{20R^2}{r^2}\log\frac{20NR^2}{\delta r^2}}$. Apply 
	\Cref{lem:robust-zero-chain-prog-bound} with 
	\Cref{lem:nemirovski-func}.\ref{item:robust-zc} to argue that for any $r$-local
	algorithm, with probability at least $1-\delta$ the first $N$ queries 
	$x_1,\ldots,x_N$ satisfy $\prog(\mU^\top x_i) \le N$, and substitute 
	into 
	\Cref{lem:nemirovski-func}.\ref{item:suboptimality} to conclude that the 
	suboptimality of each query is at least 
	$(\sqrt{10}-\frac{4}{10})(R^2 r)^{1/3} 
	\ge 
	(R^2 r)^{1/3}$.
\end{proof}

\Cref{thm:lb} shows as long as we wish to solve the minimization problem to 
accuracy $\epsilon = o(R^{2/3}r^{1/3})$, for any $r$-local algorithm, there is a 
function requiring $\Omega((R/r)^{2/3})$ queries to an $r$-local oracle, which gives strictly more information than a ball optimization oracle, proving our desired lower bound. However, our acceleration scheme assumes unconstrained, smooth and 
strictly convex problems. We now outline modifications to the 
construction~\eqref{eq:nemi-func} extending it to this regime.
	\paragraph{Unconstrained domain.} Following the approach of 
	\citet{diakonikolas2018lower}, we note that the construction 
	$f(x) = \max\{\half 
	f_{N,r}(x), \norm{x}-\frac{R}{2}\}$  provides a hard instance for 
	algorithms with unbounded queries, because any query with norm larger than
	$\norm{x}$ is uninformative about the rotation of coordinates and has a 
	positive function value, so that the minimizer is still constrained to a ball 
	of radius $R$.
	\paragraph{Smooth functions.} The smoothing argument 
	of~\citet{guzman2015lower} shows that $f(x) = \inf_{x'\in 
	\ball{r}(x)}\{f_{N,2r}(x')+\frac{1}{r}\norm{x'-x}^2\}$ is an $r$-robust 
	zero-chain 
	that is also $2/r$-smooth and satisfies $|f(x)-f_{N,2r}(x)|\le r$ for all 
	$x$. Consequently, the lower bound holds for $O(1/r)$ smooth functions.
	\paragraph{Strictly convex functions.} The function $f(x) = f_{N,r}(x) 
	+ \frac{r^{1/3}}{2R^{4/3}}\norm{x}^2$ provides an 
	$(r^{1/3}R^{-4/3})$-strongly convex hard instance, since we 
	can add the strongly convex regularizer directly in the local oracle 
	without revealing additional information, and the regularizer size 
	is small enough so as not to significantly affect the optimality gap.

\bibliographystyle{plainnat}
\bibliography{main.bib}

\appendix

\newpage

\part*{Supplementary material}

\section{Unaccelerated optimization with a ball optimization oracle}
\label{sec:vanilla}

Here, we state and analyze the unaccelerated algorithm
for optimization of convex function $f$ with access to a ball optimization
oracle. For simplicity of exposition, we assume that the oracle $\oracleBall$
is a $(0,r)$-oracle, i.e. is exact, and we perform our analysis in the $\ell_2$ norm; for a general Euclidean seminorm, a change of basis suffices to give the same guarantees.

\begin{algorithm}
\caption{Iterating ball optimization}
\begin{algorithmic}[1]
\label{alg:unaccel_framework}
\STATE{\textbf{Input: } Function $f:\R^{d}\rightarrow\R$ and initial point 
$x_0\in \R^d$.}

\FOR{$k=1,2,...$}

\STATE{$x_{k}\gets\oracleBall(x_{k-1})$}

\ENDFOR
\end{algorithmic}
\end{algorithm}
We first note that the distance $\norm{x_{k}-x^{*}}_{2}$ is decreasing
in $k$.
\begin{lem}
For all $x\in\R^{d}$, $\norm{\oracleBall(x)-x^{*}}_{2} \le 
\norm{x-x^{*}}_{2}$.\label{lem:closer-unaccel}
\begin{proof}
The claim is obvious if $\oracleBall(x)=x^{*}$, so we assume this
is not the case. Note that for any $\tilde{x}$ with $\norm{\tilde{x}-x}_{2}\le r$,
if there is any point $\hat{x}$ on the line between $\tilde{x}$
and $x^{*}$, then by strict convexity $f(\hat{x})<f(\tilde{x})$.
Now, clearly $\oracleBall(x)$ lies on the boundary of the ball around
$x$, and moreover the angle between the vectors $x-\oracleBall(x)$
and $x^{*}-\oracleBall(x)$ must be obtuse, else the line between
$\oracleBall(x)$ and $x^{*}$ intersects the ball twice. Thus, by
law of cosines $\norm{\oracleBall(x)-x^{*}}_{2}\le\sqrt{\norm{x-x^{*}}_{2}^{2}-r^{2}}$,
yielding the conclusion.
\end{proof}
\end{lem}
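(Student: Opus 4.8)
The plan is to argue geometrically, splitting on whether $z\defeq\oracleBall(x)$ equals the (unique, by strict convexity) global minimizer $x^{*}$. If $z=x^{*}$ the bound is immediate since the right-hand side is nonnegative, so assume $z\ne x^{*}$; equivalently $x^{*}\notin\ballsett{x}$. In this case I would first record that $z$ lies on the boundary sphere, i.e.\ $\norm{z-x}_{2}=r$: if $z$ were in the interior of the ball, a short step from $z$ toward $x^{*}$ would remain in $\ballsett{x}$ and, by strict convexity, strictly decrease $f$, contradicting optimality of $z$. (This and the next step are also instances of \Cref{lem:opt_characterization} specialized to $\mm=\mI$.)

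The crux is to show that the vectors $x-z$ and $x^{*}-z$ make an obtuse angle at $z$, i.e.\ $\langle x-z,\,x^{*}-z\rangle<0$. Geometrically: if the angle were not obtuse, the open segment from $z$ to $x^{*}$ would re-enter the interior of $\ballsett{x}$ just past $z$ (the line through $z$ and $x^{*}$ would cross the sphere a second time), and by strict convexity $f$ would be strictly below $f(z)$ at such interior points, again contradicting that $z$ minimizes $f$ over $\ballsett{x}$. Equivalently, one can invoke \Cref{lem:opt_characterization}, which gives $\grad f(z)=-\frac{\norm{\grad f(z)}_{2}}{r}(z-x)$, a nonnegative multiple of $x-z$; convexity yields $f(x^{*})\ge f(z)+\langle\grad f(z),\,x^{*}-z\rangle$, while $f(x^{*})<f(z)$ since $x^{*}$ is the unique minimizer and $z\ne x^{*}$, so $\langle\grad f(z),\,x^{*}-z\rangle<0$ and hence $\langle x-z,\,x^{*}-z\rangle<0$.

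I would then conclude with the law of cosines: using $\norm{z-x}_{2}=r$ and $\langle x-z,\,x^{*}-z\rangle\le0$,
\[
\norm{x-x^{*}}_{2}^{2}=\norm{z-x}_{2}^{2}+\norm{z-x^{*}}_{2}^{2}-2\langle x-z,\,x^{*}-z\rangle\;\ge\;r^{2}+\norm{z-x^{*}}_{2}^{2},
\]
whence $\norm{\oracleBall(x)-x^{*}}_{2}\le\sqrt{\norm{x-x^{*}}_{2}^{2}-r^{2}}\le\norm{x-x^{*}}_{2}$, which gives the claim — and in fact the stronger $\sqrt{\cdot-r^{2}}$ contraction, which is precisely what is needed to bound the iteration count of \Cref{alg:unaccel_framework} by $\otilde(R/r)$.

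I expect the only delicate point to be the justification of the obtuse-angle claim: one must use \emph{strict} convexity (so that $f$ genuinely strictly decreases as one moves off $z$ toward $x^{*}$ within the ball) and apply optimality at a boundary point, where the operative condition is the projected stationarity of \Cref{lem:opt_characterization} rather than $\grad f(z)=0$. Everything else is elementary Euclidean geometry.
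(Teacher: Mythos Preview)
Your proposal is correct and follows essentially the same approach as the paper: split on whether $z=x^{*}$, argue that $z$ is on the boundary, establish the obtuse angle at $z$, and finish with the law of cosines to obtain the stronger $\sqrt{\norm{x-x^{*}}_{2}^{2}-r^{2}}$ bound. Your write-up is in fact more careful than the paper's, offering both the geometric argument and the alternative via \Cref{lem:opt_characterization} plus convexity for the obtuse-angle step; the only minor remark is that your closing comment about the $\sqrt{\cdot-r^{2}}$ contraction being ``precisely what is needed'' for the $\otilde(R/r)$ bound is not how the paper actually proves \Cref{thm:mainclaim-unaccel} (it uses a separate convexity interpolation argument), but this does not affect the correctness of the lemma's proof.
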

\begin{thm}
\label{thm:mainclaim-unaccel}
Suppose for some $x_0 \in \R^d$, $f(x_0) - f(x^*) \le \eps_0$ and $\norm{x_{0}-x^{*}}_{2}\le R$, where $x^*$ is the global minimizer of $f$. Algorithm~\ref{alg:unaccel_framework} computes an $\eps$-approximate minimizer in $O\left(\frac{R}{r}\log\frac{\eps_0}{\eps}\right)$ calls to $\oracleBall$.\label{thm:decrease-unaccel}
\end{thm}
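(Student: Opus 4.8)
The plan is to establish a geometric decrease of the optimality gap $f(x_k) - f(x^*)$: at each step the ball optimization oracle shrinks the gap by a factor $1 - r/R$, so after $O((R/r)\log(\eps_0/\eps))$ iterations the gap drops below $\eps$, and since each iteration is a single oracle call this gives the stated bound.

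First I would handle the trivial case. If at some iteration $k$ the current iterate satisfies $\norm{x_k - x^*}_2 \le r$, then $x^* \in \ballsett{x_k}$ and, since $\oracleBall$ is exact, $x_{k+1} = \oracleBall(x_k) = x^*$; the algorithm has found an exact minimizer and all subsequent iterates remain $x^*$. Hence we may assume $\norm{x_k - x^*}_2 > r$ for every iteration up to the one that succeeds.

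For the main estimate, fix an iteration $k$, write $r_k \defeq \norm{x_k - x^*}_2$, and let $z_k \defeq (1 - r/r_k)\, x_k + (r/r_k)\, x^*$ be the point on the segment from $x_k$ to $x^*$ at distance exactly $r$ from $x_k$. Then $z_k \in \ballsett{x_k}$, so by definition of the oracle $f(x_{k+1}) \le f(z_k)$, and by convexity of $f$,
\[
f(x_{k+1}) - f(x^*) \;\le\; f(z_k) - f(x^*) \;\le\; \left(1 - \frac{r}{r_k}\right)\bigl(f(x_k) - f(x^*)\bigr).
\]
To turn this into a uniform contraction I would invoke Lemma~\ref{lem:closer-unaccel}: it gives $r_{k+1} \le r_k$, hence inductively $r_k \le r_0 = \norm{x_0 - x^*}_2 \le R$, so $1 - r/r_k \le 1 - r/R$. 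Unrolling the recursion then yields $f(x_k) - f(x^*) \le (1 - r/R)^k \eps_0 \le e^{-kr/R}\eps_0$, which is at most $\eps$ as soon as $k \ge (R/r)\log(\eps_0/\eps)$, giving the claimed $O((R/r)\log(\eps_0/\eps))$ oracle calls.

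The argument is essentially routine; the only subtlety worth flagging is that the per-step contraction factor $1 - r/r_k$ must be bounded away from $1$ \emph{uniformly} over $k$, which is exactly why monotonicity of the distances $r_k$ from Lemma~\ref{lem:closer-unaccel} is needed rather than merely the initial bound $r_0 \le R$.
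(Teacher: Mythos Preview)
Your proof is correct and follows essentially the same approach as the paper: take a convex combination of $x_k$ and $x^*$ lying in the radius-$r$ ball, use the oracle guarantee and convexity to get a contraction by $1-r/R$, and invoke Lemma~\ref{lem:closer-unaccel} to ensure $\norm{x_k-x^*}_2\le R$ for all $k$. The only cosmetic difference is that the paper defines the comparison point directly with the global bound $R$ (as $(1-r/R)x_{k-1}+(r/R)x^*$), whereas you use the exact distance $r_k$ and then bound $r_k\le R$; the resulting recursion is identical.
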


\begin{proof}
Define $\tilde{x}_{k}\defeq\left(1-\frac{r}{R}\right)x_{k-1}+\frac{r}{R}x^{*}$,
and note that because $\norm{x_{k-1}-x^{*}}_{2}\le R$, $\tilde{x}_{k}$
is in the ball of radius $r$ around $x_{k-1}$. Thus, convexity yields
\[
f(x_{k})\le f(\tilde{x}_{k})\le\left(1-\frac{r}{R}\right)f(x_{k-1})+\frac{r}{R}f(x^{*}) \Rightarrow f(x_k)-f(x^*) \le \left(1-\frac{r}{R}\right)\left(f(x_{k-1})-f(x^*)\right).
\]
Iteratively applying this inequality yields the conclusion.
\end{proof}

\section{Analysis of Monteiro-Svaiter acceleration}
\label{sec:framework}

In this section, we prove Proposition~\ref{prop:ball_constrained_error_bound}.
We do so by first proving a sequence of lemmas demonstrating properties 
of Algorithm~\ref{alg:msaccel_framework}. Throughout, we recall $\nabla 
f(x) \in \text{Im}(\mm)$ for all $x$ by assumption. We note that these are 
variants of existing bounds in the 
literature~\citep[e.g.][]{MonteiroS13a,bubeck2019complexity}.
\begin{lem}
\label{lem:lam_a_bound} For all $k\ge0$,
\[
\lambda_{k+1}A_{k+1}=a_{k+1}^{2}\text{ and }\sqrt{A_{k}}\ge\half\sum_{i\in[k]}\sqrt{\lambda_{i}}.
\]
\end{lem}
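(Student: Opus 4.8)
The plan is to verify the two claims in order, both by elementary algebra; no induction is needed beyond a telescoping sum.

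For the first identity I start from the relation $a_{k+1} = \frac{1}{2}\bigl(\lambda_{k+1} + \sqrt{\lambda_{k+1}^2 + 4\lambda_{k+1}A_k}\bigr)$ supplied by the MS oracle (Definition~\ref{def:ms-oracle}, with $A = A_k$). Rearranging gives $2a_{k+1} - \lambda_{k+1} = \sqrt{\lambda_{k+1}^2 + 4\lambda_{k+1}A_k} \ge 0$, so the left-hand side is nonnegative and I may square both sides. After squaring and cancelling the $\lambda_{k+1}^2$ terms this leaves $4a_{k+1}^2 - 4a_{k+1}\lambda_{k+1} = 4\lambda_{k+1}A_k$, i.e. $a_{k+1}^2 = \lambda_{k+1}(a_{k+1} + A_k) = \lambda_{k+1}A_{k+1}$, where the last step uses the update $A_{k+1} = A_k + a_{k+1}$ from Algorithm~\ref{alg:msaccel_framework}. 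That is exactly the first claim.

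For the second claim, first observe that $A_k \ge 0$ for all $k$, since $A_0 \ge 0$ and each $a_{k+1} \ge 0$; in the degenerate case $A_{k+1} = 0$ one also has $\lambda_{k+1} = 0$ and the bound below is trivial, so assume $A_{k+1} > 0$. From the first identity, $\sqrt{\lambda_{k+1}} = a_{k+1}/\sqrt{A_{k+1}} = (A_{k+1} - A_k)/\sqrt{A_{k+1}}$. Factoring the numerator as a difference of squares gives $(A_{k+1}-A_k)/\sqrt{A_{k+1}} = (\sqrt{A_{k+1}} - \sqrt{A_k})\bigl(1 + \sqrt{A_k}/\sqrt{A_{k+1}}\bigr) \le 2(\sqrt{A_{k+1}} - \sqrt{A_k})$, where the inequality uses $A_k \le A_{k+1}$. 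Hence $\half\sqrt{\lambda_i} \le \sqrt{A_i} - \sqrt{A_{i-1}}$ for every $i \ge 1$, and summing over $i \in [k]$ telescopes to $\half\sum_{i\in[k]}\sqrt{\lambda_i} \le \sqrt{A_k} - \sqrt{A_0} \le \sqrt{A_k}$, as desired. There is essentially no hard step here: the only points needing a moment's care are the sign condition $2a_{k+1} \ge \lambda_{k+1}$ before squaring and the degenerate $A_{k+1} = 0$ case when dividing by $\sqrt{A_{k+1}}$, both immediate from $A_0, a_{k+1}, \lambda_{k+1} \ge 0$.
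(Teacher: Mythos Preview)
Your proof is correct and follows essentially the same approach as the paper: the first identity comes from recognizing $a_{k+1}$ as a root of the quadratic $a^2 = \lambda_{k+1}(A_k + a)$, and the second from telescoping $\sqrt{A_k}-\sqrt{A_0}$ and bounding each increment $\sqrt{A_i}-\sqrt{A_{i-1}} = a_i/(\sqrt{A_i}+\sqrt{A_{i-1}}) = \sqrt{\lambda_i A_i}/(\sqrt{A_i}+\sqrt{A_{i-1}}) \ge \tfrac{1}{2}\sqrt{\lambda_i}$. You simply arrange the second computation in the reverse order (bounding $\sqrt{\lambda_i}$ by $2(\sqrt{A_i}-\sqrt{A_{i-1}})$) and are slightly more careful about the degenerate $A_{k+1}=0$ case, but the substance is identical.
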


\begin{proof}
The first claim is from solving a quadratic in the definition
of $a_{k+1}$. The second follows from
\begin{align*}
\sqrt{A_{k}}\geq\sqrt{A_{k}}-\sqrt{A_{0}}&=\sum_{i\in[k]}\left(\sqrt{A_{i}}-\sqrt{A_{i-1}}\right)=\sum_{i\in[k]}\frac{a_{i}}{\sqrt{A_{i}}+\sqrt{A_{i-1}}}\\
&=\sum_{i\in[k]}\frac{\sqrt{\lambda_{i}A_{i}}}{\sqrt{A_{i}}+\sqrt{A_{i-1}}}\geq\frac{1}{2}\sum_{i\in[k]}\sqrt{\lambda_{i}}
\end{align*}
where we used that $A_0 \geq 0$ and $\{A_i\}$ are increasing.
\end{proof}

\begin{lem}
\label{lem:step_lower_bound} For all $k\ge0$, if  $\normm{x_{k+1}-y_{k}}>0$, we have for $\sigma\in[0,1)$,
\[
\normmd{\grad f(x_{k+1})}>0\text{ and }\lambda_{k+1}\geq\frac{\normm{x_{k+1}-y_{k}}}{\normmd{\grad f(x_{k+1})}}\left(1-\sigma\right) > 0\,.
\]
\end{lem}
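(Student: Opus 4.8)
The plan is to apply the triangle inequality directly to the defining guarantee~\eqref{eq:ms_requirement} of the $\sigma$-MS oracle. Abbreviate $z \defeq x_{k+1}$, $y \defeq y_k$, $\lambda \defeq \lambda_{k+1} \ge 0$, and $g \defeq \nabla f(z)$. Recalling the identification of the MS-oracle output with the iterates in Algorithm~\ref{alg:msaccel_framework}, the guarantee~\eqref{eq:ms_requirement} reads $\normm{z - (y - \lambda\mmd g)} \le \sigma\normm{z-y}$, i.e.\ $\normm{(z-y) + \lambda\mmd g} \le \sigma\normm{z-y}$. Since $\lambda \ge 0$ and $\normm{\cdot}$ is a seminorm, the triangle inequality gives
\[
\normm{z-y} - \lambda\normm{\mmd g} \;\le\; \normm{(z-y)+\lambda\mmd g} \;\le\; \sigma\normm{z-y},
\]
so that $\lambda\normm{\mmd g} \ge (1-\sigma)\normm{z-y}$.

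The second step is to replace $\normm{\mmd g}$ by $\normmd{g}$. This uses the standing assumption in Algorithm~\ref{alg:msaccel_framework} that $\nabla f(x) \in \im(\mm)$ for all $x$: then $\mm\mmd g = g$, hence $\normm{\mmd g}^2 = g^\top\mmd\mm\mmd g = g^\top\mmd g = \normmd{g}^2$. Substituting yields
\[
\lambda_{k+1}\normmd{\nabla f(x_{k+1})} \;\ge\; (1-\sigma)\normm{x_{k+1}-y_k}.
\]

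Finally I would conclude under the hypothesis $\normm{x_{k+1}-y_k} > 0$. Since $\sigma \in [0,1)$, the right-hand side above is strictly positive; as $\lambda_{k+1}$ is a finite nonnegative scalar output by the oracle, the displayed inequality forces $\normmd{\nabla f(x_{k+1})} > 0$ (and in particular $\lambda_{k+1} > 0$). Dividing through by $\normmd{\nabla f(x_{k+1})}$ gives the claimed bound $\lambda_{k+1} \ge \frac{\normm{x_{k+1}-y_k}}{\normmd{\nabla f(x_{k+1})}}(1-\sigma) > 0$.

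The proof is elementary and is a variant of standard Monteiro--Svaiter bounds; the only point requiring care — hence the main (minor) obstacle — is correctly handling the two distinct seminorms $\normm{\cdot}$ and $\normmd{\cdot}$ and invoking the image condition $\nabla f(x)\in\im(\mm)$ to pass between $\normm{\mmd\nabla f}$ and $\normmd{\nabla f}$; beyond that everything follows from the triangle inequality and the oracle definition.
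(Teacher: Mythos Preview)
Your proof is correct and essentially the same as the paper's: both derive the bound from the oracle guarantee~\eqref{eq:ms_requirement} via the triangle inequality, and both use the image condition $\nabla f(x)\in\im(\mm)$ to identify $\normm{\mmd g}$ with $\normmd{g}$. The only cosmetic difference is that for the lower bound on $\lambda_{k+1}$ the paper squares~\eqref{eq:ms_requirement}, applies Cauchy--Schwarz to the cross term, and solves the resulting quadratic in $\lambda_{k+1}$; this recovers exactly the inequality $(\normm{z-y}-\lambda\normmd{g})^2\le\sigma^2\normm{z-y}^2$, so your direct triangle-inequality argument is a slightly more economical route to the same conclusion.
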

\begin{proof}
For the first claim, by \eqref{eq:ms_requirement},
\[
\|x_{k+1}-y_k\|_{\mm}-\lambda_{k+1}\|\nabla f(x_{k+1})\|_{\mm^\dagger} \le \norm{x_{k+1}-(y_{k}-\lambda_{k+1}\mmd\nabla f(x_{k+1}))}_{\mm}\le\sigma\norm{x_{k+1}-y_{k}}_{\mm},
\]
since by assumption, for some $\sigma\in[0,1)$, $\normm{x_{k+1}-y_{k}}>0$, therefore
$\norm{\grad f(x_{k+1})}_{\mmd} = 0$ would contradict this assumption. 

For the second claim, Cauchy-Schwarz gives
\begin{align*}
\sigma^{2}\normm{x_{k+1}-y_{k}}^{2} & \geq\normm{x_{k+1}-\left(y_{k}-\lambda_{k+1}\mmd\grad f(x_{k+1})\right)}^{2}\\
 &\geq\norm{x_{k+1}-y_{k}}_{\mm}^{2}-2\lambda_{k+1}\norm{\grad f(x_{k+1})}_{\mmd}\norm{x_{k+1}-y_{k}}_{\mm}+\lambda_{k+1}^{2}\norm{\grad f(x_{k+1})}_{\mmd}^{2}.
\end{align*}
Solving the quadratic in $\lambda_{k+1}$ implies, for $P\defeq\norm{\grad f(x_{k+1})}_{\mmd}\norm{x_{k+1}-y_{k}}_{\mm}$,
\begin{align*}
\lambda_{k+1} & \geq\frac{2P-\sqrt{4P^{2}-4(1-\sigma^{2})P^{2}}}{2\norm{\grad f(x_{k+1})}_{\mmd}^{2}}=\frac{P(1-\sigma)}{\norm{\grad f(x_{k+1})}_{\mmd}^{2}}=\frac{\norm{x_{k+1}-y_{k}}_{\mm}}{\norm{\grad f(x_{k+1})}_{\mmd}}\left(1-\sigma\right).
\end{align*}
\end{proof}
Next, we provide the following lemma which gives a recursive bound
for the potential, $p_{k}$, which we define as follows:
\[p_k \defeq A_k \eps_k + r_k, \text{ where } \eps_k \defeq f(x_k) - f(x^*),\; r_k \defeq \half\normm{v_k - x^*}^2.\] We remark that the proof does not use
(\ref{eq:ms_requirement}) beyond using the property that $a_{k+1}>0$
(regardless of how they are induced by $\lambda_{k+1}$).
\begin{lem}
\label{lem:accel_gen} For all $k\ge0$,
\[
p_{k+1}\leq p_{k}+\frac{A_{k+1}^{2}}{2a_{k+1}^{2}}\left(\normm{x_{k+1}-\left(y_{k}-\frac{a_{k+1}^{2}}{A_{k+1}}\mmd\grad f(x_{k+1})\right)}^{2}-\normm{x_{k+1}-y_{k}}^{2}\right).
\]	
\end{lem}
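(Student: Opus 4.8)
The plan is to run the standard Monteiro--Svaiter estimate-sequence computation: expand
\[
p_{k+1}-p_{k} = \left(A_{k+1}\eps_{k+1}-A_k\eps_k\right) + \left(r_{k+1}-r_k\right),
\]
bound the two pieces separately, add them, and recognize the result as the completed square appearing on the right-hand side of the lemma. Throughout I use that $\grad f(x)\in\im(\mm)$, so $\mm\mmd\grad f(x)=\grad f(x)$; consequently $\normm{\mmd\grad f(x)}^{2}=\normmd{\grad f(x)}^{2}$ and $\innerProduct{\mmd\grad f(x)}{w}_{\mm}=\innerProduct{\grad f(x)}{w}$ for any $w$.

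First I would handle the distance term. Using $v_{k+1}=v_k-a_{k+1}\mmd\grad f(x_{k+1})$ and expanding the square in $\normm{\cdot}$,
\[
r_{k+1}-r_k = -a_{k+1}\innerProduct{\grad f(x_{k+1})}{v_k-x^*} + \tfrac12 a_{k+1}^2\normmd{\grad f(x_{k+1})}^2 .
\]
Next I would handle the function-value term. Writing $A_{k+1}\eps_{k+1}-A_k\eps_k = A_k\bigl(f(x_{k+1})-f(x_k)\bigr) + a_{k+1}\bigl(f(x_{k+1})-f(x^*)\bigr)$ and applying convexity of $f$ at $x_{k+1}$ to both brackets (i.e. $f(x_{k+1})-f(x_k)\le\innerProduct{\grad f(x_{k+1})}{x_{k+1}-x_k}$ and similarly with $x^*$), I get
\[
A_{k+1}\eps_{k+1}-A_k\eps_k \le \innerProduct{\grad f(x_{k+1})}{A_{k+1}x_{k+1}-A_k x_k - a_{k+1}x^*}.
\]
Substituting the identity $A_k x_k + a_{k+1} v_k = A_{k+1}y_k$ (which is exactly the definition of $y_k=y_{t_{\lambda_{k+1}}}$ since $t_{\lambda_{k+1}}=A_k/A_{k+1}$) rewrites the right-hand side as $A_{k+1}\innerProduct{\grad f(x_{k+1})}{x_{k+1}-y_k} + a_{k+1}\innerProduct{\grad f(x_{k+1})}{v_k-x^*}$.

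Adding the two bounds, the $\pm a_{k+1}\innerProduct{\grad f(x_{k+1})}{v_k-x^*}$ terms cancel, leaving
\[
p_{k+1}-p_k \le A_{k+1}\innerProduct{\grad f(x_{k+1})}{x_{k+1}-y_k} + \tfrac12 a_{k+1}^2\normmd{\grad f(x_{k+1})}^2 .
\]
To finish I would verify that the lemma's right-hand side equals this expression: setting $c=a_{k+1}^2/A_{k+1}$ and expanding $\normm{x_{k+1}-(y_k-c\mmd\grad f(x_{k+1}))}^2-\normm{x_{k+1}-y_k}^2 = 2c\innerProduct{\grad f(x_{k+1})}{x_{k+1}-y_k} + c^2\normmd{\grad f(x_{k+1})}^2$ (the cross term collapses by $\grad f(x_{k+1})\in\im\mm$), then multiplying by $A_{k+1}^2/(2a_{k+1}^2)=A_{k+1}/(2c)$ and using $A_{k+1}c=a_{k+1}^2$ reproduces exactly the displayed upper bound.

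There is no genuine obstacle here; the argument is a routine potential-function calculation. The only thing requiring care is the seminorm/pseudoinverse bookkeeping — keeping track of when $\mm\mmd$ may be dropped on $\grad f(x_{k+1})$ — and checking that divisions are legitimate, i.e. $A_{k+1}>0$, which holds because $A_0\ge0$ and $a_{k+1}>0$. In particular, as the surrounding remark notes, the proof never uses the MS requirement \eqref{eq:ms_requirement} beyond the fact that $a_{k+1}>0$.
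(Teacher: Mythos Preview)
Your proof is correct and essentially identical to the paper's: both expand $r_{k+1}-r_k$ via $v_{k+1}=v_k-a_{k+1}\mmd\grad f(x_{k+1})$, use convexity of $f$ at $x_{k+1}$ together with the identity $A_k x_k + a_{k+1}v_k = A_{k+1}y_k$ to bound the function-value increment, cancel the $\innerProduct{\grad f(x_{k+1})}{v_k-x^*}$ terms, and complete the square. The only difference is organizational---the paper first rewrites $v_k$ in terms of $x_{k+1},y_k,x_k$ and then applies convexity, while you apply convexity first and then substitute the $y_k$ identity---but the computation is the same.
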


\begin{proof}
By Lemma~\ref{lem:step_lower_bound} we have that $\lambda_{k+1}>0$,
so that $a_{k+1}>0$. Then,
\[
v_{k}=\frac{1}{a_{k+1}}\left(A_{k+1}y_{k}-A_{k}x_{k}\right)=x_{k+1}+\frac{A_{k+1}}{a_{k+1}}(y_{k}-x_{k+1})+\frac{A_{k}}{a_{k+1}}(x_{k+1}-x_{k}).
\]
Consequently, convexity of $f$, i.e., $\innerProduct{\grad f(b)}{a-b}\leq f(a)-f(b)$
for all $a,b\in\R^{n}$, yields
\[
a_{k+1}\innerProduct{\grad f(x_{k+1})}{x^{*}-v_{k}}\leq A_{k}\epsilon_{k}-A_{k+1}\epsilon_{k+1}+A_{k+1}\innerProduct{\grad f(x_{k+1})}{x_{k+1}-y_{k}}.
\]
Further, expanding $r_{k+1}=\frac{1}{2}\normm{v_{k+1}-x_{*}}^{2}$,
where we recall $v_{k+1}=v_{k}-a_{k+1}\mmd\nabla f(x_{k+1})$, gives
\begin{align*}
\frac{1}{2}\normm{v_{k+1}-x^{*}}^{2} & =r_{k}+\frac{a_{k+1}^{2}}{2}\normmd{\grad f(x_{k+1})}^{2}+a_{k+1}\innerProduct{\mm\mmd\grad f(x_{k+1})}{x^{*}-v_{k}}.
\end{align*}
Combining these inequalities, and recalling $\mm \mmd \nabla f(x_{k + 1}) = \nabla f(x_{k + 1})$, then yields that
\[
A_{k+1}\epsilon_{k+1}+r_{k+1}\leq A_{k}\epsilon_{k}+r_{k}+\frac{a_{k+1}^{2}}{2}\normmd{\grad f(x_{k+1})}^{2}+A_{k+1}\innerProduct{\grad f(x_{k+1})}{x_{k+1}-y_{k}}.
\]
The result then follows from $p_{k}=A_{k}\epsilon_{k}+r_{k}$ and
the fact that 
\begin{align*}
\frac{A_{k+1}^{2}}{2a_{k+1}^{2}}\normFull{x_{k+1}-\left(y_{k}-\frac{a_{k+1}^{2}}{A_{k+1}}\mmd\grad f(x_{k+1})\right)}_{\mm}^{2}\\
=\frac{A_{k+1}^{2}}{2a_{k+1}^{2}}\normm{x_{k+1}-y_{k}}^{2}+A_{k+1}\innerProduct{\grad f(x_{k+1})}{x_{k+1}-y_{k}}+\frac{a_{k+1}^{2}}{2}\normmd{\grad f(x_{k+1})}^{2}.
\end{align*}
\end{proof}
Next, we use (\ref{eq:ms_requirement}) and the choice of $a_{k+1}$
in the algorithm to improve the bound in Lemma~\ref{lem:accel_gen}.
\begin{lem}
\label{lem:accel_framework} For all $k\geq0$, 
\[
p_{k}+\sum_{i\in[k]}\frac{(1-\sigma^{2})A_{i}}{2\lambda_{i}}\normm{x_{i+1}-y_{i}}^{2}\leq p_{0}\,.
\]
\end{lem}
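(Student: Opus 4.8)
The plan is to upgrade the recursive inequality of Lemma~\ref{lem:accel_gen} into a genuine one-step decrease for the potential $p_k$ and then sum it (equivalently, induct on $k$). Beyond Lemma~\ref{lem:accel_gen} itself, the only ingredients are the quadratic identity $\lambda_{k+1}A_{k+1}=a_{k+1}^{2}$ from Lemma~\ref{lem:lam_a_bound} and the defining guarantee~\eqref{eq:ms_requirement} of the MS oracle; no further convexity or smoothness of $f$ enters at this stage.

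First I would simplify the bracketed term in Lemma~\ref{lem:accel_gen} using $\lambda_{k+1}A_{k+1}=a_{k+1}^{2}$. This identity is used in two ways at once: it gives $a_{k+1}^{2}/A_{k+1}=\lambda_{k+1}$, so the first norm appearing in Lemma~\ref{lem:accel_gen} is precisely $\normm{x_{k+1}-(y_{k}-\lambda_{k+1}\mmd\grad f(x_{k+1}))}$, which by~\eqref{eq:ms_requirement} is at most $\sigma\normm{x_{k+1}-y_{k}}$; and it gives $A_{k+1}^{2}/a_{k+1}^{2}=A_{k+1}/\lambda_{k+1}$. Plugging both facts into Lemma~\ref{lem:accel_gen} and squaring~\eqref{eq:ms_requirement} yields the one-step bound
\[
p_{k+1}\le p_{k}-\frac{(1-\sigma^{2})A_{k+1}}{2\lambda_{k+1}}\normm{x_{k+1}-y_{k}}^{2}\qquad\text{for all }k\ge0 .
\]
If $\lambda_{k+1}=0$ then $a_{k+1}=0$, hence $t_{\lambda}=1$ and $y_{k}=x_{k}=x_{k+1}$, so $p_{k+1}=p_{k}$ and the step is vacuous; moreover, by Lemma~\ref{lem:step_lower_bound}, $\lambda_{k+1}>0$ whenever $\normm{x_{k+1}-y_{k}}>0$, so the displayed quantity is well defined in every nontrivial step.

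The proof then concludes by induction on $k$: the base case $k=0$ is the trivial identity $p_{0}\le p_{0}$, and for the inductive step we apply the one-step bound at index $k$, subtracting one more nonnegative term $\frac{(1-\sigma^{2})A_{k+1}}{2\lambda_{k+1}}\normm{x_{k+1}-y_{k}}^{2}$ from the running total while passing from $p_{k}$ to $p_{k+1}$. Collecting the accumulated terms and reindexing the summation over the steps $0,\dots,k-1$ gives the stated inequality $p_{k}+\sum_{i\in[k]}\frac{(1-\sigma^{2})A_{i}}{2\lambda_{i}}\normm{x_{i+1}-y_{i}}^{2}\le p_{0}$.

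I do not expect a substantive obstacle: all the real content lives in Lemmas~\ref{lem:accel_gen}, \ref{lem:lam_a_bound} and~\ref{lem:step_lower_bound}, and what remains is algebra and bookkeeping. The one place requiring care is the simultaneous use of $a_{k+1}^{2}=\lambda_{k+1}A_{k+1}$ both to manufacture the regularizer $\lambda_{k+1}\mmd\grad f(x_{k+1})$ inside the norm (so that~\eqref{eq:ms_requirement} applies verbatim) and to collapse the prefactor $A_{k+1}^{2}/a_{k+1}^{2}$, together with keeping the telescoping indices straight when converting the sum over steps into the sum over $[k]$.
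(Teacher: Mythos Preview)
Your proposal is correct and follows essentially the same route as the paper: substitute $a_{k+1}^{2}=\lambda_{k+1}A_{k+1}$ into Lemma~\ref{lem:accel_gen} to turn the prefactor into $A_{k+1}/(2\lambda_{k+1})$ and the first norm into the left-hand side of~\eqref{eq:ms_requirement}, apply~\eqref{eq:ms_requirement}, and telescope. Your extra remark on the $\lambda_{k+1}=0$ case is not in the paper's proof but is harmless additional care.
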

\begin{proof}
Lemma \ref{lem:lam_a_bound} gives that for our choice of parameters,
$\lambda_{k+1}A_{k+1}=a_{k+1}^{2}$ for all $k\geq0$. Lemma~\ref{lem:accel_gen}
then implies that 
\begin{align*}
A_{k+1}\epsilon_{k+1}+r_{k+1}&\leq A_{k}\epsilon_{k}+r_{k}+\frac{A_{k+1}}{2\lambda_{k+1}}\left(\normm{x_{k+1}-\left(y_{k}-\lambda_{k+1}\mmd\grad f(x_{k+1})\right)}^{2}-\normm{x_{k+1}-y_{k}}^{2}\right)\\
&\leq A_{k}\epsilon_{k}+r_{k}+\frac{(\sigma^2-1)A_{k+1}}{2\lambda_{k+1}}\normm{x_{k+1}-y_{k}}^{2}
\end{align*}
where we used \eqref{eq:ms_requirement} and the claim now follows from inductively applying
the resulting bound.
\end{proof}
Below we give a diameter bound on the iterates from the algorithm.
\begin{lem}
\label{lem:domain_bound} If $x_{0}=v_{0}$, then for all $k\geq0$
we have
\[
\normm{x_{k}-x^{*}}\leq\frac{2-\sigma}{1-\sigma}\sqrt{2p_{0}},\;\normm{v_{k}-x^{*}}\le\sqrt{2p_{0}}.
\]
\end{lem}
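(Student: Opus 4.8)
The plan is to derive both bounds from the potential telescoping of \Cref{lem:accel_framework}, combined with a ``linear coupling'' identity that writes each iterate $x_{k+1}$ as a convex combination of $x_k$ and $v_{k+1}$, perturbed by the inexactness error allowed by \eqref{eq:ms_requirement}. The bound on $\normm{v_k - x^*}$ needs no induction: \Cref{lem:accel_framework} gives $p_k \le p_0$, and since $\eps_k = f(x_k)-f(x^*)\ge 0$ and $A_k \ge 0$ we get $\half\normm{v_k - x^*}^2 = r_k \le p_k \le p_0$, which is the claim. (This uses only $A_0 \ge 0$, not $x_0=v_0$.)

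For $\normm{x_k-x^*}$ I would induct on $k$. The base case $k=0$ is immediate from $x_0 = v_0$: $\normm{x_0-x^*} = \normm{v_0-x^*}\le\sqrt{2p_0}\le\frac{2-\sigma}{1-\sigma}\sqrt{2p_0}$. For the inductive step, set $t_k\defeq A_k/A_{k+1}$, so $1-t_k = a_{k+1}/A_{k+1}$, and by \Cref{lem:lam_a_bound} ($\lambda_{k+1}A_{k+1}=a_{k+1}^2$) we have $\lambda_{k+1}/a_{k+1} = a_{k+1}/A_{k+1} = 1-t_k$. Using the update $v_{k+1}=v_k - a_{k+1}\mmd\nabla f(x_{k+1})$ and $y_k = t_kx_k+(1-t_k)v_k$, this yields
\[
y_k - \lambda_{k+1}\mmd\nabla f(x_{k+1}) = t_k x_k + (1-t_k)v_{k+1},
\]
so $x_{k+1} = t_k x_k + (1-t_k)v_{k+1} + e_k$ with $e_k\defeq x_{k+1} - \bigl(y_k - \lambda_{k+1}\mmd\nabla f(x_{k+1})\bigr)$, and \eqref{eq:ms_requirement} gives $\normm{e_k}\le\sigma\normm{x_{k+1}-y_k}$.

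It remains to control the movement $\normm{x_{k+1}-y_k}$ sharply. The proof of \Cref{lem:accel_framework} establishes the one-step decrease $p_{k+1}\le p_k - \frac{(1-\sigma^2)A_{k+1}}{2\lambda_{k+1}}\normm{x_{k+1}-y_k}^2$; combined with $p_{k+1}\ge 0$, $p_k\le p_0$, and $A_{k+1}/\lambda_{k+1} = (A_{k+1}/a_{k+1})^2 = (1-t_k)^{-2}$ this gives $\normm{x_{k+1}-y_k}\le (1-t_k)\sqrt{2p_0/(1-\sigma^2)}$, hence $\normm{e_k}\le\frac{\sigma(1-t_k)}{\sqrt{1-\sigma^2}}\sqrt{2p_0}$. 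Applying the triangle inequality to the coupling identity, the $v$-bound to $v_{k+1}$, and the inductive hypothesis to $x_k$ then gives
\[
\normm{x_{k+1}-x^*} \le t_k\,\frac{2-\sigma}{1-\sigma}\sqrt{2p_0} + (1-t_k)\Bigl(1+\frac{\sigma}{\sqrt{1-\sigma^2}}\Bigr)\sqrt{2p_0}.
\]
Since $1+\frac{\sigma}{\sqrt{1-\sigma^2}}\le\frac{2-\sigma}{1-\sigma}$ for every $\sigma\in[0,1)$ (equivalently $\sigma^2(1-\sigma)\le 1+\sigma$), the right-hand side is a convex combination of quantities at most $\frac{2-\sigma}{1-\sigma}\sqrt{2p_0}$, closing the induction.

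I expect the main obstacle to be the two ingredients in the middle: deriving the convex-combination identity for $x_{k+1}$ (which hinges on $\lambda_{k+1}A_{k+1}=a_{k+1}^2$), and recognizing that to land the exact constant $\frac{2-\sigma}{1-\sigma}$ one should bound $\normm{x_{k+1}-y_k}$ via the potential decrease rather than via the cruder estimate $\normm{x_{k+1}-y_k}\le(1-\sigma)^{-1}\lambda_{k+1}\normmd{\nabla f(x_{k+1})}$ together with $\normm{v_{k+1}-v_k}\le 2\sqrt{2p_0}$, which would only yield the constant $\frac{1+\sigma}{1-\sigma}$ (larger than claimed once $\sigma>\tfrac12$).
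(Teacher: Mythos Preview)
Your proof is correct, but it takes a genuinely different route from the paper's. The paper does not induct; instead it writes $\normm{x_{k+1}-x^*}\le\normm{y_k-x^*}+\normm{x_{k+1}-y_k}$, expands $y_k$ as a convex combination of $x_k$ and $v_k$ (not $v_{k+1}$), multiplies through by $A_{k+1}$, and unrolls the recursion to obtain
\[
A_{k+1}\normm{x_{k+1}-x^*}\le A_0\normm{x_0-x^*}+\sum_{i\le k} a_{i+1}\normm{v_i-x^*}+\sum_{i\le k} A_{i+1}\normm{x_{i+1}-y_i}.
\]
The first sum is handled via $A_{k+1}=A_0+\sum a_{i+1}$ and the uniform bound $\normm{v_i-x^*}\le\sqrt{2p_0}$; the second is bounded by Cauchy--Schwarz against the \emph{aggregate} potential drop $\sum_i \frac{A_{i+1}}{\lambda_{i+1}}\normm{x_{i+1}-y_i}^2\le\frac{2p_0}{1-\sigma^2}$ from \Cref{lem:accel_framework}, together with $\sqrt{\sum a_{i+1}^2}\le\sum a_{i+1}\le A_{k+1}$. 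This yields $\normm{x_{k+1}-x^*}\le\sqrt{2p_0}\bigl(1+(1-\sigma^2)^{-1/2}\bigr)\le\frac{2-\sigma}{1-\sigma}\sqrt{2p_0}$.

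Your argument replaces the unrolling-plus-Cauchy--Schwarz with a clean one-step induction built on the coupling identity $x_{k+1}=t_k x_k+(1-t_k)v_{k+1}+e_k$ and the \emph{single-step} potential drop. The two derivations use the same quantitative input ($A_{k+1}/\lambda_{k+1}=(1-t_k)^{-2}$ and the $(1-\sigma^2)$ factor) and arrive at the same final elementary inequality, just aggregated differently. Your version is arguably more direct and avoids the square-root subadditivity trick; the paper's version has the minor advantage that it only invokes the statement of \Cref{lem:accel_framework} rather than its proof's one-step form.
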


\begin{proof}
Since $p_{k}=A_{k}\epsilon_{k}+r_{k}$, the second claim follows immediately
from Lemma~\ref{lem:accel_framework} implying that $\half\normm{v_{k}-x^{*}}^{2}=r_{k}\leq p_{0}$
for all $k\geq0$. Further, convexity and the triangle inequality
imply that 
\begin{align*}
\normm{x_{k+1}-x^{*}} & \leq\normm{y_{k}-x^{*}}+\normm{x_{k+1}-y_{k}}\\
 & \leq\frac{A_{k}}{A_{k+1}}\normm{x_{k}-x^{*}}+\frac{a_{k+1}}{A_{k+1}}\normm{v_{k}-x^{*}}+\normm{x_{k+1}-y_{k}}.
\end{align*}
Rearranging and applying recursively yields that 
\begin{align*}
A_{k+1}\normm{x_{k+1}-x^{*}} & \leq A_{k}\normm{x_{k}-x^{*}}+a_{k+1}\normm{v_{k}-x^{*}}+A_{k+1}\normm{x_{k+1}-y_{k}}\\
 & \leq A_{0}\normm{x_{0}-x^{*}}+\sum_{i=0}^{k}a_{i+1}\normm{v_{i}-x^{*}}+\sum_{i=0}^{k}A_{i+1}\normm{x_{i+1}-y_{i}}.
\end{align*}
Now, using $A_{k+1}=A_{0}+\sum_{i=0}^{k}a_{i+1}$, $x_{0}=v_{0}$,
the previously-derived $\normm{v_{i}-x^{*}}\leq\sqrt{2p_{0}}$,
and Cauchy-Schwarz,
\[
\normm{x_{k+1}-x_{*}}\leq\sqrt{2p_{0}}+\frac{1}{A_{k+1}}\sqrt{\left(\sum_{i=0}^{k}\lambda_{i+1}A_{i+1}\right)\left(\sum_{i=0}^{k}\frac{A_{i+1}}{\lambda_{i+1}}\normm{x_{i+1}-y_{i}}^{2}\right).}
\]
Now, since $\lambda_{k+1}A_{k+1}=a_{k+1}^{2}$ and $\sqrt{a+b}\leq\sqrt{a}+\sqrt{b}$
for nonnegative $a,b$ we have
\[
\sqrt{\sum_{i=0}^{k}\lambda_{i+1}A_{i+1}}\leq\sum_{i=0}^{k}\sqrt{\lambda_{i+1}A_{i+1}}=\sum_{i=0}^{k}a_{i+1}=A_{k+1},
\]
and the result follows from 
\[\sum_{i=0}^{k}\frac{A_{i+1}}{\lambda_{i+1}}\normm{x_{i+1}-y_{i}}^{2}\leq(1-\sigma^{2})^{-1}2p_{0}\]
(due to Lemma \ref{lem:accel_framework}), and $\sqrt{(1-\sigma^{2})^{-1}}\leq(1-\sigma)^{-1}$. 
\end{proof}

We next give a basic helper lemma which will be useful in the proof of Proposition~\ref{prop:ball_constrained_error_bound}.
\begin{lem}
	\label{lem:growth_lemma} Let $\left\{ B_{k}\right\} _{k\in\mathbb{N}}$
	be a nonnegative, nondecreasing sequence such that $B_{k}\geq\sum_{i\in[k]}\alpha B_{i}$
	for some $\alpha\in[0,1)$ and all $k$. Then for all $k$, $B_{k}\geq\exp(\alpha(k-1))B_{1}$.
\end{lem}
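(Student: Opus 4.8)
The plan is to prove the claim by induction on $k$, the only genuine subtlety being that the hypothesis $B_k \ge \alpha\sum_{i\in[k]}B_i$ is self-referential: the sum on the right already contains $B_k$. First I would neutralize this by isolating the $i=k$ summand and rearranging, which gives $(1-\alpha)B_k \ge \alpha\sum_{i\in[k-1]}B_i$, i.e. $B_k \ge \frac{\alpha}{1-\alpha}\sum_{i\in[k-1]}B_i$. For $k=1$ the right-hand side is an empty sum, so the base case $B_1 \ge B_1 = e^{0}B_1$ is immediate.

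For the inductive step I would assume $B_j \ge e^{\alpha(j-1)}B_1$ for all $1\le j < k$ and substitute into the rearranged inequality, reducing the claim to the closed-form geometric estimate $\sum_{j=0}^{k-2} e^{\alpha j} = \frac{e^{\alpha(k-1)}-1}{e^{\alpha}-1}$; it then remains only to verify the elementary scalar inequality $\frac{\alpha}{(1-\alpha)(e^{\alpha}-1)}\ge 1$ on $[0,1)$, which follows from $e^{\alpha}\le \frac{1}{1-\alpha}$, equivalently $1-\alpha\le e^{-\alpha}$. A cleaner but essentially equivalent route is to track the partial sums $S_k \defeq \sum_{i\in[k]}B_i$ (with $S_0=0$): the rearranged hypothesis reads exactly $S_k \ge \frac{1}{1-\alpha}S_{k-1}$, so iterating gives $S_k \ge (1-\alpha)^{-(k-1)}B_1$, and then combining $B_k \ge \alpha S_k$ with $(1-\alpha)^{-1}\ge e^{\alpha}$ yields the exponential lower bound on $B_k$.

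I do not anticipate a real obstacle here — this is a routine potential-growth estimate of the kind that recurs throughout accelerated-method analyses. The two points that need a little care are (i) the self-referential form of the hypothesis, which the rearrangement above handles, and (ii) keeping the $\pm 1$'s straight in the geometric series and in the resulting scalar inequality; the elementary bounds invoked ($1-\alpha\le e^{-\alpha}$ and $e^{\alpha}-1\le \alpha/(1-\alpha)$) are standard and hold on all of $[0,1)$.
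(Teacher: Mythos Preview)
Your inductive approach is genuinely different from the paper's (which extends the sequence to a step function on $[0,\infty)$ and invokes a Gr\"onwall-type comparison), but in both of your routes the final scalar step does not close. After the induction and the geometric sum you obtain
\[
B_k \;\ge\; \frac{\alpha}{(1-\alpha)(e^{\alpha}-1)}\bigl(e^{\alpha(k-1)}-1\bigr)B_1,
\]
and the inequality $\tfrac{\alpha}{(1-\alpha)(e^{\alpha}-1)}\ge 1$ you check only yields $B_k \ge (e^{\alpha(k-1)}-1)B_1$, not $B_k \ge e^{\alpha(k-1)}B_1$. Your partial-sum route has the same defect: from $S_k \ge (1-\alpha)^{-(k-1)}B_1$ and $B_k \ge \alpha S_k$ you get only $B_k \ge \alpha\, e^{\alpha(k-1)}B_1$, with an extraneous factor $\alpha$ that you never remove. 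This is not a cosmetic off-by-one. Take $\alpha=\tfrac12$, $B_1=B_2=1$, and $B_j=2^{j-2}$ for $j\ge 2$; this sequence is nondecreasing and satisfies $B_k = \tfrac12\sum_{i\le k}B_i$ for every $k\ge 2$, yet $B_2=1<e^{1/2}B_1$. So the statement as written is actually false for small $k$, and any proof must lose a constant somewhere.

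For the paper's purposes this is harmless---the missing constant is absorbed into the hidden constants in Proposition~\ref{prop:ball_constrained_error_bound}---and the paper's own continuous-time argument is equally loose on this point. If you want your version to be rigorous, simply stop at $B_k \ge \alpha\,(1-\alpha)^{-(k-1)}B_1$ (your second route proves this cleanly) and note $(1-\alpha)^{-1}\ge e^{\alpha}$; the resulting $B_k\ge \alpha\, e^{\alpha(k-1)}B_1$ is all the downstream application needs.
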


\begin{proof}
	Extend $C(t)\defeq B_{\lceil t\rceil}$ for 
	all $t\ge1$, and let
	$C(t)\defeq\exp(\alpha(t-1))B_{1}$ for $t\in[0,1]$. Then for all
	$t\ge1$, 
	
	\[
	C(t)=B_{\lceil t\rceil}\ge\alpha\sum_{i\in[\lceil t\rceil]}B_{i}\ge\alpha\int_{0}^{t}C(s)ds,
	\]
	and it is easy to check that this inequality holds with equality for
	$t\in[0,1]$ as well. Letting $L(t)$ solve this integral inequality,
	i.e., $L(t)=C(t)$ for $t\in[0,1]$ and
	\[
	L(t)=\alpha\int_{0}^{t}L(s)ds,
	\]
	$L(t)=\exp(\alpha(t-1))C(1)$, and inequality
	$C(t)\ge L(t)$ yields the claim, recalling $B_k = C(k)$ for $k \in \N$.
\end{proof}

Now we are ready to put everything together and prove the main result of this section.
\convergencerateaccel*
\begin{proof}
First, we will show the bound
\begin{equation}
\label{eq:simplerclaim}
f(x_k) - f(x^*) \le \frac{p_0}{A_1}\exp\left(-\frac{3}{2}\left(\frac{r(1 - \sigma)}{\sqrt{p_0}}\right)^{2/3}(k - 1)\right).
\end{equation}
The reverse H\"older inequality with $p=3/2$ states that for all $u,v\in\R_{>0}^{k}$,
\begin{equation}
\innerProduct uv\geq\left(\sum_{i\in[k]}u_{i}^{2/3}\right)^{3/2}\cdot\left(\sum_{i\in[k]}v_{i}^{-2}\right)^{-1/2}.\label{eq:reverseholder}
\end{equation}
Lemma~\ref{lem:lam_a_bound} gives $\sqrt{A_{k}}\geq\frac{1}{2}\sum_{i\in[k]}\sqrt{\lambda_{i}}$.
Moreover, $\norm{x_{i}-y_{i-1}}_{\mm}>0$ by the assumptions of this
proposition, which implies by Lemma~\ref{lem:step_lower_bound}
that $A_{i}\ge\lambda_{i}>0$ as well. Thus, we can apply (\ref{eq:reverseholder})
with $u_{i}=\sqrt{A_{i}}\norm{x_{i}-y_{i-1}}_{\mm}$ and $v_{i}=\sqrt{\lambda_{i}}/u_{i}$, yielding
\begin{equation}
\sqrt{A_{k}}\geq\frac{1}{2}\sum_{i\in[k]}\sqrt{\lambda_{i}}\geq\frac{1}{2}\left(\sum_{i\in[k]}\left(\sqrt{A_{i}}\norm{x_{i}-y_{i-1}}_{\mm}\right)^{2/3}\right)^{3/2}\left(\sum_{i\in[k]}\left(\frac{\sqrt{\lambda_{i}}}{\sqrt{A_{i}}\norm{x_{i}-y_{i-1}}_{\mm}}\right)^{-2}\right)^{-1/2}.\label{eq:accel_prog1}
\end{equation}
Applying
Lemma~\ref{lem:accel_framework} yields that 
\begin{equation}
\sum_{i\in[k]}\left(\frac{\sqrt{\lambda_{i}}}{\sqrt{A_{i}}\norm{x_{i}-y_{i-1}}_{\mm}}\right)^{-2}=\sum_{i\in[k]}\frac{A_{i}\norm{x_{i}-y_{i-1}}_{\mm}^{2}}{\lambda_{i}}\leq\left(\frac{2}{1-\sigma^{2}}\right)p_{0}.\label{eq:accel_prog2}
\end{equation}
Now, since $\norm{x_{i}-y_{i-1}}_{\mm}\geq r$ by assumption, combining 
(\ref{eq:accel_prog1}) and (\ref{eq:accel_prog2}) gives
\[
A_{k}^{1/3}\geq\left(\frac{1}{2}\right)^{2/3}\left(\sum_{i\in[k]}A_{i}^{1/3}r^{2/3}\right)\left(\left(\frac{2}{1-\sigma^{2}}\right)p_{0}\right)^{-1/3}=\sum_{i\in[k]}A_{i}^{1/3}\left(\frac{r^{2}(1-\sigma^{2})}{8p_{0}}\right)^{1/3}.
\]
Finally, applying Lemma~\ref{lem:growth_lemma} implies that for
all $k\ge0$
\[
A_{k}\geq\exp\left(\frac{3}{2}\left(\frac{r^{2}(1-\sigma^{2})}{p_{0}}\right)^{1/3}(k-1)\right)A_{1}.
\]
Now, \eqref{eq:simplerclaim} follows from $\epsilon_k \le p_k / A_k \le p_0 /A_k$ (we have 
$p_k \le p_0$ from Lemma~\ref{lem:accel_framework}) 
and $(1-\sigma^{2})\leq(1-\sigma)^{2}$. Now, by our choice of $A_{0} = R^2/2\eps_0$, 
we have $p_{0} = R^{2}$. As $A_{1}\geq A_{0}$,
\[
\frac{p_{0}}{A_{1}}\le\frac{R^{2}}{A_{0}} =2\eps_{0}.
\]
Combining these bounds in the context of \eqref{eq:simplerclaim}, and using $3/2 > 1$, yields the result.
\end{proof}

\section{MS oracle implementation proofs}
\label{sec:lslipschitz}

First, we prove our characterization of the optimizer of a ball-constrained 
problem. 

\balloptchar*
\begin{proof}
	By considering the optimality conditions of the Lagrange dual problem
	\[
	\min_z \max_{\lambda \ge 0} f(z) + \frac{\lambda}{2}\left(\normm{z - y}^2 - r^2\right),
	\]
	we see there is some $\lambda\geq 0$ such that
	\[
	\grad f(z)=-\lambda\grad_{z}\left(\frac{1}{2}\normm{z - y}^{2}-\frac{r^2}{2}\right)=-\lambda \mm(z - y)\,.
	\]
	If $\lambda=0$ then $\grad f(z)=0$ and $z$ is a minimizer of $f$. On the other hand, if $\lambda>0$, then $\norm{z - y}_{\mm}=r$ and $\grad f(z) = -\lambda \mm(z - y)$. By taking the $\mmd$ seminorm of both sides of this condition, $\normmd{\grad f(z)} = \lambda \normm{z - y} = \lambda r$; solving for $\lambda$ and substituting yields the result.
\end{proof}

Next, on the path to proving Proposition~\ref{prop:line_search}, we give a helper result which bounds the change in the solution to a ball-constrained problem as we move the center.
\begin{lem}
\label{lem:lip_bound} For strictly convex, twice differentiable $f:\R^{d}\rightarrow\R$, let $\mm$ be a positive semidefinite matrix where $\grad f(u) \in \textup{Im}(\mm)$ for all $u \in \R^d$. Let $x,v\in\R^{d}$ be arbitrary vectors, and for all $t\in[0,1]$, let
\[
y_{t}\defeq tx+(1-t)v,\;z_{t}\defeq\argmin_{z \in \ballsett{y_t}}f(z).
\]
Then, for all $t\in[0,1]$ we have
\[
\normFull{\frac{d}{dt}z_{t}}_{\hess f(z_{t})}=\normFull{\frac{d}{dt}\grad f(z_{t})}_{(\hess f(z_{t}))^{-1}}\leq\norm{x-v}_{\hess f(z_{t})}.
\]
\end{lem}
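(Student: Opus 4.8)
The plan is to use the KKT characterization of ball-constrained minimizers from Lemma~\ref{lem:opt_characterization} and differentiate it in $t$. First I would split into two cases. If $z_t$ is the global minimizer of $f$, then $z_t = x^*$ for all $t'$ in a neighborhood of $t$ (since $x^* \in \mathcal{B}_r(y_{t'})$ on an open set), so $\frac{d}{dt}z_t = 0$ and both sides of the claimed inequality vanish. Otherwise Lemma~\ref{lem:opt_characterization} gives $\|z_t - y_t\|_\mm = r$ together with $\nabla f(z_t) = -\lambda_t\mm(z_t - y_t)$ for $\lambda_t = \|\nabla f(z_t)\|_{\mmd}/r > 0$. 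The first equality in the lemma is then purely formal: by the chain rule $\frac{d}{dt}\nabla f(z_t) = \nabla^2 f(z_t)\,\frac{d}{dt}z_t$, and for any vector $w$ one has $\|\nabla^2 f(z_t)\, w\|_{(\nabla^2 f(z_t))^{-1}} = \|w\|_{\nabla^2 f(z_t)}$ (diagonalize $\nabla^2 f(z_t)$; read the inverse as a pseudoinverse if $f$ is only strictly, not strongly, convex at $z_t$). So it remains to bound $\|\frac{d}{dt}z_t\|_{\nabla^2 f(z_t)}$.

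Next I would differentiate the KKT system. Write $H = \nabla^2 f(z_t)$, $g = \nabla f(z_t)$, $\dot y = \frac{d}{dt}y_t = x - v$, $\dot z = \frac{d}{dt}z_t$, $\dot\lambda = \frac{d}{dt}\lambda_t$. Differentiability of $t\mapsto(z_t,\lambda_t)$ follows from the implicit function theorem applied to $(z,\lambda)\mapsto(\nabla f(z) + \lambda\mm(z-y_t),\, \|z-y_t\|_\mm^2 - r^2)$, whose Jacobian in $(z,\lambda)$ is a bordered matrix with leading block $H + \lambda_t\mm$ and border $\mm(z_t - y_t)$; the block is positive definite and the border nonzero on $\mathrm{Im}(\mm)$ — where everything relevant lives since $\nabla f(u)\in\mathrm{Im}(\mm)$ for all $u$ — so the Jacobian is nonsingular there. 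Differentiating $\nabla f(z_t) + \lambda_t\mm(z_t - y_t) = 0$ yields $(H + \lambda_t\mm)\dot z + \dot\lambda\,\mm(z_t - y_t) = \lambda_t\mm\dot y$, and differentiating $(z_t - y_t)^\top\mm(z_t - y_t) = r^2$ yields $(z_t - y_t)^\top\mm(\dot z - \dot y) = 0$, which, using $\mm(z_t - y_t) = -\lambda_t^{-1}g$, is the same as $\langle g,\dot z\rangle = \langle g,\dot y\rangle$.

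Finally I would run the algebra. Substituting $\mm(z_t - y_t) = -\lambda_t^{-1}g$ into the differentiated first-order condition and taking inner products once with $\dot z$ and once with $\dot y$ gives two scalar identities whose $\dot\lambda$-terms are $\lambda_t^{-1}\dot\lambda\langle g,\dot z\rangle$ and $\lambda_t^{-1}\dot\lambda\langle g,\dot y\rangle$ — equal by the differentiated constraint. Subtracting cancels $\dot\lambda$, and after using symmetry of $\mm$ the result collapses to
$$\dot z^\top H\dot z - \dot y^\top H\dot z \;=\; -\lambda_t\,\|\dot z - \dot y\|_\mm^2 \;\le\; 0.$$
Hence $\|\dot z\|_H^2 \le \langle\dot z,\dot y\rangle_H \le \|\dot z\|_H\,\|\dot y\|_H$ by Cauchy--Schwarz for the positive semidefinite form $H$, so $\|\dot z\|_H \le \|\dot y\|_H = \|x-v\|_H$, which is the claim. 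The one genuinely delicate point — the main obstacle to a fully rigorous write-up — is the regularity of $t\mapsto z_t$: constancy in the global-minimizer case, and $C^1$-smoothness with a differentiable multiplier in the active case. This is exactly where strict convexity and $\nabla f\in\mathrm{Im}(\mm)$ enter, to make the bordered Jacobian above invertible; everything downstream is routine linear algebra.
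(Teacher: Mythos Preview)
Your proof is correct and follows the same overall strategy as the paper: split into the interior/boundary cases, differentiate the KKT stationarity condition in $t$, eliminate the contribution of $\dot\lambda_t$, and finish with Cauchy--Schwarz in the $\hess f(z_t)$ inner product. The tactical difference is in how $\dot\lambda_t$ is handled. The paper substitutes $\lambda_t = \normmd{\nabla f(z_t)}/r$ explicitly before differentiating, takes a single inner product with $\mmd\frac{d}{dt}\nabla f(z_t)$, and uses Cauchy--Schwarz to \emph{bound} the resulting $\dot\lambda_t$ term by $\normmd{\frac{d}{dt}\nabla f(z_t)}^2$, so that it cancels by inequality against the left-hand side. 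You instead keep $\lambda_t$ implicit, differentiate the constraint $\normm{z_t-y_t}^2 = r^2$ separately to obtain $\langle g,\dot z\rangle = \langle g,\dot y\rangle$, and use this to cancel $\dot\lambda_t$ \emph{exactly} after taking inner products with both $\dot z$ and $\dot y$ and subtracting. Your route yields the sharper identity $\dot z^\top H\dot z - \dot y^\top H\dot z = -\lambda_t\normm{\dot z - \dot y}^2$ and avoids differentiating the quotient $\normmd{\nabla f(z_t)}/r$. You are also more explicit than the paper about the regularity of $t\mapsto z_t$, sketching the implicit function theorem argument; the paper simply differentiates without comment.
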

\begin{proof}
Let $t\in[0,1]$ be arbitrary. If $\norm{z_{t}-y_{t}}_{\mm}<r$, then
$z_{t}$ is the minimizer of $f$, i.e. $\grad f(z_{t})=0$ and $\frac{d}{dt}z_{t}=0$
yielding the result (as in this case the minimizer stays in the interior
for small perturbations of $y_{t}$). For the remainder
of the proof assume that $\norm{z_{t}-y_{t}}_{\mm}=r$, in which
case Lemma~\ref{lem:opt_characterization} yields that
\begin{equation}
\grad f(z_{t})=-\frac{\norm{\grad f(z_{t})}_{\mm^\dagger}}{r} \mm (z_{t}-y_{t})\,.\label{eq:lip_bound_1}
\end{equation}
Now, differentiating both sides with respect to $t$ yields that 
\begin{align}
\frac{d}{dt}\left(\grad f(z_{t})\right) & =-\frac{\innerProduct{\grad f(z_{t})}{ \mm^\dagger \frac{d}{dt}\left(\grad f(z_{t})\right)}}{r\norm{\grad f(z_{t})}_{\mm^\dagger}} \mm (z_{t}-y_{t})-\frac{1}{r}\norm{\grad f(z_{t})}_{\mm^\dagger} \mm \left(\frac{d}{dt}z_{t}-\left(x-v\right)\right).\label{eq:lip_bound_2}
\end{align}
Combining (\ref{eq:lip_bound_1}) and (\ref{eq:lip_bound_2}) and
taking an inner product of both sides with $\mm^\dagger \frac{d}{dt}(\grad f(z_{t}))$
yields that
\begin{align*}
\normFull{\frac{d}{dt}\left(\grad f(z_{t})\right)}_{\mm^\dagger}^{2} & =\frac{\innerProduct{\grad f(z_{t})}{\mm^\dagger \frac{d}{dt}\left(\grad f(z_{t})\right)}^{2}}{\norm{\grad f(z_{t})}_{\mm^\dagger}^{2}}-\frac{1}{r}\norm{\grad f(z_{t})}_{\mm^\dagger}\innerProduct{\frac{d}{dt}z_{t}-(x-v)}{\mm \mm^\dagger \frac{d}{dt}(\nabla f(z_{t}))}.
\end{align*}
Next, Cauchy-Schwarz implies $\innerProduct{\grad f(z_{t})}{\mm^\dagger \frac{d}{dt}\left(\grad f(z_{t})\right)}^{2}\leq\norm{\grad f(z_{t})}_{\mm^\dagger}^{2}\cdot\norm{\frac{d}{dt}(\grad f(z_{t}))}_{\mm^\dagger}^{2}$
, so the first two terms in the above display cancel. Rearranging
the last term yields
\[
\innerProduct{\frac{d}{dt}z_{t}}{\mm \mm^\dagger \frac{d}{dt}(\grad f(z_{t}))}\leq\innerProduct{x-v}{ \mm \mm^\dagger \frac{d}{dt}(\nabla f(z_{t}))}.
\]
Since $\grad f(z_{t})$ is in the image of $\mm$ for all $t$, $\frac{d}{dt}(\grad f(z_{t}))$ must also be in the image of $\mm$. Thus, we can drop the $\mm \mm^\dagger$ matrices from the above expression. Also as $\frac{d}{dt}(\grad f(z_{t}))=\hess f(z_{t})\frac{d}{dt}z_{t}$
, this simplifies to
\[
\normFull{\frac{d}{dt}z_{t}}_{\hess f(z_{t})}^{2}\leq\innerProduct{x-v}{\hess f(z_{t})\frac{d}{dt}z_{t}}\leq\normFull{\frac{d}{dt}z_{t}}_{\hess f(z_{t})}\cdot\norm{x-v}_{\hess f(z_{t})}.
\]
Dividing both sides by $\norm{\frac{d}{dt}z_{t}}_{\hess f(z_{t})}$
and applying $\frac{d}{dt}\grad f(z_{t})=\hess f(z_{t})\frac{d}{dt}z_{t}$
then yields the result.
\end{proof}

We now bound the Lipschitz constant of the function $g(\lambda) = \lambda\normmd{\nabla f(\ztl)}$, where we recall the definitions
\begin{equation}\label{eq:tlamdef}\begin{aligned}a_\lambda \defeq \frac{\lambda + \sqrt{\lam^2 + 4\lam A}}{2},\; t_\lam \defeq \frac{A}{A + a_\lam},\; \ytl \defeq t_\lam x + (1 - t_\lam) v,\; \ztl \defeq \min_{z\in\ballsett{\ytl}} f(z).\end{aligned}\end{equation}
\begin{lem}
	\label{lem:g-lip} Let $f$ be $L$-smooth in $\normm{\cdot}$. Assume that in \eqref{eq:tlamdef}, $\normm{x - x^*} \le D$ and $\normm{v - x^*} \le D$. For all $\lambda\ge0$,
	\[
	\left|\frac{d}{d\lambda}g(\lambda)\right|\le L(2D + r).
	\]
\end{lem}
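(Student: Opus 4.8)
The plan is to bound $g'(\lambda)$ by the chain rule applied to the composition $\lambda\mapsto t_\lambda\mapsto\ytl\mapsto\ztl\mapsto\normmd{\nabla f(\ztl)}$. Write $g(\lambda)=\lambda\,\rho(\lambda)$ with $\rho(\lambda)\defeq\normmd{\nabla f(\ztl)}$, so that $g'(\lambda)=\rho(\lambda)+\lambda\,\rho'(\lambda)$, and it suffices to control $\rho(\lambda)$ and $\lambda\rho'(\lambda)$ separately. By Lemma~\ref{lem:opt_characterization}, for each $\lambda$ either $\ztl$ globally minimizes $f$ (so $\nabla f(\ztl)=0$ and $g(\lambda)=0$), or $\normm{\ztl-\ytl}=r$; on the open set of $\lambda$ where the latter strictly holds, $\ztl$ depends differentiably on $\lambda$ by strict convexity and the implicit function theorem, while on the rest $g\equiv 0$. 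Since $g$ is continuous, it is enough to bound $|g'|$ on that open set.

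To bound $\rho(\lambda)$: since $\ytl=t_\lambda x+(1-t_\lambda)v$ is a convex combination of $x$ and $v$, we have $\normm{\ytl-x^*}\le D$, and since $\ztl\in\ballsett{\ytl}$ this gives $\normm{\ztl-x^*}\le D+r$; then $L$-smoothness and $\nabla f(x^*)=0$ yield $\rho(\lambda)=\normmd{\nabla f(\ztl)-\nabla f(x^*)}\le L(D+r)$.

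To bound $\lambda\rho'(\lambda)$: differentiating the seminorm and applying Cauchy--Schwarz in $\mmd$ gives $|\rho'(\lambda)|\le\normmd{\tfrac{d}{d\lambda}\nabla f(\ztl)}$, and by the chain rule $\tfrac{d}{d\lambda}\nabla f(\ztl)=\nabla^2f(\ztl)\,\tfrac{d}{dt}z_t\big|_{t=t_\lambda}\cdot t_\lambda'$, where $z_t$ is the curve of Lemma~\ref{lem:lip_bound} (with the same $x,v$). That lemma bounds $\normFull{\tfrac{d}{dt}z_t}_{\nabla^2f(\ztl)}\le\normm{x-v}_{\nabla^2 f(\ztl)}$. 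Converting between the $\mmd$, $\mm$, and $\nabla^2 f(\ztl)$ seminorms using $\nabla^2 f(\ztl)\preceq L\mm$ --- which gives $\normmd{\nabla^2f(\ztl)w}\le\sqrt L\,\norm{w}_{\nabla^2f(\ztl)}$ and $\norm{w}_{\nabla^2f(\ztl)}\le\sqrt L\,\normm{w}$ (care with pseudoinverses is needed; it reduces to $\nabla^2 f(\ztl)\,\mmd\,\nabla^2 f(\ztl)\preceq L\,\nabla^2 f(\ztl)$) --- together with $\normm{x-v}\le 2D$, yields $|\rho'(\lambda)|\le 2LD\,|t_\lambda'|$. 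It remains to show $\lambda|t_\lambda'|\le\tfrac14$: from the defining quadratic, $a_\lambda^2=\lambda(A+a_\lambda)$, whence $1-t_\lambda=\lambda/a_\lambda$ and, by implicit differentiation, $a_\lambda'(2a_\lambda-\lambda)=A+a_\lambda$; a short calculation then gives $t_\lambda'=\frac{\lambda-a_\lambda}{a_\lambda(2a_\lambda-\lambda)}$. Since $a_\lambda\ge\lambda\ge0$ we have $2a_\lambda-\lambda\ge a_\lambda$ and $\lambda(a_\lambda-\lambda)\le a_\lambda^2/4$ by AM--GM, so $\lambda|t_\lambda'|=\frac{\lambda(a_\lambda-\lambda)}{a_\lambda(2a_\lambda-\lambda)}\le\tfrac14$. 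Combining, $|g'(\lambda)|\le L(D+r)+2LD\cdot\tfrac14=L(\tfrac32 D+r)\le L(2D+r)$.

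The main obstacle is the $\lambda\rho'$ term: interfacing Lemma~\ref{lem:lip_bound}, which is stated in the Hessian-induced seminorm, with the $\mmd$-seminorm appearing in $g$, which forces the $\nabla^2 f\preceq L\mm$ comparison and careful pseudoinverse bookkeeping; and keeping the estimate $\lambda|t_\lambda'|\le\tfrac14$ tight, since the cruder bound $\lambda|t_\lambda'|\le1$ (from $a_\lambda\ge\lambda$ alone) would only give $|g'|\le L(3D+r)$.
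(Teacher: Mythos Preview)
Your proof is correct and follows essentially the same approach as the paper: decompose $g'(\lambda)=\rho(\lambda)+\lambda\rho'(\lambda)$, bound $\rho$ by smoothness and the triangle inequality, and bound $\lambda\rho'$ by combining Lemma~\ref{lem:lip_bound} with the norm comparison $\nabla^2 f\preceq L\mm$ and an estimate on $\lambda|t_\lambda'|$. The only notable difference is that you obtain the sharper bound $\lambda|t_\lambda'|\le\tfrac14$ via the implicit relation $a_\lambda^2=\lambda(A+a_\lambda)$ and AM--GM, whereas the paper computes $t_\lambda'$ directly and bounds $\lambda|t_\lambda'|\le\tfrac12$; this gives you the intermediate bound $L(\tfrac32 D+r)$ before you relax to the stated $L(2D+r)$, while the paper lands exactly on $L(2D+r)$.
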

\begin{proof}
We compute
\begin{align}
\frac{d}{d\lambda}g(\lambda) & =\norm{\grad f(\ztl)}_{\mm^\dagger}+\lambda\frac{\innerProduct{\grad f(\ztl)}{\mm^\dagger \nabla^2 f(\ztl)\left(\frac{d}{dt_{\lambda}}\ztl\right)}}{\norm{\grad f(\ztl)}_{\mm^\dagger}}\frac{d}{d\lambda}t_{\lambda}.\label{eq:needboundlip}
\end{align}
First, direct calculation yields
\[
\frac{d}{d\lambda}t_{\lambda}=-\frac{A}{(A+a_\lambda)^{2}}\frac{d}{d\lambda}a_{\lambda}=-\frac{A}{(A+a_\lambda)^2}\cdot\frac{1}{2}\left(1+ \left(\lambda^{2}+4A\lam\right)^{-1/2}\left(\lambda+2A\right)\right).
\]
Consequently, recalling the definition of $a_\lam$,
\begin{equation}
\begin{aligned}
\left|\lambda\frac{d}{d\lambda}t_{\lambda}\right| & = \left|\frac{2 A \lambda}{(2A + \lambda + \sqrt{\lambda^2 + 4 A \lambda})^2} \left(1 + \frac{\lambda+2A}{\sqrt{\lambda^2 + 4 A\lam}} \right)  \right| \\
& = \left|\frac{2 A \lambda}{(2A + \lambda + \sqrt{\lambda^2 + 4 A \lambda})\sqrt{\lambda^2 + 4 A\lam}} \right| \leq \frac{2 A \lambda}{\lambda^2 + 4 A\lam} \leq \frac{1}{2} .\label{eq:bound1lip}
\end{aligned}
\end{equation}
where we used that $A, \lambda > 0$.
Next, by triangle inequality and smoothness in the $\mm$-norm,
\begin{equation}
\norm{\nabla f(\ztl)}_{\mm^\dagger}\le L\norm{\ztl-x^{*}}_{\mm}\le L\left(\norm{\ztl-\ytl}_{\mm}+\norm{\ytl-x^{*}}_{\mm}\right)\le L(r+D).\label{eq:bound2lip}
\end{equation}
In the last inequality, we used convexity of norms and $\norm{x-x^{*}}_{\mm},\;\norm{v-x^{*}}_{\mm}\le D$.
The final bound we require is due to Lemma \ref{lem:lip_bound}: observe
\begin{equation}\begin{aligned}
\innerProduct{\nabla f(\ztl) \mm^\dagger}{\nabla^2 f(\ztl)\left(\frac{d}{dt_\lambda}\ztl\right)} &\le \norm{\grad f(\ztl)}_{\mm ^\dagger \nabla^2 f(\ztl) \mm^\dagger }\normFull{\frac{d}{dt_\lambda}\ztl}_{\nabla^2 f(\ztl)} \\
\le \sqrt{L} \norm{\nabla f(\ztl)}_{\mm^\dagger} \norm{x - v}_{\nabla^2 f(\ztl)} &\le L\norm{\nabla f(\ztl)}_{\mm^\dagger}\normm{x - v} \le 2 LD \norm{\nabla f(\ztl)}_{\mm^\dagger}.\label{eq:bound3lip}
\end{aligned}\end{equation}
The first inequality is by Cauchy-Schwarz, the second is due to Lemma \ref{lem:lip_bound} and $\mm^\dagger \nabla^2 f(\ztl) \mm^\dagger \preceq L \mm^\dagger$ by smoothness, and the third is again from smoothness with $\nabla^2 f(\ztl) \preceq L \mm$. Combining (\ref{eq:needboundlip}),
(\ref{eq:bound1lip}), (\ref{eq:bound2lip}), and (\ref{eq:bound3lip})
yields the claim. 
\end{proof}
We now prove Proposition~\ref{prop:line_search}.
\msoracleguarantee*
\begin{proof}
	This proof will require three bounds on the size of the parameter $\delta$ used in the ball 
	optimization oracle. We state them here, and show that the third implies the 
	other two. We require
	\begin{equation}\label{eq:deltabounds}\delta \le \min\left\{\frac{\eps}{2L(D + r)},\; \sqrt{\frac{2\eps}{L}},\; \frac{r}{12\left(1 + \frac{2L(D + r)r}{\eps}\right)}\right\}.\end{equation}
	The fact that the third bound implies the first is clear, and the second is 
	implied by the assumption $2LD^2 > \eps$. 
	
	Our goal is to first show that if $g(u) > r$, then we have an $\eps$-approximate minimizer; otherwise, we construct a range $[\ell, u]$ which contains some $\lambda$ with $g(\lambda) = r$, and we apply the Lipschitz condition Lemma~\ref{lem:g-lip} to prove correctness of our binary search. Recall that for every $\lambda$, the guarantees of $\oracleBall$ imply that $\normm{\ztl-\tztl}\le\delta$, and moreover 
	\[\normm{\tztl-x^{*}} \le\normm{\tztl-\ytl}+\normm{\ytl-x^{*}}\le D+r\]
	by convexity. Thus, if it holds that $\normmd{\nabla f(\tilde{z}_{t_{u}})}\le 
	r/u+L\delta$ in Line 7, then
	\[
	f(\tilde{z}_{t_{u}})-f(x^{*})\le\innerProduct{\nabla f(\tilde{z}_{t_{u}})}{\tilde{z}_{t_{u}}-x^{*}}\le\normmd{\nabla f(\tilde{z}_{t_{u}})}\left(D+r\right)\le\eps,
	\]
	for our choice of $u=2(D+r)r/\eps$ and $\delta\le\eps/(2L(D+r))$ \eqref{eq:deltabounds}.
	On the other hand, if $\normmd{\nabla f(\tilde{z}_{t_{u}})}\ge r/u+L\delta$,
	by Lipschitzness of the gradient and the guarantee $\normm{\tztl - \ztl} \le \delta$, we have $g(u)=u\normmd{\nabla f(z_{t_{u}})}\ge r$. Moreover,
	for $\ell=r/L(D+r)$, by Lipschitzness of the gradient from $x^*$,
	\[
	g(\ell)=\ell\normmd{\nabla f(z_{t_{\ell}})} \le\frac{r}{L(D+r)}(L(D+r))\le r.
	\]
	By continuity, it is clear that for some value $\lambda\in[\ell,u]$,
	$g(\lambda)=r$; we note the assumption $2LD^{2}>\eps$ guarantees
	that $\ell<u$, so the search range is valid. Next, if for some
	value of $\lambda$, $\ztl=x^{*}$, as long as $\delta\le\sqrt{2\eps/L}$,
	we have by smoothness
	\[
	f(\tztl)-f(x^{*})=f(\tztl)-f(\ztl)\le\frac{L\delta^2}{2}\le\eps.
	\]
	Otherwise, $\ztl$ is on the boundary of the ball around
	$\ytl$, so that we have the desired
	\[\normm{\tztl-\ytl}\ge r-\delta \ge \frac{11r}{12}.\]
	Moreover, \eqref{eq:lamdef}	implies
	\begin{align*}
	\normm{\tztl-(\ytl-\lambda\mmd\nabla f(\tztl))} & \le(1+L\lambda)\delta+\normm{\ztl-(\ytl-\lambda\mmd\nabla f(\ztl))}\\
	& =(1+L\lambda)\delta+\normm{\left(\lambda-\frac{r}{\normmd{\nabla f(\ztl)}}\right)\mmd\nabla f(\ztl)}\\
	& =(1+L\lambda)\delta+\normm{\left(\frac{g(\lambda)-r}{\normmd{\nabla f(\ztl)}}\right)\mmd\nabla f(\ztl)}\\
	& \le(1+Lu)\delta+\left|g(\lambda)-r\right|.
	\end{align*}
	So, as long as $\delta\le r/(12(1+Lu))$ and $|g(\lambda)-r|\le r/4$, we	have the desired $\half$-MS oracle guarantee
	\begin{align*}
	\normm{\tztl-(\ytl-\lambda\mmd\nabla f(\tztl))} & \le\frac{r}{12}+\frac{r}{4}\le\half\left(r-\delta\right)\\
	& \le\half\normm{\tztl-\ytl}.
	\end{align*}
	Thus, the algorithm can terminate whenever we can guarantee $|g(\lambda) 
	- r| \le r/4$. We can certify the value of $g(\lambda)$ via 
	$\lambda\normmd{\nabla f(\tztl)}$ up to additive error $L\lambda\delta \le 
	r/12$, so that  $|\lambda\normmd{\nabla f(\tztl)} - r| \le r/6$ implies 
	$|g(\lambda) - r| \le r/4$. Finally, let $\lambda^*$ be any value in $[\ell, 
	u]$ where $g(\lambda^*) = r$. By Lemma~\ref{lem:g-lip}, 
	\[\left|\lambda - \lambda^*\right| \le \frac{r}{12(L(2D+r))} \implies \left|g(\lambda) - r\right| \le \frac{r}{12} \implies |\lambda\normmd{\nabla f(\tztl)} - r| \le r/6,\text{ i.e. search terminates.}\]
	In conclusion, we can bound the number of calls required by Algorithm~\ref{alg:line-search-oracle} in executions of Lines 16 and 20 to $\oracleBall$ by
	\[\log\left((u - \ell)\cdot\left(\frac{r}{12(L(2D+r))}\right)^{-1}\right) \le \log\left(\frac{4Dr}{\eps} \cdot \frac{36LD}{r}\right).\]
\end{proof}

\mainclaimaccel*
\begin{proof}
	More specifically, we will return the point encountered in 
	Algorithm~\ref{alg:msaccel_framework} with the smallest function value, 
	in the case Proposition~\ref{prop:line_search} ever guarantees a point is 
	an $\eps$-approximate minimizer. Note that 
	Lemma~\ref{lem:domain_bound} implies that in each run of 
	Algorithm~\ref{alg:line-search-oracle}, it suffices to set $D = 
	3\sqrt{2}R$, where we recall (in its context) $\sqrt{2p_0} = \sqrt{2}R$, via 
	the proof of Proposition~\ref{prop:ball_constrained_error_bound}. 
	Recalling $R > r$, this implies that setting
	\[\delta = \frac{r}{12(1 + \frac{2L(D + r)r}{\eps})} \ge \frac{r}{12 + 
	\frac{126LRr}{\eps}}\]
	suffices in the guarantees of $\oracleBall$. Moreover, if the assumption 
	$\eps < 2LD^2$ in Proposition~\ref{prop:line_search} does not hold, 
	smoothness implies we may return any $x_k$. The oracle complexity 
	follows by combining Proposition~\ref{prop:line_search} with 
	Proposition~\ref{prop:ball_constrained_error_bound}.
\end{proof}

\section{Trust region subproblems}\label{app:trust-region}

We give the algorithm for solving the trust region subproblem below.
\begin{algorithm}
	\begin{algorithmic}[1]
		\caption{$\textsc{SolveTR}(\bx, r, g, \mh, \mm, 
		\Delta)$}\label{alg:tr-method}
		\STATE Let $0 < \mu \le L$ so $\mu \mm \preceq \mh \preceq 
		L\mm$, and let $\Delta > 0$. 
		\STATE $\hg \gets g - \mh \bx$
		\STATE $\ell \gets 0$, $u \gets \frac{\normmd{\hg}}{r}$, $\iota \gets 
		\frac{\Delta\mu^2}{\normmd{\hg}}$
		\IF{$\normms{\mh^\dagger \hg} \le r$}
		\RETURN $\mh^\dagger \hg$
		\ELSE
		\STATE $\lam \gets \frac{\ell+u}{2}$, $\lam^- \gets \lam - \iota$
		\WHILE{\NOT $(\normms{\left(\mh + \lam\mm\right)^{\dagger}\hg} 
		\le r)$ \AND $(r < \normms{\left(\mh + 
		\lam^-\mm\right)^{\dagger}\hg})$}\label{line-SolveTR-condition}
		\IF{$\normms{\left(\mh + \lam\mm\right)^{\dagger}\hg} \le r$}
		\STATE $u \gets \lam$, $\lam \gets \frac{\ell+u}{2}$, $\lam^- \gets 
		\lam-\iota$
		\ELSE
		\STATE $\ell \gets \lam$, $\lam \gets \frac{\ell+u}{2}$, $\lam^- 
		\gets \lam-\iota$
		\ENDIF
		\ENDWHILE
		\RETURN $\left(\mh+\lam \mm\right)^\dagger \hg$
		\ENDIF
	\end{algorithmic}
\end{algorithm}

For simplicity, we first focus on developing technical results for the trust region problem of the following form (below $\mathbf{0}$ is the origin)
\begin{equation}
\min_{x\in\ballsett{\mathbf{0}}}Q(x)\defeq-g^{\top}x+\half x^{\top}\mh x;\label{eq:qcqozero}
\end{equation}
our final guarantees will be obtained by an appropriate linear shift. All results in this section assume $\mu\mm \preceq \mh\preceq L\mm$ for some $0 < \mu \le L$, which in particular implies that $\mh$ and $\mm$ share a kernel. We first state a helpful monotonicity property which will be used throughout.
\begin{lem}
\label{lem:lammonotone}
	$\normm{(\mh + \lambda\mm)^\dagger g}$ is monotonically decreasing in $\lambda$, for any vector $g$.
\end{lem}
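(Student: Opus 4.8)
The plan is to reduce to a simultaneous diagonalization of $\mh$ and $\mm$ and then expand the squared seminorm into a sum of terms each of which is manifestly non-increasing in $\lambda$.

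First I would record the structural consequences of the standing assumption $\mu\mm\preceq\mh\preceq L\mm$ with $\mu>0$: it forces $\mh$ and $\mm$ to have the same kernel $\mathcal K$ (hence the same image $\mathcal K^\perp=\im(\mm)$), and both to be positive definite on $\mathcal K^\perp$. By the simultaneous diagonalization of two symmetric matrices, one positive definite, applied on $\mathcal K^\perp$, there is a basis $w_1,\dots,w_k$ of $\mathcal K^\perp$ (with $k=\operatorname{rank}(\mm)$) such that $w_i^\top\mm w_j=\indic{i=j}$ and $w_i^\top\mh w_j=d_i\,\indic{i=j}$ with each $d_i\in[\mu,L]$. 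Collecting the $w_i$ as columns of $W\in\R^{d\times k}$ and setting $\md=\diag(d_1,\dots,d_k)$, this reads $W^\top\mm W=\mI$, $W^\top\mh W=\md$; moreover $WW^\top\mm$ agrees with the identity on $\mathcal K^\perp$ and vanishes on $\mathcal K$, so (since $\mathcal K=(\mathcal K^\perp)^\perp$) $WW^\top\mm=\mm WW^\top$ is the orthogonal projection $P$ onto $\im(\mm)$, and in particular $\mm WW^\top\mm=\mm$.

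The key step is the pseudoinverse identity: for every $\lambda\ge0$,
\[
(\mh+\lambda\mm)^\dagger = W(\md+\lambda\mI)^{-1}W^\top .
\]
I would prove this by checking the Penrose conditions. Because the columns of $\mh W$ lie in $\im(\mh)=\im(\mm)$, we have $\mh W=\mm WW^\top\mh W=\mm W\md$, hence $(\mh+\lambda\mm)W=\mm W(\md+\lambda\mI)$; multiplying on the right by $(\md+\lambda\mI)^{-1}W^\top$ and using $\mm WW^\top\mm=\mm$ gives $(\mh+\lambda\mm)\,W(\md+\lambda\mI)^{-1}W^\top=\mm WW^\top=P$, which is exactly the projection onto $\im(\mh+\lambda\mm)=\im(\mm)$ (the image equality holds since $\mu\mm\preceq\mh+\lambda\mm\preceq(L+\lambda)\mm$). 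Since $W(\md+\lambda\mI)^{-1}W^\top$ is symmetric with image inside $\im(\mm)$, transposing $(\mh+\lambda\mm)X=P$ shows $X(\mh+\lambda\mm)=P$ as well, and the remaining identities $(\mh+\lambda\mm)X(\mh+\lambda\mm)=\mh+\lambda\mm$ and $X(\mh+\lambda\mm)X=X$ follow. This is the part demanding the most care, as one must keep track of the shared kernel/image of $\mh$ and $\mm$ at each manipulation.

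Finally I would substitute the formula and use $W^\top\mm W=\mI$:
\[
\normm{(\mh+\lambda\mm)^\dagger g}^2
= g^\top W(\md+\lambda\mI)^{-1}\bigl(W^\top\mm W\bigr)(\md+\lambda\mI)^{-1}W^\top g
= \sum_{i=1}^{k}\frac{(w_i^\top g)^2}{(d_i+\lambda)^2}.
\]
Each $d_i\ge\mu>0$, so on $[0,\infty)$ every summand $(w_i^\top g)^2/(d_i+\lambda)^2$ is non-increasing in $\lambda$ (strictly decreasing whenever $w_i^\top g\neq0$), hence so is the sum and hence so is its square root $\normm{(\mh+\lambda\mm)^\dagger g}$, which is the claim (the degenerate case $W^\top g=0$, i.e.\ $g\in\ker\mm$, simply gives the constant function $0$). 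The argument in fact shows monotonicity on all of $(-\mu,\infty)$, of which $\lambda\ge0$ is the relevant range.
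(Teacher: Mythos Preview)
Your proof is correct. Both your argument and the paper's ultimately reduce to the same commutativity/diagonalization idea, but the routes differ. The paper whitens by $\mmdh$, writing $(\mh+\lambda\mm)^\dagger=\mmdh(\tmh+\lambda\tmi)^\dagger\mmdh$ with $\tmh=\mmdh\mh\mmdh$ and $\tmi=\mm\mmd$; since $\tmh$ and $\tmi$ commute, Loewner-monotonicity of $(\tmh+\lambda\tmi)^{\dagger 2}$ is immediate without spelling out a basis. You instead perform a full simultaneous diagonalization on $\im(\mm)$, establish the pseudoinverse formula $W(\md+\lambda\mI)^{-1}W^\top$ via the Penrose conditions, and read off the explicit sum $\sum_i(w_i^\top g)^2/(d_i+\lambda)^2$. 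Your version is more hands-on and yields a concrete coordinate expression (and even the extension to $\lambda>-\mu$), at the cost of the extra verification of the Moore--Penrose axioms; the paper's is terser but leaves the pseudoinverse identity and the commutativity consequence implicit.
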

\begin{proof}
We will refer to the projection onto the column space of $\mm$, i.e. $\mm\mmd$, by $\tilde{\mI}$. To show the lemma, it suffices to prove that
\[(\mh + \lambda \mm)^\dagger\mm(\mh + \lambda \mm)^\dagger\]
is monotone in the Loewner order. Denoting $\tmh \defeq \mmdh \mh \mmdh$,
\begin{equation}\label{eq:inversehlm}\left(\mh + \lambda \mm\right)^\dagger = \left(\mmh\left(\tmh + \lambda \tmi\right)\mmh\right)^\dagger = \mmdh\left(\tmh + \lambda \tmi\right)^\dagger\mmdh.\end{equation}
Therefore, it suffices to show that
\[\mmdh\left(\tmh + \lambda \tmi\right)^\dagger\left(\tmh + \lambda \tmi\right)^\dagger\mmdh\]
is monotone in the Lowener order, which follows as $\tmh$ and $\tmi$ commute.
\end{proof}

Next, we characterize the minimizer to \eqref{eq:qcqozero}.
\begin{lem}
	\label{lem:quadchar}A solution to
	(\ref{eq:qcqozero}) is given by $\quadmin=(\mh+\lambda\mm)^\dagger g$
	for a unique value of $\lambda\ge0$. Unless $\lambda=0$, $\normm{\quadmin} =r$.
\end{lem}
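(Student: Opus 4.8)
The plan is to characterize \eqref{eq:qcqozero} through its KKT conditions, which here are both necessary and sufficient since $Q$ and the constraint $\normm{x}\le r$ are convex and Slater's condition holds (the origin is strictly feasible when $r>0$). First I would dispense with degeneracies. Because $\mh$ and $\mm$ share a kernel, the quadratic part $x^\top\mh x$ depends only on the $\im(\mm)$-component of $x$, which is exactly the component that $\normm{\cdot}$ controls; consequently $Q$ is linear --- hence unbounded below --- along $\ker(\mm)$ on the feasible set unless $g\in\im(\mm)$, so a solution exists only if $g\in\im(\mm)$, which I assume. If in addition $g=\mathbf 0$ then $\quadmin=\mathbf 0$ with $\lambda=0$ and the claim is immediate, so assume also $g\neq\mathbf 0$.

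Second, from the Lagrangian $-g^\top x+\half x^\top\mh x+\frac{\lambda}{2}(\normm{x}^2-r^2)$ with multiplier $\lambda\ge0$, stationarity in $x$ reads $(\mh+\lambda\mm)x=g$. Since $\mu\mm\preceq\mh$ with $\mu>0$, for every $\lambda\ge0$ we have $\mh+\lambda\mm\succeq\mu\mm$, so $\mh+\lambda\mm$ is positive definite on $\im(\mm)=\im(\mh+\lambda\mm)$ and, using $g\in\im(\mm)$, the point $x_\lambda:=(\mh+\lambda\mm)^\dagger g$ is a stationary point (the others differing by an element of $\ker\mm$). It therefore remains only to choose $\lambda\ge0$ so that $x_\lambda$ is feasible and complementary slackness $\lambda(\normm{x_\lambda}^2-r^2)=0$ holds; stationarity together with primal feasibility, $\lambda\ge0$, and complementary slackness is precisely the KKT system of this convex program, so any such $x_\lambda$ is a global minimizer.

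Third, I would analyze $\phi(\lambda):=\normm{x_\lambda}$ on $[0,\infty)$. Using the change of variables from the proof of Lemma~\ref{lem:lammonotone} --- set $\tmh:=\mmdh\mh\mmdh$ and $\tilde g:=\mmdh g$, so that by \eqref{eq:inversehlm} and $\mmdh\mm\mmdh=\mm\mmd=\tmi$ one gets $\phi(\lambda)^2=\normFull{(\tmh+\lambda\tmi)^\dagger\tilde g}_2^2$ --- and diagonalizing $\tmh$ on $\im(\mm)$ with eigenvalues $\mu_i\ge\mu>0$ and orthonormal eigenvectors $u_i$, I obtain $\phi(\lambda)^2=\sum_i\langle u_i,\tilde g\rangle^2/(\mu_i+\lambda)^2$. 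Since $g\neq\mathbf 0$ forces $\tilde g\neq\mathbf 0$, this is continuous, \emph{strictly} decreasing on $[0,\infty)$ (strengthening Lemma~\ref{lem:lammonotone}), and tends to $0$ as $\lambda\to\infty$. In particular $\lambda\mapsto x_\lambda$ is injective, while the minimizer of \eqref{eq:qcqozero} within $\im(\mm)$ is unique by strong convexity of $Q$ there, so \emph{at most one} $\lambda\ge0$ can make $x_\lambda$ optimal; this already gives the uniqueness of $\lambda$.

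Finally I would establish existence of a valid $\lambda$ by two cases. If $\phi(0)=\normm{\mh^\dagger g}\le r$, take $\lambda=0$: then $x_0=\mh^\dagger g$ is feasible, complementary slackness holds trivially, so $x_0$ solves \eqref{eq:qcqozero} and $\normm{\quadmin}\le r$. If $\phi(0)>r$, then $\mh^\dagger g$ is infeasible (so $\lambda=0$ is impossible), and by continuity, strict monotonicity and $\phi(\lambda)\to0$ there is a $\lambda^\star>0$ with $\phi(\lambda^\star)=r$; for this $\lambda^\star$ the point $x_{\lambda^\star}$ is feasible with the constraint active, hence satisfies all KKT conditions, so it solves \eqref{eq:qcqozero} with $\normm{\quadmin}=r$. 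Combining the two cases yields a (necessarily unique) $\lambda\ge0$ with $\quadmin=(\mh+\lambda\mm)^\dagger g$, and $\normm{\quadmin}=r$ whenever $\lambda\neq0$. I expect the main obstacle to be the pseudoinverse/seminorm bookkeeping --- justifying the reduction to $\im(\mm)$, promoting Lemma~\ref{lem:lammonotone} to strict monotonicity via the explicit spectral sum, and cleanly handling the $\ker(\mm)$ and $g=\mathbf 0$ degeneracies --- rather than the KKT argument itself, which is routine.
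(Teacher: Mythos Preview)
Your proposal is correct and follows essentially the same route as the paper: write the Lagrangian, read off the stationarity condition $(\mh+\lambda\mm)x=g$, restrict to $\im(\mm)$, and use monotonicity of $\lambda\mapsto\normm{(\mh+\lambda\mm)^\dagger g}$ for uniqueness. The paper's version is a three-line sketch that appeals directly to Lemma~\ref{lem:lammonotone} for uniqueness and to complementary slackness for the boundary claim, whereas you additionally treat the $g\notin\im(\mm)$ and $g=\mathbf 0$ degeneracies, promote the monotonicity to \emph{strict} via the spectral sum (which is indeed what uniqueness requires), and invoke the intermediate value theorem explicitly for existence---all of this is extra care rather than a different idea.
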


\begin{proof}
	By considering the optimality conditions of the Lagrange dual problem
	\[
	\min_{x}\max_{\lambda\ge0}-g^{\top}x+\half x^{\top}\mh 
	x+\frac{\lambda}{2}\left(x^\top \mm x-r^{2}\right),
	\]
	either $\lambda=0$ and the minimizer $\mh^\dagger g$ is in $\ballsett{0}$,
	or there is $\quadmin=(\mh+\lambda\mm)^\dagger g$ on the region boundary (linear shifts in the kernel of $\mm$ do not affect the $\mm$ norm constraint or the objective, so we may restrict to the column space without loss of generality). Uniqueness of $\lambda$ then follows from Lemma~\ref{lem:lammonotone}.
\end{proof}

Next, we bound how tightly we must approximate the value $\lambda$
in order to obtain an approximate minimizer to (\ref{eq:qcqo}).
\begin{lem}
	\label{lem:lipquad}Suppose $g \in \textup{Im}(\mm)$, and $\normm{\mh^{\dagger}g}>r$.	Then, for $\lambda^{*}>0$ such that $\normm{(\mh+\lambda^{*}\mm)^{\dagger}g}=r$,
	and any $\lambda > 0$ such that 
	$|\lambda-\lambda^{*}|\le\frac{\Delta\mu^{2}}{\normmd{g}}$,
	we have 
	\begin{equation}
	\normm{\left(\mh+\lambda\mm\right)^{\dagger}g-\quadmin}\le\Delta.\label{eq:lipquad}
	\end{equation}
\end{lem}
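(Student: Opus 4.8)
The plan is to reduce the claim to a routine resolvent estimate on the subspace $\im(\mm)$, reusing the change of basis from the proof of Lemma~\ref{lem:lammonotone}. Recall from Lemma~\ref{lem:quadchar} that, since $\normm{\mh^\dagger g} > r$, we have $\quadmin = (\mh + \lambda^*\mm)^\dagger g$ for the unique $\lambda^*>0$ with $\normm{(\mh+\lambda^*\mm)^\dagger g} = r$. Write $\tmi \defeq \mm\mmd$ for the orthogonal projection onto $\im(\mm)$ and $\tmh \defeq \mmdh\mh\mmdh$, so that by \eqref{eq:inversehlm} we have $(\mh + \nu\mm)^\dagger = \mmdh(\tmh + \nu\tmi)^\dagger\mmdh$ for every $\nu \ge 0$. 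Since $\mu\mm \preceq \mh \preceq L\mm$, the restriction of $\tmh$ to $\im(\mm)$ has spectrum in $[\mu, L]$ and $\tmh$ vanishes on $\ker\mm$, so $\tmh + \nu\tmi$ restricted to $\im(\mm)$ is invertible with spectrum in $[\mu + \nu, L + \nu]$. I would also record the two elementary identities $\normm{\mmdh u} = \ltwo{u}$ for $u \in \im(\mm)$ (using $\mmdh\mm\mmdh = \tmi$) and $\normmd{g} = \ltwo{\mmdh g}$ (using $\mmd = \mmdh\mmdh$ and $g \in \im(\mm)$).

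With $w \defeq \mmdh g \in \im(\mm)$, these identities give $(\mh+\nu\mm)^\dagger g = \mmdh(\tmh+\nu\tmi)^\dagger w$ with $(\tmh+\nu\tmi)^\dagger w \in \im(\mm)$, and hence $\normm{(\mh+\nu\mm)^\dagger g} = \ltwo{(\tmh+\nu\tmi)^\dagger w}$. Next I would apply the resolvent identity on $\im(\mm)$, namely $(\tmh + \lambda\tmi)^\dagger - (\tmh + \lambda^*\tmi)^\dagger = (\lambda^* - \lambda)(\tmh + \lambda\tmi)^\dagger(\tmh + \lambda^*\tmi)^\dagger$ (valid since all three operators are functions of $\tmh$ and so commute, and both sides vanish on $\ker\mm$), to obtain
\[
\normm{(\mh+\lambda\mm)^\dagger g - \quadmin} = |\lambda - \lambda^*| \cdot \ltwo{(\tmh + \lambda\tmi)^\dagger(\tmh + \lambda^*\tmi)^\dagger w}.
\]
Then I bound the operator norm of $(\tmh + \lambda\tmi)^\dagger$ on $\im(\mm)$ by $1/(\mu + \lambda) \le 1/\mu$, and likewise $\ltwo{(\tmh + \lambda^*\tmi)^\dagger w} \le \ltwo{w}/\mu = \normmd{g}/\mu$; multiplying these yields $\normm{(\mh+\lambda\mm)^\dagger g - \quadmin} \le |\lambda - \lambda^*|\,\normmd{g}/\mu^2$, and the hypothesis $|\lambda - \lambda^*| \le \Delta\mu^2/\normmd{g}$ immediately gives \eqref{eq:lipquad}.

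The main obstacle is purely bookkeeping with the pseudoinverses: one must check that $(\mh + \nu\mm)^\dagger g$ lands in $\im(\mm)$ (it does, since $g \in \im(\mm)$ and $\mh + \nu\mm$ preserves $\im(\mm)$), so that $(\tmh + \nu\tmi)^\dagger$ genuinely acts as an inverse on that subspace and the norm conversions above are exact equalities rather than inequalities. I would also note that the hypotheses $\normm{\mh^\dagger g} > r$ and $\normm{(\mh + \lambda^*\mm)^\dagger g} = r$ are not needed for the estimate itself — they serve only to identify $\quadmin$ with $(\mh + \lambda^*\mm)^\dagger g$ via Lemma~\ref{lem:quadchar}; in fact one could sharpen the intermediate step to $\normm{(\mh+\lambda\mm)^\dagger g - \quadmin} \le |\lambda - \lambda^*|\, r/\mu$ using $\ltwo{(\tmh + \lambda^*\tmi)^\dagger w} = \normm{(\mh+\lambda^*\mm)^\dagger g} = r$, but the stated bound in terms of $\normmd{g}$ is all the algorithm requires.
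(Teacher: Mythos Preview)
Your proof is correct and takes essentially the same approach as the paper: both pass to the $\tmh=\mmdh\mh\mmdh$ representation on $\im(\mm)$, factor the difference of resolvents as $(\lambda^*-\lambda)(\tmh+\lambda\tmi)^\dagger(\tmh+\lambda^*\tmi)^\dagger$, and bound each factor by $1/\mu$ via the spectral lower bound. The paper carries this out by explicitly diagonalizing $\tmh$ and summing coordinate-wise, whereas you invoke the operator resolvent identity directly; the computations are the same in substance.
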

\begin{proof}
	We follow the notation of Lemma~\ref{lem:lammonotone}. Recalling \eqref{eq:inversehlm}, we expand
	\begin{equation}\label{eq:mess}\normm{\left(\mh+\lambda\mm\right)^{\dagger}g-\quadmin}^2 = \tg^\top \left(\left(\tmh + \lambda\tmi\right)^{\dagger} - \left(\tmh + \lambda^*\tmi\right)^{\dagger}\right)^2 \tg.\end{equation}
	Here, we defined $\tg = \mmdh g$. Note that $\norm{\tg}_2^2 = \normmd{g}^2$, where we used $g \in \textup{Im}(\mm)$. Without loss of generality, since $\tmh+\lambda\tmi$
	commute for all $\lam$ therefore simultaneously diagonalizable, suppose we are in the basis where $\tmh$ is diagonal and
	has diagonal entries $\left\{ h_{i}\right\} _{i\in[d]}$. Expanding the right hand side of (\ref{eq:mess}), we have
	\begin{align*}
	\sum_{i\in[d]}\tg_{i}^2\left(\frac{1}{h_{i}+\lambda}-\frac{1}{h_{i}+\lambda^{*}}\right)^{2}
	 & 
	=\sum_{i\in[d]}\tg_{i}^{2}\left(\frac{(\lambda^{*}-\lambda)^{2}}{(h_{i}+\lambda)^{2}(h_{i}+\lambda^{*})^{2}}\right)\\
	& 
	\le\sum_{i\in[d]}\frac{\tg_{i}^{2}}{\mu^{4}}\left(\frac{\Delta\mu^{2}}{\norm 
	\tg_{\mm^\dagger}}\right)^{2}\le\Delta^{2}.
	\end{align*}
	In the last inequality, note that whenever $h_i \neq 0$, it is at least $\mu$ by strong convexity in $\|\cdot\|_{\mm}$, and whenever $h_i$ is zero, so is $\tg_i$, by the assumption on $g$ and the fact that $\mm$ and $\mh$ share a kernel.
\end{proof}
Finally, by combining these building blocks, we obtain a procedure
for solving (\ref{eq:qcqo}) to high accuracy.

\ghsolve*
\begin{proof}
	First, for $\hg \defeq g - \mh \bx$, we have the equivalent problem
	\[
	\argmin_{\normm{y}\le r}-g^{\top}(y+\bx)+\half(y+\bx)^{\top}\mh(y+\bx)=\argmin_{y \in \ballsett{0}}-\hg^{\top}y+\half y^{\top}\mh y.
	\]
	Following Lemma \ref{lem:quadchar}, in Line 4 we verify whether for the optimal solution, $\lambda=0$, using one linear system solve. If not, by monotonicity of $\normm{(\mh+\lambda\mm)^{\dagger}\hg}$ in $\lambda$ (Lemma~\ref{lem:lammonotone}),
	it is clear that the value $\lambda^{*}$ corresponding to the solution lies in the range $[\ell, u] = [0,\normmd{\hg}/r]$, by
	\[\normm{\left(\mh + \frac{\normmd{\hg}}{r}\mm\right)^{\dagger}\hg}^2 \le r^2.\] 
	This follows from e.g. the characterization \eqref{eq:inversehlm}. Therefore, Lemma \ref{lem:lipquad}
	shows that it suffices to perform a binary search over this region
	to find a value $\lambda$ with additive error $\iota=\frac{\Delta\mu^{2}}{\normmd{\hat{g}}}$
	to output a solution $\tx$ of the desired accuracy. We note that we may check feasibility in $ \ballsett{0}$ by computing the value of $\normm{(\mh+\lambda\mm)^\dagger \hg}$ due to Lemma~\ref{lem:quadchar}, and it suffices to output the
	larger value of $\lambda$ amongst the endpoint of the interval of length $\iota$ containing $\lambda^{*}$, reflecting our termination condition in Line~\ref{line-SolveTR-condition}.
\end{proof} %
\section{Accelerated Newton method}
\label{app:newton-appendix}
This section gives the guarantees of Algorithm~\ref{alg:nesterov_h}, and in particular a proof of Theorem~\ref{thm:ballopt}. Throughout, assume $\mu\mm \preceq \mh \preceq L\mm$, where $\mh = \nabla^2 f(\bx)$. We note that Line~\ref{line:AGD-solve-sub} of Algorithm~\ref{alg:nesterov_h} is approximately implementing the step
\begin{equation}\label{eq:idealstep}
\begin{aligned}z_{k + 1}^{\textup{ideal}} &\gets \argmin_{z\in\ballset}\left\{\innerProduct{\nabla f(y_k)}{z} + \frac{1-\alpha}{2}\norm{z - z_k}_{\mh}^2 + \frac{\alpha}{2}\norm{z - y_k}_{\mh}^2\right\}\\
&=\argmin_{z\in\ballset}\left\{\innerProduct{\nabla f(y_k) - (1 - \alpha)\mh z_k - \alpha \mh y_k}{z} +\half z^\top \mh z\right\},\end{aligned}\end{equation}
with the guarantee $\normm{z_{k + 1}^{\textup{ideal}} - z_{k + 1}} \le \Delta$. Throughout, we denote $\ballmin$ as the minimizer of $f$ in $\ballset$.
\begin{lem}
	\label{lem:mirrdescexact}Consider a single iteration of Algorithm
	\ref{alg:nesterov_h} from a pair of points $x_{k},z_{k}$. We have
	\begin{align*}
	f(y_{k})+\innerProduct{\nabla 
	f(y_{k})}{z_{k + 1}^{\textup{ideal}}-y_{k}}+\frac{\alpha}{2}\norm{y_{k}-z_{k + 1}^{\textup{ideal}}}_{\mh}^{2}+\frac{1-\alpha}{2}\norm{z_{k}-z_{k + 1}^{\textup{ideal}}}_{\mh}^{2}\\
	\le 
	f(\ballmin)+\frac{1-\alpha}{2}\norm{z_{k}-\ballmin}_{\mh}^{2}-\half\norm{z_{k + 1}^{\textup{ideal}}-\ballmin}_{\mh}^{2}.
	\end{align*}
\end{lem}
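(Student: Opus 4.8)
The plan is to combine the first-order optimality of the constrained prox step $z_{k + 1}^{\textup{ideal}}$ with a second-order lower bound on $f(\ballmin)$ that comes from Hessian stability. Introduce the quadratic
\[
\psi(z) \defeq \innerProduct{\nabla f(y_k)}{z} + \frac{1 - \alpha}{2}\norm{z - z_k}_{\mh}^2 + \frac{\alpha}{2}\norm{z - y_k}_{\mh}^2,
\]
so that $z_{k + 1}^{\textup{ideal}} = \argmin_{z \in \ballset}\psi(z)$ by \eqref{eq:idealstep}. The Hessian of $\psi$ equals $(1 - \alpha)\mh + \alpha\mh = \mh$, so $\psi$ is exactly $1$-strongly convex in $\norm{\cdot}_{\mh}$; together with the first-order optimality inequality $\innerProduct{\nabla \psi(z_{k + 1}^{\textup{ideal}})}{u - z_{k + 1}^{\textup{ideal}}} \ge 0$ valid for every $u$ in the convex set $\ballset$, this yields the standard prox bound $\psi(u) \ge \psi(z_{k + 1}^{\textup{ideal}}) + \half\norm{u - z_{k + 1}^{\textup{ideal}}}_{\mh}^2$ for all $u \in \ballset$.

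Next I would instantiate this bound at $u = \ballmin$, which lies in $\ballset$ by definition, expand both sides with the formula for $\psi$, cancel the common term $\innerProduct{\nabla f(y_k)}{y_k}$, and add $f(y_k)$ to both sides. Grouping all terms involving $z_{k + 1}^{\textup{ideal}}$ on the left reproduces precisely the left-hand side of the claim, now bounded above by $f(y_k) + \innerProduct{\nabla f(y_k)}{\ballmin - y_k} + \tfrac{\alpha}{2}\norm{\ballmin - y_k}_{\mh}^2 + \tfrac{1 - \alpha}{2}\norm{z_k - \ballmin}_{\mh}^2 - \half\norm{z_{k + 1}^{\textup{ideal}} - \ballmin}_{\mh}^2$. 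Since the last two of these terms already appear on the right-hand side of the claim, it remains only to show $f(y_k) + \innerProduct{\nabla f(y_k)}{\ballmin - y_k} + \tfrac{\alpha}{2}\norm{\ballmin - y_k}_{\mh}^2 \le f(\ballmin)$.

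For this last step I would apply the exact second-order Taylor expansion of $f$ along the segment from $y_k$ to $\ballmin$: there is a point $w$ on that segment with $f(\ballmin) = f(y_k) + \innerProduct{\nabla f(y_k)}{\ballmin - y_k} + \half(\ballmin - y_k)^\top \nabla^2 f(w)(\ballmin - y_k)$. Both endpoints lie in $\ballset$: $\ballmin$ by definition, and $y_k$ because $x_0 = z_0 = \bx \in \ballset$, $\textsc{SolveTR}$ returns a point in $\ballset$ (Proposition~\ref{prop:ghsolve}), and $x_{k + 1}$ is a convex combination of $z_{k + 1}$ and $x_k$, so a one-line induction shows every $x_k, z_k$---hence their convex combination $y_k$---stays in $\ballset$; convexity of $\ballset$ then puts $w \in \ballset$, i.e., $\normm{w - \bx} \le r$. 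Now $(r, c)$-Hessian stability with center $\bx$ gives $\nabla^2 f(w) \succeq c^{-1}\nabla^2 f(\bx) = \alpha\mh$ (using $\alpha = c^{-1}$ and $\mh = \nabla^2 f(\bx)$), so the Taylor identity yields the required inequality and the proof is complete. The computation is a routine instance of the inexact-acceleration potential argument; the only place where the problem structure really enters---and hence where some care is needed---is this last lower bound on $f(\ballmin)$, which works precisely because the prox step's strong-convexity parameter is chosen to match the stability constant ($\alpha = c^{-1}$) and because the whole segment $[y_k, \ballmin]$ remains within the stability radius $r$ of the center $\bx$.
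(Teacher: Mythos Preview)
Your proof is correct and follows essentially the same two-step route as the paper: first-order optimality of the constrained prox step (you phrase it via $1$-strong convexity of $\psi$ in $\norm{\cdot}_{\mh}$, the paper via the three-point identity applied twice --- these are algebraically equivalent), followed by the Hessian-stability lower bound $f(y_k) + \innerProduct{\nabla f(y_k)}{\ballmin - y_k} + \tfrac{\alpha}{2}\norm{\ballmin - y_k}_{\mh}^2 \le f(\ballmin)$ with $\alpha = c^{-1}$. Your write-up is in fact slightly more careful, since you explicitly verify that $y_k \in \ballset$ (so the segment $[y_k,\ballmin]$ stays within the stability radius), a detail the paper's ``using strong convexity'' leaves implicit.
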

\begin{proof}
	By the first-order optimality conditions of $z_{k+1}^{\textup{ideal}}$ with respect
	to $\ballmin$,  
	\begin{align*}
	\innerProduct{\nabla f(y_{k})}{z_{k + 1}^{\textup{ideal}}-\ballmin} & 
	\le\frac{1-\alpha}{2}\left(\norm{z_{k}-\ballmin}_{\mh}^{2}-\norm{z_{k}-z_{k + 1}^{\textup{ideal}}}_{\mh}^{2}\right)\\
	& 
	+\frac{\alpha}{2}\left(\norm{y_{k}-\ballmin}_{\mh}^{2}-\norm{y_{k}-z_{k+1}^{\textup{ideal}}}_{\mh}^{2}\right)-\half\norm{z_{k+1}^{\textup{ideal}}-\ballmin}_{\mh}^{2}.
	\end{align*}
	Here, we twice-used the well-known identity 
	$\innerProduct{\mh(z_{k+1}^{\textup{ideal}}-x)}{z_{k+1}^{\textup{ideal}}-\ballmin}=\half\norm{z_{k+1}^{\textup{ideal}}-\ballmin}_{\mh}^{2}+\half\norm{z_{k+1}^{\textup{ideal}}-x}_{\mh}^{2}-\half\norm{x-\ballmin}_{\mh}^{2}$.
	Rearranging this and using strong convexity, where we recall 
	$\alpha=c^{-1}$,
	\begin{align*}
	f(y_{k})+\innerProduct{\nabla 
	f(y_{k})}{z_{k+1}^{\textup{ideal}}-y_{k}}+\frac{\alpha}{2}\norm{y_{k}-z_{k+1}^{\textup{ideal}}}_{\mh}^{2}+\frac{1-\alpha}{2}\norm{z_{k}-z_{k+1}^{\textup{ideal}}}_{\mh}^{2}\\
	\le\left(f(y_{k})+\innerProduct{\nabla 
	f(y_{k})}{\ballmin-y_{k}}+\frac{\alpha}{2}\norm{y_{k}-\ballmin}_{\mh}^{2}\right)+\frac{1-\alpha}{2}\norm{z_{k}-\ballmin}_{\mh}^{2}-\frac{1}{2}\norm{z_{k+1}^{\textup{ideal}}-\ballmin}_{\mh}^{2}\\
	\le 
	f(\ballmin)+\frac{1-\alpha}{2}\norm{z_{k}-\ballmin}_{\mh}^{2}-\half\norm{z_{k+1}^{\textup{ideal}}-\ballmin}_{\mh}^{2}.
	\end{align*}
\end{proof}
Next, we modify the guarantee of Lemma \ref{lem:mirrdescexact} to
tolerate an inexact step on the point $z_{k+1}$. We use the following
lemma.
\begin{lem}
	\label{lem:approxfoo}Suppose the convex function $h$ is $L$-smooth
	in $\normm{\cdot}$ in a region $\xset$ with bounded $\normm{\cdot}$
	diameter $2r$, and $x_{h}$ is the minimizer of $h$ over $\xset$.
	Then for $\hx$ with
	\[
	\normm{\hx-x_{h}}\le\Delta,\;\normmd{\nabla h(\hx)}\le G,
	\]
	and $\nabla h(\hx), \nabla h(x_h) \in \textup{Im}(\mm)$, we have for all $x\in\xset$, $\innerProduct{\nabla 
	h(\hx)}{\hx-x}\le2L\Delta r+G\Delta$.
\end{lem}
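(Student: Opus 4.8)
The plan is to bound $\innerProduct{\nabla h(\hx)}{\hx - x}$ for $x \in \xset$ by inserting the constrained minimizer $x_h$ and then, where it helps, replacing $\nabla h(\hx)$ by $\nabla h(x_h)$ so that first-order optimality can be used. Concretely, I would decompose
\[
\innerProduct{\nabla h(\hx)}{\hx - x} = \innerProduct{\nabla h(\hx)}{\hx - x_h} + \innerProduct{\nabla h(x_h)}{x_h - x} + \innerProduct{\nabla h(\hx) - \nabla h(x_h)}{x_h - x},
\]
and bound the three terms in turn.

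First I would handle the term $\innerProduct{\nabla h(\hx)}{\hx - x_h}$: since $\nabla h(\hx) \in \textup{Im}(\mm)$, the seminorms $\normm{\cdot}$ and $\normmd{\cdot}$ are dual along $\textup{Im}(\mm)$, so Cauchy--Schwarz gives $\innerProduct{\nabla h(\hx)}{\hx - x_h} \le \normmd{\nabla h(\hx)} \normm{\hx - x_h} \le G\Delta$, using the two hypotheses on $\hx$. For the second term, $\xset$ is convex (in our application $\xset = \ballset$) and $x_h$ minimizes $h$ over $\xset$, so the first-order optimality condition $\innerProduct{\nabla h(x_h)}{x - x_h} \ge 0$ holds for every $x \in \xset$; hence $\innerProduct{\nabla h(x_h)}{x_h - x} \le 0$. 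For the third term, Cauchy--Schwarz again (legitimate since $\nabla h(\hx) - \nabla h(x_h) \in \textup{Im}(\mm)$) bounds it by $\normmd{\nabla h(\hx) - \nabla h(x_h)} \normm{x_h - x}$, and then $L$-smoothness of $h$ in $\normm{\cdot}$ controls the first factor by $L\normm{\hx - x_h} \le L\Delta$, while the diameter bound on $\xset$ controls the second by $2r$. Summing the three bounds gives $\innerProduct{\nabla h(\hx)}{\hx - x} \le G\Delta + 2L\Delta r$, which is the claim.

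The only genuine points requiring care are bookkeeping ones: verifying that the $\normm{\cdot}$/$\normmd{\cdot}$ Cauchy--Schwarz step is valid (this is exactly why the image hypotheses on the two gradients appear in the statement), and confirming that the $L$-smoothness estimate on $\nabla h(\hx) - \nabla h(x_h)$ is available, which it is since $\hx$ lies within $\normm{\cdot}$-distance $\Delta$ of $x_h \in \xset$ so the relevant segment stays in (a negligible enlargement of) the region where smoothness is assumed. I do not anticipate any substantive obstacle beyond this.
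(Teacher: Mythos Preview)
Your proof is correct and matches the paper's argument essentially verbatim: the paper uses the same three-term decomposition and bounds the terms by first-order optimality, smoothness plus the diameter bound, and Cauchy--Schwarz, respectively. Your extra care in justifying the $\normm{\cdot}/\normmd{\cdot}$ duality via the image hypotheses is more explicit than the paper, but otherwise the arguments are identical.
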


\begin{proof}
	First-order optimality of $x_{h}$ against $x\in\xset$ implies
	$\innerProduct{\nabla h(x_{h})}{x_{h}-x}\le0$. The conclusion follows:
	\begin{align*}
	\innerProduct{\nabla h(\hx)}{\hx-x} & =\innerProduct{\nabla 
	h(x_h)}{x_h-x}+\innerProduct{\nabla h(\hx)-\nabla 
	h(x_h)}{x_h-x}+\innerProduct{\nabla h(\hx)}{\hx-x_h}\\
	& \le0+2L\Delta r+G\Delta.
	\end{align*}
\end{proof}
Putting together Lemma \ref{lem:mirrdescexact} and Lemma \ref{lem:approxfoo} we have the following corollary.
\begin{cor}
	\label{corr:mirrdescinexact}Consider a single iteration of Algorithm
	\ref{alg:nesterov_h} from a pair of points $x_{k},z_{k}$.
	Also, assume that $\normm{\bx-x^{*}}\le D$, where $x^{*}$ is
	the global optimizer of $f$. Then,
	\begin{align*}
	f(y_{k})+\innerProduct{\nabla 
	f(y_{k})}{z_{k+1}-y_{k}}+\frac{\alpha}{2}\norm{y_{k}-z_{k+1}}_{\mh}^{2}+\frac{1-\alpha}{2}\norm{z_{k}-z_{k+1}}_{\mh}^{2}\\
	\le 
	f(\ballmin)+\frac{1-\alpha}{2}\norm{z_{k}-\ballmin}_{\mh}^{2}-\half\norm{z_{k+1}-\ballmin}_{\mh}^{2}+L\Delta(5r + D).
	\end{align*}
\end{cor}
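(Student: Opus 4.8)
The plan is to re-run the proof of Lemma~\ref{lem:mirrdescexact}, replacing the exact first-order optimality of $z_{k+1}^{\textup{ideal}}$ over $\ballset$ by the approximate optimality that Lemma~\ref{lem:approxfoo} provides for $z_{k+1}$. Concretely, set
\[
h(z)\defeq\innerProduct{\nabla f(y_k)}{z}+\frac{1-\alpha}{2}\norm{z-z_k}_{\mh}^{2}+\frac{\alpha}{2}\norm{z-y_k}_{\mh}^{2},
\]
so by~\eqref{eq:idealstep} we have $z_{k+1}^{\textup{ideal}}=\argmin_{z\in\ballset}h(z)$, while the guarantee of $\textsc{SolveTR}$ in Proposition~\ref{prop:ghsolve} gives $z_{k+1}\in\ballset$ with $\normm{z_{k+1}-z_{k+1}^{\textup{ideal}}}\le\Delta$. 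Since $\nabla^2 h\equiv\mh\preceq L\mm$, the function $h$ is convex and $L$-smooth in $\normm{\cdot}$, and $\nabla h(z)=\nabla f(y_k)+\mh(z-(1-\alpha)z_k-\alpha y_k)\in\textup{Im}(\mm)$ for all $z$ (as $\nabla f(y_k)\in\textup{Im}(\mm)$ by $\mu$-strong convexity together with $\nabla f(x^*)=0$), so Lemma~\ref{lem:approxfoo} is applicable.

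First I would check that every iterate lies in $\ballset$: $x_0=z_0=\bx\in\ballset$, $\textsc{SolveTR}$ returns a point of $\ballsett{\bx}=\ballset$, and (since $\alpha=c^{-1}\in(0,1]$) both $y_k$ and $x_{k+1}$ are convex combinations of points already shown to lie in $\ballset$, so a one-line induction gives $x_k,z_k,y_k,z_{k+1}\in\ballset$; hence any two of them are at $\normm{\cdot}$-distance $\le 2r$. Next I would bound $G\defeq\normmd{\nabla h(z_{k+1})}$: $L$-smoothness of $f$ with $\nabla f(x^*)=0$ gives $\normmd{\nabla f(y_k)}\le L\normm{y_k-x^*}\le L(r+D)$, and $\mh\preceq L\mm$ gives $\normmd{\mh w}\le\sqrt{L}\norm{w}_{\mh}\le L\normm{w}$, so writing $z_{k+1}-(1-\alpha)z_k-\alpha y_k=(1-\alpha)(z_{k+1}-z_k)+\alpha(z_{k+1}-y_k)$ yields $\normmd{\mh(z_{k+1}-(1-\alpha)z_k-\alpha y_k)}\le 2Lr$, hence $G\le L(3r+D)$. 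Then Lemma~\ref{lem:approxfoo} applied with $\xset=\ballset$ (whose $\normm{\cdot}$-diameter is $2r$), $\hx=z_{k+1}$, $x_h=z_{k+1}^{\textup{ideal}}$ and $x=\ballmin$ gives
\[
\innerProduct{\nabla h(z_{k+1})}{z_{k+1}-\ballmin}\le 2L\Delta r+G\Delta\le L\Delta(5r+D),
\]
which is exactly the extra slack we are permitted in the corollary.

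The remainder is a transcription of the proof of Lemma~\ref{lem:mirrdescexact}: expanding $\nabla h(z_{k+1})=\nabla f(y_k)+(1-\alpha)\mh(z_{k+1}-z_k)+\alpha\mh(z_{k+1}-y_k)$ in the last display and applying the three-point identity $\innerProduct{\mh(a-b)}{a-c}=\half\norm{a-c}_{\mh}^{2}+\half\norm{a-b}_{\mh}^{2}-\half\norm{b-c}_{\mh}^{2}$ twice produces the same chain of inequalities as in the exact case, but with $z_{k+1}^{\textup{ideal}}$ replaced by $z_{k+1}$ and an added $L\Delta(5r+D)$ on the right; rearranging and using the strong-convexity-type bound $f(\ballmin)\ge f(y_k)+\innerProduct{\nabla f(y_k)}{\ballmin-y_k}+\frac{\alpha}{2}\norm{\ballmin-y_k}_{\mh}^{2}$ — which holds because the segment from $y_k$ to $\ballmin$ stays in $\ballset$, hence within radius $r$ of $\bx$, so $(r,c)$-Hessian stability gives $\nabla^2 f\succeq c^{-1}\mh$ along it — finishes the proof. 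I do not expect a genuine obstacle here; the only step needing care is the constant chase, i.e.\ keeping $G\le L(3r+D)$ sharp so that $2L\Delta r+G\Delta$ is at most the stated $L\Delta(5r+D)$, and making sure the iterate-containment claim is in place before invoking the quadratic bounds.
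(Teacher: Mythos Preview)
Your proposal is correct and follows essentially the same route as the paper: define the quadratic subproblem objective $h$, note it is $L$-smooth in $\normm{\cdot}$ with $\nabla h\in\textup{Im}(\mm)$, bound $G=\normmd{\nabla h(z_{k+1})}\le L(3r+D)$ via $\normm{y_k-x^*}\le r+D$ and $\normmd{\mh w}\le L\normm{w}$, and invoke Lemma~\ref{lem:approxfoo} to obtain the $L\Delta(5r+D)$ slack before re-running the three-point argument of Lemma~\ref{lem:mirrdescexact}. The paper's proof is terser---it simply cites Lemmas~\ref{lem:mirrdescexact} and~\ref{lem:approxfoo} and computes $G$---whereas you additionally spell out the inductive containment $x_k,z_k,y_k\in\ballset$ and the strong-convexity step via Hessian stability; these are welcome clarifications but not a different approach.
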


\begin{proof}
	First, the Hessian of the objective being minimized in \eqref{eq:idealstep} is $\mh$, so the objective is $L$-smooth w.r.t $\|\cdot\|_{\mm}$ over a region $\ballset$ of bounded diameter $2r$. From Lemma \ref{lem:mirrdescexact} and
	\ref{lem:approxfoo} we have that the first-order optimality condition
	is correct up to an additive $2L\Delta r+G\Delta$,
	where $G$ is a bound on the gradient norm of the objective at 
	$z_{k+1}$.
	The conclusion follows from $\mh \mmd \mh \preceq L^2\mm$ by smoothness, so that
	\begin{align*}
	G = \normmd{\nabla 
	f(y_{k})+(1-\alpha)\mh(z_{k+1}-z_{k})+\alpha\mh(z_{k+1}-y_{k})}
	& \le\normmd{\nabla f(y_{k})}+2(1-\alpha)Lr+2\alpha Lr\\
	& \le L(D+r)+2Lr.
	\end{align*}
	In the final inequality, we used 
	$\normm{y_{k}-x^{*}}\le\normm{\bx-x^{*}}+\normm{y_{k}-\bx}\le 
	D+r$,
	and Lipschitzness of $\nabla f$ w.r.t $\|\cdot\|_{\mm}$.
\end{proof}
With this in hand, we can quantify how much progress is made in each iteration of the algorithm.
\begin{lem}
	\label{lem:potfunc}Consider a single iteration of Algorithm 
	\ref{alg:nesterov_h}
	from a pair of points $x_{k},z_{k}$. Also, assume that 
	$\normm{\bx-x^{*}}\le D$,
	where $x^{*}$ is the global optimizer of $f$. Then,
	\[
	f(x_{k+1})-f(\ballmin)+\frac{1}{2c}\norm{z_{k+1}-\ballmin}_{\mh}^{2}\le\left(1-\frac{1}{c}\right)\left(f(x_{k})-f(\ballmin)+\frac{1}{2c}\norm{z_{k}-\ballmin}_{\mh}^{2}\right)+Lc^{-1}\Delta(5r+D).
	\]
\end{lem}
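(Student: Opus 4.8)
The plan is to run the textbook estimate-sequence (``linear coupling'') analysis of Nesterov's accelerated method, but carried out in the seminorm $\norm{\cdot}_{\mh}$ and tracking the additive error $L\Delta(5r+D)$ that Corollary~\ref{corr:mirrdescinexact} already supplies. First I would observe that every iterate of Algorithm~\ref{alg:nesterov_h} stays in $\ballset$: we have $x_0 = z_0 = \bx \in \ballset$, each $z_{k+1}$ is an output of $\textsc{SolveTR}(\bx, r, \cdot, \cdot, \cdot, \cdot)$ and hence lies in $\ballset$ by Proposition~\ref{prop:ghsolve}, and $y_k$ and $x_{k+1}$ are convex combinations of points of the convex set $\ballset$. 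Consequently $(r,c)$-Hessian stability yields $\hess f(y) \preceq c\mh$ along the whole segment from $y_k$ to $x_{k+1}$, so Taylor's theorem gives the smoothness upper bound
\[ f(x_{k+1}) \;\le\; f(y_k) + \innerProduct{\grad f(y_k)}{x_{k+1}-y_k} + \frac{c}{2}\norm{x_{k+1}-y_k}_{\mh}^2. \]

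Next I would multiply the conclusion of Corollary~\ref{corr:mirrdescinexact} through by $\alpha = c^{-1}\in(0,1)$ and aim to show that the left-hand side of the resulting inequality is at least $f(x_{k+1}) - (1-\alpha)f(x_k)$; once this is done, rearranging and subtracting $f(\ballmin)$ from both sides (so that the $(1-\alpha)f(\ballmin)$ terms cancel) and substituting $\alpha = c^{-1}$ produces exactly the claimed recursion, since $\tfrac{\alpha}{2} = \tfrac{1}{2c}$ and $\alpha L\Delta(5r+D) = Lc^{-1}\Delta(5r+D)$. The core computation is the following. From $x_k = (1+\alpha)y_k - \alpha z_k$ and $x_{k+1} = (1-\alpha)x_k + \alpha z_{k+1}$ one gets the identity $x_{k+1}-y_k = \alpha\bigl(z_{k+1} - \alpha y_k - (1-\alpha)z_k\bigr)$. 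Writing $z_{k+1} - \alpha y_k - (1-\alpha)z_k = \alpha(z_{k+1}-y_k) + (1-\alpha)(z_{k+1}-z_k)$, convexity of $\norm{\cdot}_{\mh}^2$ together with $c\alpha = 1$ give
\[ \frac{c}{2}\norm{x_{k+1}-y_k}_{\mh}^2 \;\le\; \frac{\alpha^2}{2}\norm{z_{k+1}-y_k}_{\mh}^2 + \frac{\alpha(1-\alpha)}{2}\norm{z_{k+1}-z_k}_{\mh}^2, \]
and the right-hand side here is precisely the pair of quadratic terms that appear (scaled by $\alpha$) on the left-hand side of the scaled corollary. Feeding this into the smoothness bound, the left-hand side of the scaled corollary is at least $\alpha f(y_k) + \alpha\innerProduct{\grad f(y_k)}{z_{k+1}-y_k} + f(x_{k+1}) - f(y_k) - \innerProduct{\grad f(y_k)}{x_{k+1}-y_k}$.

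Finally I would clean up the leftover linear term: a direct calculation from the definition of $x_{k+1}$ shows $\alpha(z_{k+1}-y_k) - (x_{k+1}-y_k) = (1-\alpha)(y_k - x_k)$, so the surviving first-order term equals $(1-\alpha)\innerProduct{\grad f(y_k)}{y_k - x_k}$, which is at least $(1-\alpha)(f(y_k)-f(x_k))$ by convexity of $f$. Substituting, the $f(y_k)$ contributions cancel and the left-hand side of the scaled corollary is bounded below by $f(x_{k+1}) - (1-\alpha)f(x_k)$, as desired. I expect the only non-routine step to be the quadratic bookkeeping of the middle paragraph --- checking that the smoothness overshoot $\tfrac{c}{2}\norm{x_{k+1}-y_k}_{\mh}^2$ is exactly dominated by the combination $\tfrac{\alpha}{2}\norm{y_k-z_{k+1}}_{\mh}^2 + \tfrac{1-\alpha}{2}\norm{z_k-z_{k+1}}_{\mh}^2$ coming from Corollary~\ref{corr:mirrdescinexact} after scaling --- which hinges on the update identities and on the choice $\alpha = c^{-1}$; everything else is standard convexity.
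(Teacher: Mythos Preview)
Your proof is correct and follows essentially the same route as the paper: both use Hessian stability to get the $c$-smoothness bound $f(x_{k+1}) \le f(y_k) + \innerProduct{\grad f(y_k)}{x_{k+1}-y_k} + \tfrac{c}{2}\norm{x_{k+1}-y_k}_{\mh}^2$, exploit the identity $x_{k+1}-y_k = \alpha(z_{k+1} - \alpha y_k - (1-\alpha)z_k)$ together with convexity of the squared norm and $c\alpha=1$, apply convexity to handle the $(1-\alpha)\innerProduct{\grad f(y_k)}{y_k-x_k}$ term, and then substitute Corollary~\ref{corr:mirrdescinexact}. You are slightly more explicit than the paper in verifying that all iterates remain in $\ballset$ (the paper just writes ``By stability''), but the algebraic skeleton is identical.
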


\begin{proof}
	By stability and 
	$x_{k+1}-y_{k}=\alpha(z_{k+1}-y_{k})+(1-\alpha)(x_{k}-y_{k})$
	from the definition of the algorithm,
	\begin{align*}
	f(x_{k+1}) & \le f(y_{k})+\innerProduct{\nabla 
	f(y_{k})}{x_{k+1}-y_{k}}+\frac{c}{2}\norm{x_{k+1}-y_{k}}_{\mh}^{2}\\
	& =(1-\alpha)\left(f(y_{k})+\innerProduct{\nabla 
	f(y_{k})}{x_{k}-y_{k}}\right)+\alpha\left(f(y_{k})+\innerProduct{\nabla 
	f(y_{k})}{z_{k+1}-y_{k}}\right)\\
	& 
	+\frac{c}{2}\norm{\alpha(z_{k+1}-y_{k})+(1-\alpha)(x_{k}-y_{k})}_{\mh}^{2}\\
	& \le(1-\alpha)f(x_{k})+\alpha\left(f(y_{k})+\innerProduct{\nabla 
	f(y_{k})}{z_{k+1}-y_{k}}+\frac{1}{2}\norm{z_{k+1}-(1-\alpha)z_{k}-\alpha
	 y_{k}}_{\mh}^{2}\right)\\
	& \le(1-\alpha)f(x_{k})+\alpha\left(f(y_{k})+\innerProduct{\nabla 
	f(y_{k})}{z_{k+1}-y_{k}}+\frac{\alpha}{2}\norm{y_{k}-z_{k+1}}_{\mh}^{2}+\frac{1-\alpha}{2}\norm{z_{k}-z_{k+1}}_{\mh}^{2}\right).
	\end{align*}
	The second inequality used convexity and $(1-\alpha)(\alpha 
	z_{k}+x_{k})=(1-\alpha^{2})y_{k}$,
	which implies
	\[
	\alpha(z_{k+1}-y_{k})+(1-\alpha)(x_{k}-y_{k})=\alpha(z_{k+1}-(1-\alpha)z_{k}-\alpha
	 y_{k}),
	\]
	and the third inequality used convexity of the norm squared. Substituting
	the earlier bound from Corollary \ref{corr:mirrdescinexact} yields
	the conclusion, recalling $\alpha=c^{-1}$.
\end{proof}
Now we are ready to prove the main result for the implementation of the ball optimization oracle, restated below.
\ballopt*
\begin{proof}
	First, for each
	iteration $k$, define the potential function
	\[
	\Phi_{k}\defeq 
	f(x_{k})-f(\ballmin)+\frac{1}{2c}\norm{z_{k}-\ballmin}_{\mh}^{2}.
	\]
	By applying Lemma \ref{lem:potfunc}, and defining $E\defeq 
	L\Delta(5r+D) = \frac{\mu\delta^2}{4c}$ by the definition of $\Delta$ in Algorithm~\ref{alg:nesterov_h},
	we have
	\[
	\Phi_{k+1}\le\left(1-\frac{1}{c}\right)\Phi_{k}+\frac{E}{c}.
	\]
	Telescoping this guarantee and bounding the resulting geometric series in $E$ yields
	\begin{equation}
	\Phi_{k}\le\left(1-\frac{1}{c}\right)^{k}\Phi_{0}+E.\label{eq:noisypot}
	\end{equation}
	Now, recalling $x_{0}=z_{0}=\bx$, we can bound the
	initial potential by
	\[
	\Phi_{0}\le\innerProduct{\nabla 
	f(\ballmin)}{\bx-\ballmin}+\frac{c}{2}\norm{\bx-\ballmin}_{\mh}^{2}+\frac{1}{2c}\norm{\bx-\ballmin}_{\mh}^{2}\le
	 LDr+Lcr^{2}.
	\]
	where we used $\mh\preceq L\mm$. Next, note that whenever we have $\Phi_{k}\le\mu\delta^{2}/2c$, we
	have
	\[
	\frac{1}{2c}\norm{z_{k}-\ballmin}_{\mh}^{2}\le\frac{\mu\delta^{2}}{2c}\Rightarrow\norm{z_{k}-\ballmin}_{\mm}\le\delta,
	\]
	where we used $\mh\succeq\mu\mm$. Thus, as $E=\mu\delta^{2}/4c$, running for
	\begin{equation}
	\label{eqn:num_iter}
	k=O\left(c\log\left(\frac{Lc(Dr+cr^{2})}{\mu\delta^{2}}\right)\right) = 
	O\left(c\log\left(\frac{\kappa (D+r)c}{\delta}\right)\right)
	\end{equation}
	iterations suffices to guarantee $\Phi_{k}\le\mu\delta^{2}/2c$ via
	(\ref{eq:noisypot}), and therefore implements a $(\delta,r)$-ball optimization oracle at $\bar{x}$. It remains to bound the complexity of each iteration. For this, we apply Proposition~\ref{prop:ghsolve} with the parameter
	$\Delta=\mu\delta^{2}/(4Lc(5r+D))$, and compute
	\[
	\normmd{\mh(\bx-(1-\alpha)z_{k}-\alpha y_{k})+\nabla f(y_{k})}\le 
	L\normm{\bx-(1-\alpha)z_{k}-\alpha y_{k}}+\normmd{\nabla 
	f(y_{k})}\le 2Lr+LD.
	\]
	Altogether, the number of linear system solves in the step is then bounded by 
	\[
	O\left(\log\left(\frac{L^{2}(D+r)^{2}}{\mu^{2}}\cdot\frac{Lc(D+r)}{\mu\delta^{2}r}\right)\right)
	 = O\left(\log\left(\frac{\kappa (D+r)c}{\delta}\right)\right),
	\]
	where the first term is due to the squared norm and $\mu^{-2}$, and
	the second is due to $(r\Delta)^{-1}$, in the bound of Proposition~\ref{prop:ghsolve}. The final bound follows from the assumption $\delta < r$. Combining with \eqref{eqn:num_iter} yields the claim.
\end{proof}
\section{Proof of Lemma~\ref{lem:quasi-to-stable}}
\label{app:qsc}
Here, we prove Lemma~\ref{lem:quasi-to-stable}, which shows quasi-self-concordance implies Hessian stability.

\quasisc*

\begin{proof}
	Let $x,y\in\R^{d}$ be arbitrary and let $x_{t}\defeq x+t(y-x)$ for
	all $t\in[0,1]$. Then for all $u\in\R^{d}$,
	\[
	\frac{d}{dt}\left(\norm u_{\nabla^{2}f(x_{t})}^{2}\right)=\frac{d}{dt}\left(u^{\top}\hess f(x_{t})u\right)=\nabla^{3}f(x_{t})[u,u,y-x].
	\]
	The result follows from
	\begin{align*}
	\left|\log\left(\norm u_{\nabla^{2}f(y)}^{2}\right)-\log\left(\norm u_{\nabla^{2}f(x)}^{2}\right)\right| & =\left|\int_{0}^{1}\frac{\nabla^{3}f(x_{t})[u,u,y-x]}{\norm u_{\hess f(x_{t})}^{2}}dt\right|\leq M\norm{y-x}.
	\end{align*}
\end{proof}

\section{Proofs for applications}
\label{app:application}

\citethis*
\begin{proof}
	Let the minimizer of $\tilde{f}(x)$ be $\tilde{x}$: observe by 
	Lemma~\ref{lem:dist_falls} that $\norm{x_0 - \tilde{x}}_\mm \leq 
	\norm{x_0 - x^*}_\mm \leq R$. Note that $\tilde{f}(x)$ is $L + 
	\eps/55R^2$-smooth and $\eps/55R^2$-strongly convex in 
	$\normm{\cdot}$, and since the iterates of 
	Algorithm~\ref{alg:msaccel_framework} never are more than $D = 
	3\sqrt{2}R$ away from $\tilde{x}$ (Lemma~\ref{lem:domain_bound}), by 
	the triangle inequality and $(1 + 3\sqrt{2})^2 \le 55/2$, $\tilde{f}$ 
	approximates $f$ to an additive error $\eps/2$ for all iterates. Next, 
	letting $r = 1/M$, it follows from Lemma~\ref{lem:quasi-to-stable} that 
	$g$ is $(r, e)$-Hessian stable in $\ell_2$, so that $f$ is $(r, e)$-Hessian 
	stable in $\normm{\cdot}$ (see Lemma~\ref{lem:stillstable}). It follows 
	from the definition of Hessian stability that $\tilde{f}$ is also $(r, 
	e)$-Hessian stable in $\normm{\cdot}$ (see  
	Lemma~\ref{lem:morestable}). Finally, the conclusion follows from 
	combining the guarantees of Theorem~\ref{thm:mainclaim-accel} and 
	Theorem~\ref{thm:ballopt}, where it suffices to minimize $\tilde{f}$ to 
	$\eps/2$ additive error.
\end{proof}

\begin{lem}
\label{lem:stillstable}
Let $g: \R^n \rightarrow \R$ be $M$-QSC in $\ell_2$. Then, $f(x) = g(\ma x)$ is $M$-QSC in $\norm{\cdot}_{\ma^\top\ma}$, for $\ma \in \R^{n\times d}$, $f: \R^d \rightarrow \R$. 
\end{lem}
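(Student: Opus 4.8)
The plan is a direct computation via the chain rule, reducing the quasi-self-concordance of $f$ to that of $g$. First I would record that since $g$ is thrice-differentiable and $x\mapsto \ma x$ is linear, $f(x)=g(\ma x)$ is thrice-differentiable with
\[
\nabla f(x) = \ma^\top\nabla g(\ma x),\qquad
\nabla^2 f(x) = \ma^\top\nabla^2 g(\ma x)\ma,\qquad
\nabla^3 f(x)[u,u,h] = \nabla^3 g(\ma x)[\ma u,\ma u,\ma h]
\]
for all $u,h\in\R^d$, where the last identity is the third-order chain rule for a composition with a linear map.

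Next I would note the two norm identities that make the substitution work: for any $h\in\R^d$, $\norm{h}_{\ma^\top\ma} = \sqrt{h^\top\ma^\top\ma h} = \norm{\ma h}_2$, and for any $u\in\R^d$, $\norm{u}_{\nabla^2 f(x)}^2 = u^\top\ma^\top\nabla^2 g(\ma x)\ma u = \norm{\ma u}_{\nabla^2 g(\ma x)}^2$. Then I would apply the hypothesis that $g$ is $M$-QSC in $\ell_2$ at the point $\ma x\in\R^n$ with the directions $\ma u$ (twice) and $\ma h$, giving $\left|\nabla^3 g(\ma x)[\ma u,\ma u,\ma h]\right| \le M\norm{\ma h}_2\,\norm{\ma u}_{\nabla^2 g(\ma x)}^2$. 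Combining this with the displayed chain-rule identity and the two norm identities yields $\left|\nabla^3 f(x)[u,u,h]\right| \le M\,\norm{h}_{\ma^\top\ma}\,\norm{u}_{\nabla^2 f(x)}^2$, and since $x,u,h$ were arbitrary this is exactly the statement that $f$ is $M$-QSC in $\norm{\cdot}_{\ma^\top\ma}$.

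I do not expect a genuine obstacle here; the only points requiring a moment of care are the index bookkeeping in the third-order chain rule and the fact that $\ma^\top\ma$ may be rank-deficient, so $\norm{\cdot}_{\ma^\top\ma}$ is only a seminorm — but the QSC inequality is an inequality between nonnegative quantities that remains meaningful for seminorms, so nothing breaks. (This lemma is the companion to Lemma~\ref{lem:quasi-to-stable} used to transfer Hessian stability from $g$ to $f$ in Corollary~\ref{corr:citethis}.)
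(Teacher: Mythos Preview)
Your proposal is correct and follows essentially the same approach as the paper's proof: apply the third-order chain rule to write $\nabla^3 f(x)[u,u,h]=\nabla^3 g(\ma x)[\ma u,\ma u,\ma h]$, invoke the QSC bound for $g$ at $\ma x$, and translate back using $\norm{\ma h}_2=\norm{h}_{\ma^\top\ma}$ and $\nabla^2 f(x)=\ma^\top\nabla^2 g(\ma x)\ma$. Your additional remarks on the seminorm case and the chain-rule bookkeeping are harmless elaborations.
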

\begin{proof}
Recall the condition on $g$ implies for all $u, h, x \in \R^d$,
\[
\left|\nabla^{3}g(\ma x)[\ma u,\ma u,\ma h]\right|\le M\|\ma h\|_2 \|\ma u\|_{\nabla^{2}g(y)}^{2}.
\]
Using this, and recalling $\norm{\ma h}_2 = \norm{h}_{\ma^\top\ma}$, $\nabla^2 f(x) = \ma^\top \nabla^2 g(\ma x) \ma$, the result follows:
\[\left|\nabla^{3}f(x)[u,u,h]\right|=\left|\nabla^{3}g(\ma x)[\ma u,\ma u,\ma h]\right| \le M\norm{ h}_{\ma^\top\ma}\norm{u}_{\nabla^2 f(x)}^{2}.\]
\end{proof}
\begin{lem}
\label{lem:morestable}
Suppose $f$ is $(r, c)$-stable in $\norm{\cdot}$. Then for any matrix $\mm$ and $\lam \ge 0$, $\tilde{f}$ defined by $\tilde{f}(x) = f(x) + \frac{\lam}{2}x^\top\mm x$ is also $(r, c)$-stable in $\norm{\cdot}$.
\end{lem}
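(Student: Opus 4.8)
The plan is to verify the Hessian stability inequality for $\tilde f$ directly from that of $f$, using that adding a positive semidefinite quadratic only shifts the Hessian by a fixed positive semidefinite matrix. First I would note that $\nabla^2 \tilde f(x) = \nabla^2 f(x) + \lambda\mm$ for every $x$, since the Hessian of $x \mapsto \frac{\lambda}{2}x^\top \mm x$ is the constant matrix $\lambda\mm$; in the applications, where the regularizer is written $\frac{\lambda}{2}\norm{x - y}_\mm^2$, expanding changes only lower-order terms, so its Hessian is still $\lambda\mm$. We use $\mm \succeq 0$ and $\lambda \ge 0$, so that $\lambda\mm \succeq 0$, and we may assume $c \ge 1$ (substituting $x = y$ into the stability condition for $f$ forces $c^{-1}\nabla^2 f(y) \preceq \nabla^2 f(y) \preceq c\,\nabla^2 f(y)$, which fails for $c < 1$ unless the Hessian vanishes identically, a trivial case).

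Next I would fix $x, y$ with $\norm{x - y} \le r$ and invoke the $(r,c)$-stability of $f$, i.e.\ $c^{-1}\nabla^2 f(y) \preceq \nabla^2 f(x) \preceq c\,\nabla^2 f(y)$. Adding $\lambda\mm$ throughout and using $c \ge 1$ with $\lambda\mm \succeq 0$ (so $\lambda\mm \preceq c\lambda\mm$ and $c^{-1}\lambda\mm \preceq \lambda\mm$) yields
\[
\nabla^2 \tilde f(x) = \nabla^2 f(x) + \lambda\mm \preceq c\,\nabla^2 f(y) + \lambda\mm \preceq c\left(\nabla^2 f(y) + \lambda\mm\right) = c\,\nabla^2 \tilde f(y),
\]
and symmetrically $\nabla^2 \tilde f(x) \succeq c^{-1}\nabla^2 f(y) + \lambda\mm \succeq c^{-1}\nabla^2 \tilde f(y)$. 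Together these are exactly the definition of $(r,c)$-Hessian stability for $\tilde f$ in $\norm{\cdot}$, completing the argument.

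There is no genuine obstacle here: the whole proof is a short Loewner-order manipulation, and the only points worth stating explicitly are that $\lambda\mm \succeq 0$, so it can be absorbed at both ends of the sandwich, and that $c \ge 1$, so the two absorptions go in the correct directions.
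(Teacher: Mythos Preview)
Your proposal is correct and is essentially the same argument as the paper's: compute $\nabla^2\tilde f(x)=\nabla^2 f(x)+\lambda\mm$, then add the chain $c^{-1}\lambda\mm\preceq\lambda\mm\preceq c\lambda\mm$ to the stability inequalities for $f$. You are simply more explicit than the paper about why $\lambda\mm\succeq 0$ and $c\ge 1$ are needed for those absorptions to go in the right directions.
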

\begin{proof}
It suffices to show that for $x, y \in \R^d$ with $\norm{x - y} \le r$,
\[c^{-1}\nabla^2 \tilde{f}(y) \preceq \nabla^2 \tilde{f}(x) \preceq c\nabla^2 \tilde{f}(y).\]
This immediately follows from $\nabla^2\tilde{f}(x) = \nabla^2 f(x) + \lam \mm$, and combining
\begin{align*}
c^{-1}\nabla^2f(y) \preceq \nabla^2f(x) \preceq c\nabla^2f(y),\; c^{-1}\lam\mm \preceq \lam\mm \preceq c\lam\mm.
\end{align*}
\end{proof}

\begin{lem}
Let $f$ be a convex function with minimizer $x^*$, and let $\mm$ be a positive semidefinite matrix. If $f_t(x) = f(x) + \frac{t}{2} \norm{x - y}_\mm^2$ is minimized at $x_t$, then for all $u \geq 0$, $\normm{x_u - y} \leq \normm{x^* - y}$. 
\label{lem:dist_falls}
\end{lem}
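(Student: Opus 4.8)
The plan is to argue purely by comparing function values, avoiding any appeal to first-order optimality conditions (which would require differentiability of $f$). First I would use that $x_u$ minimizes $f_u$, so in particular $f_u(x_u)\le f_u(x^*)$; expanding the definition of $f_u$ this reads
\[
f(x_u) + \frac{u}{2}\normm{x_u - y}^2 \;\le\; f(x^*) + \frac{u}{2}\normm{x^* - y}^2.
\]
Next I would invoke that $x^*$ is a global minimizer of $f$, so $f(x^*) - f(x_u) \le 0$. Rearranging the previous display gives
\[
\frac{u}{2}\normm{x_u - y}^2 \;\le\; \frac{u}{2}\normm{x^* - y}^2 + \big(f(x^*) - f(x_u)\big) \;\le\; \frac{u}{2}\normm{x^* - y}^2,
\]
and for $u>0$ dividing by $u/2$ and taking square roots yields the claim. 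For $u=0$ we have $x_0 = x^*$ and the inequality holds with equality.

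There is essentially no obstacle here — the only point worth flagging is the well-definedness of the minimizers $x_u$ and $x^*$, but these are assumed to exist in the statement, and in all our applications $f$ is strictly convex so they are moreover unique (making the $u=0$ case unambiguous). Note also that the argument never uses positive-definiteness of $\mm$, only that $\normm{\cdot}$ is a genuine seminorm, so it applies verbatim in the Euclidean-seminorm setting used throughout the paper. An alternative route via monotonicity of the subdifferential $\partial f$ combined with Cauchy--Schwarz in the bilinear form $\innerProduct{\cdot}{\mm\,\cdot}$ also works, but the value-comparison argument above is shorter and requires no regularity of $f$.
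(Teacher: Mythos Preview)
Your proof is correct and takes a genuinely different---and considerably simpler---route than the paper. The paper argues by calculus: it uses the first-order optimality condition $\nabla f(x_t) = -t\,\mm(x_t - y)$, differentiates in $t$ to obtain $\frac{dx_t}{dt} = -(\nabla^2 f(x_t) + t\mm)^\dagger \mm(x_t - y)$, and then computes
\[
\normm{x_u - y}^2 - \normm{x^* - y}^2 = -2\int_0^u \norm{x_t - y}_{\mm(\nabla^2 f(x_t) + t\mm)^\dagger \mm}^2\,dt \le 0.
\]
This requires $f$ to be twice differentiable and implicitly assumes smooth dependence of $x_t$ on $t$. By contrast, your two-line value comparison uses only the defining minimality of $x_u$ and $x^*$ and no regularity whatsoever, so it is both shorter and strictly more general. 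The one thing the paper's argument buys is the stronger conclusion that $t\mapsto\normm{x_t - y}$ is monotone nonincreasing on all of $[0,\infty)$, not merely bounded by its value at $t=0$; but the lemma as stated does not ask for this, and it is not used elsewhere in the paper.
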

\begin{proof}
By the KKT conditions for $f_t$ we observe
\[
\grad f(x_t) = - t \mm (x_t - y). 
\]
Taking derivatives of this with respect to $t$ we obtain
\[
\grad^2 f(x_t) \frac{d x_t}{dt} = - \mm (x_t - y) - t \mm \frac{d x_t}{dt}
\]
or
\[
\frac{d x_t}{dt} = - \left(\grad^2 f(x_t) + t \mm \right)^\dagger \mm (x_t - y).
\]
Now we have
\begin{align*}
\norm{x_u - y}_{\mm}^2 - \norm{x^* - y}_{\mm}^2 &= 2\int_{0}^u \left(\frac{d}{dt} x_t\right)^\top \mm (x_t - y) dt \\
&= - 2\int_{0}^{u} \norm{x_t - y}_{\mm (\grad^2 f(x_t) + t \mm)^\dagger \mm}^2 dt \leq 0
\end{align*}
as desired. 
\end{proof}

\begin{lem}[Approximation of $\lse_t$]
\label{lem:lseapprox}
For all $y \in \R^n$, 
\[|\lse_t(y) - \max_{i \in [n]} y_i| < t\log n.\]
\end{lem}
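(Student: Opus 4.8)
The plan is to sandwich $\sum_{i\in[n]}\exp(y_i/t)$ between $\exp(M/t)$ and $n\exp(M/t)$, where $M\defeq\max_{i\in[n]}y_i$, and then apply the monotone map $s\mapsto t\log s$. First I would fix $y\in\R^n$ and let $i^*$ attain the maximum, so $y_{i^*}=M$. For the lower bound, since every summand $\exp(y_i/t)$ is positive and one of them equals $\exp(M/t)$, we get $\sum_{i\in[n]}\exp(y_i/t)\ge\exp(M/t)$. For the upper bound, each $y_i\le M$ gives $\exp(y_i/t)\le\exp(M/t)$, hence $\sum_{i\in[n]}\exp(y_i/t)\le n\exp(M/t)$.

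Next I would take $\log$ of both chains of inequalities (valid since all quantities are positive and $\log$ is increasing), obtaining
\[
\frac{M}{t}\le \lse(y/t)=\log\Bigl(\sum_{i\in[n]}\exp(y_i/t)\Bigr)\le \log n+\frac{M}{t},
\]
and then multiply through by $t>0$ and use the definition $\lse_t(y)=t\,\lse(y/t)$ to conclude
\[
M\le \lse_t(y)\le M+t\log n.
\]
Therefore $0\le \lse_t(y)-\max_{i\in[n]}y_i\le t\log n$, which gives $\bigl|\lse_t(y)-\max_{i\in[n]}y_i\bigr|\le t\log n$ as claimed.

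There is essentially no obstacle here; the only subtlety is that the stated inequality is strict, whereas the argument above yields $\le$ with equality exactly when all coordinates of $y$ are equal (and $n\ge 2$). One may either note that in every application $n\ge 2$ and the relevant vectors are not constant, so the inequality is strict, or simply use the non-strict bound, which is all that is needed downstream (the softmax approximation in \Cref{sec:applications} only requires an additive error of at most $t\log n$).
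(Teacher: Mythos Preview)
Your argument is correct and complete. The paper proves the same bound by a different, more structural route: it invokes the Fenchel dual representation $\lse_t(y)=\max_{z\in\Delta^n}\{z^\top y - t h(z)\}$ with $h(z)=\sum_i z_i\log z_i$ the negative entropy, notes that $\max_i y_i=\max_{z\in\Delta^n} z^\top y$, and concludes from the fact that $h$ has range $[-\log n,0]$ on the simplex. Your direct sandwich of the sum $\sum_i\exp(y_i/t)$ between $\exp(M/t)$ and $n\exp(M/t)$ is more elementary and self-contained, requiring no convex-analytic machinery; the paper's approach, by contrast, explains conceptually why the additive error is exactly $t\log n$ (it is the maximal entropy scaled by $t$). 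Both arguments yield only the non-strict inequality, and your observation about the equality case is accurate---the strict inequality as stated fails when all coordinates of $y$ coincide, but only the non-strict bound is used downstream.
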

\begin{proof}
This follows from the facts that for $z \in \Delta^n$ the probability simplex, the entropy function $h(z) \defeq \sum_{i \in [n]} z_i\log z_i$ has range $[-\log n, 0]$, $\max_{i \in [n]} y_i = \max_{z \in \Delta^n} z^\top y$, and by computation
\[\lse_t(y) = \max_{z \in \Delta^n} z^\top y - th(z).\]
\end{proof}
\subsection{Softmax calculus}
\begin{proof}[Proof of Lemma~\ref{lem:softmax_stable}]
We will prove $1$-smoothness and $2$-QSC for $\lse$, which implies the claims by chain rule. Let $S\defeq\sum_{i\in[n]}\exp(x_{i})$, and let $g\in\R^{n}$ with
$g_{i}=\exp(x_{i})/S$, $\mg\defeq\mdiag(g)$. Direct calculation
reveals that for all $i,j,k\in[n]$
\begin{align*}
\frac{\partial}{\partial x_{i}}\lse(x) & =g_{i},\\
\frac{\partial^{2}}{\partial x_{i}\partial x_{j}}\lse(x) & =\indicVec{i=j}g_{i}-g_{i}g_{j}\text{, and}\\
\frac{\partial^{3}}{\partial x_{i}\partial x_{j}\partial x_{k}}\lse(x) & =\indicVec{i=j=k}g_{i}-\indicVec{i=j}g_{i}g_{j}-\indicVec{i=k}g_{i}g_{k}-\indicVec{j=k}g_{j}g_{k}+2g_{i}g_{j}g_{k}.
\end{align*}
Therefore, we have that $\hess \lse(x)=\mg-gg^{\top}$. Now, note that
$g_{i}\geq0$ for all $i\in[n]$ and $\norm g_{1}=1$. By Cauchy-Schwarz,
\[
\left(g^{\top}u\right)^{2}=\left(\sum_{i\in[n]}g_{i}u_{i}\right)^{2}\leq\left(\sum_{i\in[n]}g_{i}\right)\left(\sum_{i\in[n]}g_{i}u_{i}^{2}\right)=u^{\top}\mg u\leq\norm g_{1}\norm u_{\infty}^{2}=\norm u_{\infty}^{2}\,.
\]
This implies that $0\preceq\nabla^{2}\lse(x)\preceq\mg$, and the first part
follows. Further, letting $\mh\defeq\mdiag(h)$ and $\mU\defeq\mdiag(u)$
we have from direct calculation
\begin{align*}
u^{\top}\mg\mU h-(g^{\top}u)(h^{\top}\mg u) & =u^{\top}\nabla^{2}\lse(x)\mU h,\\
-(g^{\top}u)(h^{\top}\mg u)+(g^{\top}u)^{2}(g^{\top}h) & =-(g^{\top}u)\left(u^{\top}\nabla^{2}\lse(x)h\right)\\
-(u^{\top}\mg u)(g^{\top}h)+(g^{\top}u)^{2}(g^{\top}h) & =-\left(u^{\top}\nabla^{2}\lse(x)u\right)(g^{\top}h).
\end{align*}
Combining these equations and the previous derivation of $\nabla^{3}f(x)$,
\begin{align}
\left|\nabla^{3}\lse(x)[u,u,h]\right| & =\left|u^{\top}\mg\mh u-2(g^{\top}u)(h^{\top}\mg u)-(u^{\top}\mg u)(g^{\top}h)+2(g^{\top}u)^{2}(g^{\top}h)\right|\nonumber \\
 & =\left|u^{\top}\hess \lse(x)\mU h-(g^{\top}u)\left(u^{\top}\nabla^{2}\lse(x)h\right)-\left(u^{\top}\hess \lse(x)u\right)(g^{\top}h)\right|\label{eq:bound0smax}\\
 & \leq\left|u^{\top}\hess \lse(x)\left(\mU h-(g^{\top}u)h\right)\right|+\left|g^{\top}h\right|\norm u_{\hess \lse(x)}^{2}.\nonumber 
\end{align}
Now, since $\hess \lse(x)\succeq 0$ we have 
\begin{equation}
\left|u^{\top}\hess \lse(x)\left(\mU h-(g^{\top}u)h\right)\right|\leq\norm u_{\hess \lse(x)}\norm{\left(\mU-(g^{\top}u)\mI\right)h}_{\hess \lse(x)}.\label{eq:bound1smax}
\end{equation}
Further, recall $\hess \lse(x)\preceq\mg$ and consequently 
\begin{align}
\norm{\left(\mU-(g^{\top}u)\mI\right)h}_{\hess \lse(x)}^{2} & \leq\norm{\left(\mU-(g^{\top}u)\mI\right)h}_{\mg}^{2}=\sum_{i\in[n]}h_{i}^{2}g_{i}\left(u_{i}-g^{\top}u\right)^{2}\nonumber \\
 & \leq\norm h_{\infty}^{2}\sum_{i\in[n]}g_{i}\left(u_{i}^{2}-2u_{i}(g^{\top}u)+(g^{\top}u)^{2}\right)\label{eq:bound2smax}\\
 & =\norm h_{\infty}^{2}\left(\left(\sum_{i\in[n]}g_{i}u_{i}^{2}\right)-2(g^{\top}u)^{2}+\left(g^{\top}u\right)^{2}\norm g_{1}\right)\\
 & =\norm h_{\infty}^{2}\norm u_{\hess \lse(x)}^{2}.\nonumber 
\end{align}
Combining (\ref{eq:bound0smax}), (\ref{eq:bound1smax}), (\ref{eq:bound2smax}),
and using $|g^{\top}h|\leq\norm g_{1}\norm h_{\infty}\leq\norm h_{\infty}$ and
$\norm h_{\infty}\le\norm h_{2}$, the result follows.
\end{proof}

\subsection{Proofs for $\ell_p$ regression}
\label{ssec:lpproofs}
\lpqsc*
\begin{proof}%
Let $\tilde{g}(x) = g(x) + {\mu} \norm{x-y}_2^2$. 
We observe that
\[
|\grad^3 \tilde{g}(x)[h, u, u]| = p(p-1)(p-2)\sum_{i = 1}^n h_i u_i^2 \abs{ 
x_i - b_i}^{p-3}
\]
and 
\[
\grad^2 \tilde{g}(x)[u, u] = \sum_{i = 1}^n u_i^2 \left( p(p-1) \lvert x - 
b\rvert_i^{p-2} + 2\mu \right). 
\]
Now, 
\begin{align*}
|\grad^3 \tilde{g}(x)[h, u, u]| &\leq p(p-1)(p-2) \norm{h}_\infty \sum_{i = 
1}^n u_i^2 \abs{ x_i - b_i}^{p-3} \\
&\leq \norm{h}_2 \sum_{i = 1}^n (p(p-1)(p-2))^{\frac{p}{3p-6}} u_i^2 \left( 
(p(p-1)(p-2))^{\frac{2}{3}} \abs{ x_i - b_i}^{p-2} \right)^{\frac{p-3}{p-2}} \\
&\leq O(p \mu^{-1/(p-2)}) \norm{h}_2  \sum_{i = 1}^n u_i^2 \left( p(p-1) 
\abs{ x_i - b_i}^{p-2} \right)^{\frac{p-3}{p-2}} \mu^{\frac{1}{p-2}} \\
&\leq O(p \mu^{-1/(p-2)}) \norm{h}_2  \sum_{i = 1}^n u_i^2 \left( p(p-1) 
\abs{ x_i - b_i}^{p-2} + 2\mu \right) \\
&\leq O(p \mu^{-1/(p-2)}) \norm{h}_2 \grad^2 \tilde{g}(x)[u, u]
\end{align*}
where we used that $u_i^2 \abs{ x_i - b_i}^{p-3}$ is nonnegative in the 
first line and $\norm{\cdot}_\infty \leq \norm{\cdot}_2$ in the second. In 
the third line we used that $(p(p-1)(p-2))^{\frac{p}{3p-6}} \leq 
p^\frac{3p}{3p-6} = p p^{\frac{6}{3p-6}} = O(p)$ since $p^{\frac{6}{3p-6}}$ 
is at most $9$ if $p \geq 3$. Finally in the fourth line we applied the 
inequality $x^\alpha y^{1-\alpha} \leq \max(x, y) \le x + y$ for 
nonnegative $x, y$, and $\alpha \in [0,1]$. The claim follows.  
\end{proof}

To prove~\Cref{lem:lp_dist} we use the following lemma from 
\cite{AdilKPS19}, with notation modified 
to our setting.\footnote{The function $\gamma_p(\lvert y \rvert, \Delta)$ 
	in their setting is at least $\norm{y}_p^p$.} 

\begin{lem}[{\citet[Lemma 
4.5]{AdilKPS19}}]\label{lem:lp-uniform-convexity}
	Let $p \in (1, \infty)$. Then for any two vectors $y,\Delta\R^n$,
	\[
	\norm{y}_p^p + v^\top \Delta + \frac{p-1}{p \cdot 2^p} 
	\norm{\Delta}_p^p \leq \norm{y + \Delta}_p^p
	\]
	where $v_i = p \lvert y_i \rvert^{p-2} y_i$ is the gradient of 
	$\norm{y}_p^p$.
\end{lem}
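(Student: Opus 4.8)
The plan is to reduce the inequality to one dimension and then to a classical uniform-convexity estimate for $|\cdot|^p$. Since every term is a coordinatewise sum --- $\norm{y}_p^p=\sum_i|y_i|^p$, $v^\top\Delta=\sum_i p|y_i|^{p-2}y_i\Delta_i$, $\norm{\Delta}_p^p=\sum_i|\Delta_i|^p$, and $\norm{y+\Delta}_p^p=\sum_i|y_i+\Delta_i|^p$ --- it suffices to prove, for all real $a,b$, the scalar bound $|a+b|^p\ge|a|^p+p|a|^{p-2}a\,b+\tfrac{p-1}{p2^p}|b|^p$ and add over coordinates. I would establish this for $p\ge2$, which is the only range needed for \Cref{alg:lp_reg} (there $p>3$); for $1<p<2$ a remainder term of the shape $c_p|b|^p$ is impossible, since Taylor-expanding the left side at $a=1$ gives $|1+b|^p-1-pb=\Theta(b^2)=o(|b|^p)$ as $b\to0$.

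For the scalar inequality I would combine two elementary facts. First, convexity of $t\mapsto|t|^p$ evaluated at the point $a+\tfrac b2$ gives $|a+\tfrac b2|^p\ge|a|^p+\tfrac p2|a|^{p-2}a\,b$. Second, the (lower) scalar Clarkson inequality: for $p\ge2$ and all real $u,w$, $|u+w|^p+|u-w|^p\ge2(|u|^p+|w|^p)$. Applying it with $u=a+\tfrac b2$ and $w=\tfrac b2$ (so $u+w=a+b$, $u-w=a$) yields $|a+b|^p+|a|^p\ge2|a+\tfrac b2|^p+2\cdot2^{-p}|b|^p$. Substituting the convexity bound for $|a+\tfrac b2|^p$ and rearranging gives $|a+b|^p\ge|a|^p+p|a|^{p-2}a\,b+2^{1-p}|b|^p$, and since $2^{1-p}=\tfrac{2}{2^p}\ge\tfrac{p-1}{p2^p}$ this is even stronger than what is claimed.

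The one remaining ingredient, and the only place any real work is needed, is the scalar lower Clarkson inequality. Set $A=|u+w|$ and $B=|u-w|$, so that $A^2+B^2=2(u^2+w^2)$. Since $p\ge2$, the map $\sigma\mapsto\sigma^{p/2}$ is convex on $[0,\infty)$, so Jensen gives $\tfrac12(A^p+B^p)=\tfrac12\bigl((A^2)^{p/2}+(B^2)^{p/2}\bigr)\ge\bigl(\tfrac{A^2+B^2}{2}\bigr)^{p/2}=(u^2+w^2)^{p/2}$. Since also $p/2\ge1$, the map $\sigma\mapsto\sigma^{p/2}$ is superadditive on $[0,\infty)$, whence $(u^2+w^2)^{p/2}\ge(u^2)^{p/2}+(w^2)^{p/2}=|u|^p+|w|^p$. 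Chaining the two displays gives $A^p+B^p\ge2(|u|^p+|w|^p)$, as needed.

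The main subtlety --- not a technical obstacle so much as a matter of hypotheses --- is the restriction $p\ge2$: as noted, for $1<p<2$ the displayed inequality with the term $\tfrac{p-1}{p2^p}\norm{\Delta}_p^p$ as written cannot hold, and one must instead use the weaker modulus of convexity appropriate to that range (a term interpolating between $|y_i|^{p-2}\Delta_i^2$ and $|\Delta_i|^p$). Since \Cref{alg:lp_reg} only invokes this lemma for $p>3$, restricting to $p\ge2$ costs nothing, and there the three-step argument above is complete.
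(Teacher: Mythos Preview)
The paper does not prove this lemma; it quotes it from \cite{AdilKPS19} (with a footnote adapting notation) and uses it as a black box. So there is no in-paper proof to compare against.

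Your argument is correct and self-contained for $p\ge 2$: the coordinatewise reduction is valid, the convexity lower bound $|a+\tfrac b2|^p\ge|a|^p+\tfrac p2|a|^{p-2}ab$ is the supporting-line inequality for $t\mapsto|t|^p$, your proof of the scalar Clarkson inequality via Jensen and superadditivity of $\sigma\mapsto\sigma^{p/2}$ is standard and sound, and the final comparison $2^{1-p}\ge\tfrac{p-1}{p2^p}$ is immediate. You in fact obtain the stronger constant $2^{1-p}$ in place of $\tfrac{p-1}{p2^p}$.

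Your observation about the range $1<p<2$ is also correct: the displayed inequality with the pure $\norm{\Delta}_p^p$ remainder fails there (your Taylor-expansion counterexample at $a=1$ is decisive), so the stated hypothesis $p\in(1,\infty)$ is too broad for the inequality exactly as written. The original source states the bound with a more refined remainder function $\gamma_p$ (alluded to in the paper's footnote) that handles both regimes; the simplification to $\tfrac{p-1}{p2^p}\norm{\Delta}_p^p$ is only valid for $p\ge 2$. Since the paper invokes this lemma solely in the $\ell_p$-regression analysis with $p>3$, your restriction to $p\ge 2$ loses nothing for the application, and your proof suffices.
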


\lpdist*
\begin{proof}%
 Substituting $y = \ma x^* - b$, $\Delta = \ma(x - x^*)$ in 
 \Cref{lem:lp-uniform-convexity}, and simplifying gives
\[
\norm{\ma x^* - b}_p^p + \grad f(x^*)^\top (x - x^*) + \frac{p-1}{p 2^p} \norm{\ma (x - x^*)}_p^p \leq \norm{\ma x - b}_p^p.
\]
As $\grad f(x^*)^\top (x - x^*) = 0$ by optimality of $x^*$, we obtain 
\[
\frac{p-1}{p 2^p} \norm{\ma (x - x^*)}_p^p \leq  \norm{\ma x - b}_p^p - 
\norm{\ma x^* - b}_p^p = f -f^* \le \epsilon. %
\]
Now using $\norm{\cdot}_2 \leq n^{\frac{p-2}{2p}} \norm{\cdot}_p$ this implies 
\[
\norm{x - x^*}_{\mm}^p \leq 2^{p+1} n^{\frac{p-2}{2}} \epsilon.
\]
as $\frac{p}{p-1} \leq 2$ for $p \geq 3$. 
\end{proof}

\lpprogress*
\begin{proof}%
	We apply Algorithm~\ref{alg:ms-bacon} to 
	compute an $\epsilon_k=2^{-p}\epsilon_{k-1}$ approximate minimizer 
	of $f$ in 
	\begin{equation*}
	O\left((RM)^{2/3} \log^3 \left(\frac{LR^2}{\epsilon_{k}}(1+MR)\right) \log 
	\left(
	\frac{\epsilon_{k-1}}{\epsilon_{k}}\right)\right)
	\end{equation*}
	linear system solutions, with parameters $R,M$ and $L$ that 
	we bound as follows.
	
	By Lemma~\ref{lem:lp_dist}, 
	\[
	\norm{x_{k-1} 
	- x^*}_\mm^p \leq  2^{p+1} n^{\frac{p-2}{2}} 
	\epsilon_{k-1}  \defeq R^p
	\]
	 We add $\frac{\epsilon_{k}}{55 R^2} 
	\norm{x 
	- 
	x_k}_\mm^2$ to $f$ in obtaining $\tilde{f}$, and observe that the proof 
	of Corollary \ref{corr:citethis} only requires us to show that $\tilde{f}$ is 
	QSC. 
	By Lemma~\ref{lem:lp_qsc}, we see that $\tilde{f}$ is $M = O\left( p 
	\left(R^2/\epsilon_{k}\right)^{1/(p-2)}\right)$-QSC.  Therefore,  
	for any 
	$p \geq 3$ we have 
	\[
	RM = O\left(p R^{\frac{p}{p-2}} \epsilon_{k}^{-\frac{1}{p-2}}\right) = 
	O\Bigg(p \sqrt{n} 
	\bigg(\frac{2^{p+1}\epsilon_{k-1}}{\epsilon_k}\bigg)^{\frac{1}{p-2}}\Bigg)
	 =
	O(p \sqrt{n}),
	\]
	so the polynomial term in the running time is at most 
	$O\left((RM)^{2/3}\right) = O\left(p^{2/3} n^{1/3}\right)$. We now 
	bound the logarithmic factors in the runtime. Observe that for any $x$ 
	output by our MS oracle implementation we have that $\norm{x - 
	x^*}_\mm \leq 2 \sqrt{3} 
	R$ (\Cref{lem:domain_bound} with $\sigma=\half$). As the Hessian of 
	$f$ is $\ma^\top \md \ma$ where $\md_{ii} = 
	p(p-1) \lvert \ma x - b \rvert_i^{p-2}$ we may upper bound the 
	smoothness of $f$ (w.r.t. $\norm{\cdot}_{\mm}$) at all points 
	encountered during the algorithm by 
	\[
	O \left( p^2 \max_{\norm{x - x^*}_\mm \leq 2 \sqrt{3} R} \norm{\ma x - 
	b}_\infty^{p-2} \right) .
	\] 
	For any $x$ such that $\norm{x - x^*}_\mm \leq 2 \sqrt{3} R$ we 
	have  $\norm{\ma x - b}_\infty \leq \norm{\ma x^* - b}_\infty + 
	\norm{\ma (x - x^*)}_\infty \leq  \norm{\ma x^* - b}_p + \norm{x - 
	x^*}_\mm \leq (f^*)^{1/p} + 2 \sqrt{3} R$. Using the assumption $f^* \le 
	\epsilon_{k-1}/\delta \le \delta^{-1}n^{-\frac{p-2}{2}} R^p$,  we may 
	upper bound $L$ as
	\[
	L = O\left(4^p p^2 
	\left(1+\delta^{-\frac{1}{p}}n^{-\frac{p-2}{2p}}\right)^{p-2} 
	R^{p-2}\right).
	\] 
	Recalling that $R^p = 2^{2p+1} n^{\frac{p-2}{2}} \epsilon_k$, we obtain
	\begin{flalign*}
	\frac{LR^2}{\epsilon_{k}}(1+MR) &= O\prn*{4^p p^2 
	\left(1+\delta^{-\frac{1}{p}}n^{-\frac{p-2}{2p}}\right)^{p-2} 
	\frac{R^p}{\epsilon_k}(1+p\sqrt{n})} \\ &
	=
	O\prn*{16^p \left(\sqrt{n}+(n/\delta)^{1/p} \right)^{p-2} p^3\sqrt{n}}
	=
	O\prn*{17^p \left(\sqrt{n}+(n/\delta)^{1/p} \right)^{p}}
	\end{flalign*}
	Taking a logarithm yields
	\[
	\log \left( \frac{LR^2}{\epsilon}(1+MR) \right)  \leq O\left(p\log n + \log 
	\frac{n}{\delta}\right) = O\left(p \log 
	\frac{n}{\delta}\right). 
	\]
	Finally since $\log(\epsilon_{k-1}/\epsilon_{k}) = p$, combining the 
	above 
	bounds with the running time of Corollary~\ref{corr:citethis} gives a 
	bound of $O(p^{14/3} n^{1/3} \log^3(n/\delta))$ linear system solves as 
	desired. 
\end{proof}

\section{Lower bound proofs}\label{sec:lb-appendix}

Throughout, recall the coordinate progress notation
\[
\prog(x) \defeq \min \{ i\in[d] \mid |x_j|  \le r~\mbox{for all}~j\ge i \}.
\]

\subsection{Proof of \Cref{lem:robust-zero-chain-prog-bound}}

\proglem*

Before giving the proof, we remark that a number of 
papers~\citep{woodworth2017lower,carmon2019lower_i,
	diakonikolas2018lower,bubeck2019complexity} contain proofs for 
	variations of this claim featuring some differences between the types of 
	oracles considered, which do not materially affect the argument. The 
	proofs in these papers are distinct, and vary in the dimensionality they 
	require. Our argument below uses a random orthogonal transformation 
	similarly to~\citet{woodworth2017lower,carmon2019lower_i}, but uses a 
	more careful union bound \eqref{eq:prog-ub-1}, similarly to 
	that of~\citet{diakonikolas2018lower}, which allows for a much shorter proof 
	and also obtains tighter dimension bounds as in 
	\citet{diakonikolas2018lower,bubeck2019complexity}.

\begin{proof}
	Let $u_1,\ldots,u_d$ be the columns of $\mU$. \cref{def:robust-chain} 
	directly suggests an $r$-local oracle for $f_\mU(x) 
	= f(\mU^\top x)$: at query point $\bx$ the oracle 
	returns $f_\mU^{\bx}:\R^d \to \R$ such that
	\begin{equation*}
	f_\mU^{\bx}(x) = f(\inner{u_1}{x}, \ldots, \inner{u_{\prog(\mU^\top 
	\bx)}}{x}, 
	0, 
	\ldots, 0).
	\end{equation*}
	The $r$-robust zero-chain definition implies that $\oracle(\bx) 
	= f_\mU^{\bx}$ is a valid response for an 
	$r$-local oracle for $f_\mU$. Moreover, the oracle answer to query 
	$x_i$ 
	only depends on the first $\prog(\mU^\top x_i)$ columns of $u$. Define
	\begin{equation*}
	\mprog_i \defeq \max_{j\le i} \prog\left(\mU^\top x_j\right)
	\end{equation*}
	to be the highest progress attained up to query $i$. With this notation, 
	we wish to show that 
	\[
	\P\prn[\Bigg]{ \bigcap_{i\le N}\{\mprog_i \le i\}} \ge 1-\delta. 
	\]
	Note that at round $i+1$ the algorithm could query 
	$x_{i+1} = R \cdot u_{\mprog_i}$ which would satisfy 
	$\mprog_{i+1}=\prog(\mU^\top 
	x_{i+1}) = 1+\mprog_i$. Therefore, it is possible to choose queries so 
	that $\prog(\mU^\top x_i) = \mprog_i = i$. However, any faster increase 
	in 
	$\mprog_i$ 
	is highly unlikely, because it would require attaining high inner product 
	with a direction $u_j$ for $j>\mprog_i$ about which we have very little 
	information when $d$ is sufficiently large. 
	
	To make this intuition rigorous, we apply the union bound to the failure 
	probability, giving
	\begin{equation}\label{eq:prog-ub-1}
	\P\prn[\Bigg]{\bigcup_{i\le N}\{\mprog_i > i\}} 
	=\P\prn[\Bigg]{\bigcup_{i\le N}\{\mprog_i > i\andd p_{i-1}<i\}} 
	\le \sum_{i=1}^N
	\P\prn*{\mprog_i > i\andd \mprog_{i-1} < i},
	\end{equation}
	with $\mprog_0 = 0$. 
	We further upper bound each summand as
	\begin{equation}\label{eq:prog-ub-2}
	\P\prn*{\mprog_i > i, \mprog_{i-1} < i}
	= \P \prn[\Bigg]{\bigcup_{j\ge i}\crl[\Big]{\abs{\inner{u_j}{x_i}} > r\andd
			\mprog_{i-1}< 
			i} } \le (d-i+1) \cdot \P \prn[\bigg]{ \abs{\inner{u_i}{x_i}} > 
		r\andd\mprog_{i-1} < i},
	\end{equation}
	where the last step uses a union bound and the exchangeablility of 
	$u_{i},u_{i+1},\ldots,u_d$ under the event $\mprog_{i-1} 
	< i$. Note that the event $\mprog_{i-1} 
	< i$ implies that that $x_i$ depends on $\mU$ only through 
	$\mU\ind{<i}\defeq u_1, 
	\ldots, u_{i-1}$, as these vectors allow us to compute the oracle 
	responses to queries $x_1, \ldots, x_{i-1}$.\footnote{Applying Yao's 
		minimax principle~\citep{yao1977probabilistic} we implicitly 
		condition our 
		proof on the random coin tosses of $\alg$, which is tantamount to 
		assuming without loss of generality that $\alg$ is deterministic.} 
	Formally, we may write
	\begin{equation*}
	x_i = \algis(\mU\ind{<i}) \indic{\mprog_{i-1}<i} + 
	\algif(\mU)\indic{\mprog_{i-1}\ge i},
	\end{equation*}
	for two measurable functions $\algis : \R^{d \times (i-1)}\to\R^d$ and 
	$\algif: \R^{d\times d}\to \R^d$. Consequently, we have
	\begin{equation*}
	\P \prn[\bigg]{ \abs{\inner{u_i}{x_i}} > r\andd\mprog_{i-1} < i}
	=
	\P \prn[\bigg]{ \abs{\inner{u_i}{\algis(\mU\ind{<i})}} > 
		r\andd\mprog_{i-1} < i}
	\le 
	\P \prn[\bigg]{ \abs{\inner{u_i}{\algis(\mU\ind{<i})}} > r}.
	\end{equation*}
	Conditional on $\mU\ind{<i}$, the vector $u_i$ is uniformly 
	distributed in the $(d-i+1)$-dimensional space 
	$\mathrm{span}\{u_i,\ldots,u_d\}$. Therefore, standard 
	concentration inequalities on the 
	sphere~\citep[see][Lecture 8]{ball1997elementary} give
	\begin{equation*}
	\P \prn[\big]{ \abs{\inner{u_i}{\algis(\mU\ind{<i})}} > r \,\big\vert\, 
		\mU\ind{<i}} \le 2 
		\exp\crl*{-\frac{r^2}{2\norm{\algis(\mU\ind{<i})}^2}\cdot 
		(d-i+1)} \le \frac{\delta}{d^2},
	\end{equation*}
	where in the final step we substituted $\norm{\algis(\mU\ind{<i})}\le R$, and 
	our setting of $d$, which implies
	\begin{equation*}
	d-i+1 \ge d-N \ge \frac{20R^2}{r^2} \log \frac{20NR^2}{\delta \cdot 
		r^2} \ge 
	\frac{2R^2}{r^2} \log \frac{2d^2}{\delta}.
	\end{equation*}
	Substituting $ \P \prn*{ \abs{\inner{u_i}{x_i}} > r\andd\mprog_{i-1} < i} 
	\le \frac{\delta}{d^2}$ 
	into the bounds~\eqref{eq:prog-ub-1} 
	and~\eqref{eq:prog-ub-2} concludes the proof. 
\end{proof}

\subsection{Proof of \Cref{lem:nemirovski-func}}
\nemifunc*
\begin{proof}
	To prove the first part, fix $\bx$,  $x\in\ballset$ and  $j > \prog(\bx)$. 
	We have
	for all 
	$x\in\ballset$ that $|x_k - \bx_k| \le r$ for all $k\in[d]$, and 
	therefore
	\begin{equation*}
	x_j - 4r \cdot j \stackrel{(i)}{\le} \bx_j +r - 4r \cdot j \stackrel{(ii)}{\le} 
	\bx_{\prog(\bx)} +3r - 4 r \cdot j \stackrel{(iii)}{\le} 
	x_{\prog(\bx)} +4r - 4r\cdot j \stackrel{(iv)}{\le} 
	x_{\prog(\bx)} - 4r\cdot \prog(\bx).
	\end{equation*}
	Transitions $(i)$ and $(iii)$ above are due to $\norm{x-\bx}\le r$; 
	transition $(ii)$ is due to the definition~\eqref{eq:prog-def} of $\prog$, 
	which implies $|\bx_{\prog(\bx)}| \le r$ and $|\bx_j|\le r$; and $(iv)$ is 
	due to $j>\prog(\bx)$. Consequently, we have 
	\[
	f_{N,r}(x) = \max_{i\in[\prog(\bx)]} \{x_i - 4r\cdot i\}~~\mbox{for 
		all}~~x\in\ballset.
	\]
	Similarly, we can use $|\bx_{\prog(\bx)}| \le r$ and $j > \prog(\bx)$ to 
	conclude that
	\begin{equation*}
	0-4r\cdot j \le x_{\prog(\bx)} + r -4r\cdot j \le x_{\prog(\bx)} - 4r\cdot 
	\prog(\bx),
	\end{equation*}
	which means that $f_{N,r}(x) = \max_{i\in[\prog(\bx)]} \{x_i - 4r\cdot 
	i\}=f_{N,r}(x_1,\ldots,x_{\prog(\bx)}, 0, \ldots, 0)$, giving the robust 
	zero-chain property.
	
	The second property is well-known~\citep[see, 
	e.g.,][]{bubeck2019complexity}, but 
	we show it here for 
	completeness. Consider the point $\tilde{x}=-\frac{R}{\sqrt{N}}\ones \in 
	\domain$. Clearly, $\inf_{z\in \domain}f_{N,r}(z) \le f_{N,r}(\tilde{x}) = 
	-\frac{R}{\sqrt{N}}-4r$. Moreover, for any $x$ with $\prog(x) \le N$ we 
	have $f_{N,r}(x) \ge x_N - 4Nr \ge -(4N+1)r$. Combining these two 
	bounds yields $f_{N,r}(x) - \inf_{z\in 
		\domain}f_{N,r}(z)\ge 
	\frac{R}{\sqrt{N}} - (4N-3)r\ge \frac{R}{\sqrt{N}} - 4Nr$ as required.
	
	The final property follows from the fact that maximization preserves 
	convexity and Lipschitz constants.
\end{proof}

 \end{document}